\theoremstyle{plain}
\newtheorem{theorem}{Theorem}[section]
\newtheorem{lemma}[theorem]{Lemma}
\newtheorem{proposition}[theorem]{Proposition}
\newtheorem{prop}[theorem]{Proposition}
\newtheorem{corollary}[theorem]{Corollary}
\newtheorem{cor}[theorem]{Corollary}
\theoremstyle{definition}
\newtheorem{definition}[theorem]{Definition}
\theoremstyle{remark}
\newtheorem{remark}[theorem]{Remark}
\theoremstyle{plain}
\newtheorem{thma}{Theorem}
\newtheorem{thmb}[thma]{Theorem}
\def\Q{{\bf Q}}
\def\Z{{\bf Z}}
\def\C{{\bf C}}
\def\R{{\bf R}}
\def\O{{\mathcal{O}}}
\def\Am{{\mathcal{A}}}
\def\Ff{{\mathcal{F}}}
\def\A{{\bf A}}
\def\Af{{\bf A}_{f}}
\def\Eis{\operatorname{_cEis}}
\def\epsilon{\varepsilon}
\def\GSp{{\mathrm{GSp}}}
\def\GL{\mathrm{GL}}
\def\Gm{\mathbb{G}_{\rm m}}
\def\et{{\rm \acute{e}t}}
\title{Norm-compatible systems of cohomology classes for $\operatorname{GU(2,2)}$}
\author{Antonio Cauchi }
\thanks{* The author was supported by the Engineering and Physical Sciences Research Council [EP/L015234/1]. The EPSRC Centre for Doctoral Training in Geometry and Number Theory (The London School of Geometry and Number Theory), University College London.}
\def\@tocline#1#2#3#4#5#6#7{\relax
  \ifnum #1>\c@tocdepth 
  \else
    \par \addpenalty\@secpenalty\addvspace{#2}%
    \begingroup \hyphenpenalty\@M
    \@ifempty{#4}{%
      \@tempdima\csname r@tocindent\number#1\endcsname\relax
    }{%
      \@tempdima#4\relax
    }%
    \parindent\z@ \leftskip#3\relax \advance\leftskip\@tempdima\relax
    \rightskip\@pnumwidth plus4em \parfillskip-\@pnumwidth
    #5\leavevmode\hskip-\@tempdima
      \ifcase #1
       \or\or \hskip 1em \or \hskip 2em \else \hskip 3em \fi%
      #6\nobreak\relax
    \dotfill\hbox to\@pnumwidth{\@tocpagenum{#7}}\par
    \nobreak
    \endgroup
  \fi}
\begin{document}

\begin{abstract} 
We describe work of Faltings on the construction of  \'etale  cohomology classes associated to symplectic Shimura varieties and show that they satisfy certain trace compatibilities similar to the ones of Siegel units in the modular curve case. Starting from those, we construct a two variable family of trace compatible classes in the cohomology of a unitary Shimura variety. \end{abstract}

\maketitle
\selectlanguage{english}

\setcounter{tocdepth}{1}
\tableofcontents
\section{Introduction}

\subsection{Motivation.} The explicit construction of motivic and \'etale cohomology classes in the cohomology of modular curves has offered a beautiful approach to study cases of the Birch and Swinnerton-Dyer conjecture (e.g. \cite{kolyvagin}, \cite{kato}, \cite{BDR2}, \cite{KLZ}, or \cite{DRdiagonal}), the Iwasawa Main Conjecture for modular forms (e.g. \cite{kato}, \cite{KLZ}), and it constitutes one of the main tools to study the arithmetic of Galois representations appearing in the \'etale cohomology of Shimura varieties and their relation to special $L$-values. Siegel units and their higher weight analogues, so-called Eisenstein classes, are a fascinating source of such cohomology classes. For instance, these motivic objects and their \'etale incarnation crucially appear in the constructions of Kato's Euler System (\cite{kato}), Beilinson-Flach Euler system (\cite{LLZ1}), and the Euler system for $\GSp_4$ (\cite{LSZ1}). \\  

One of the key features of Eisenstein classes is their $p$-adic variation properties, which allows us to construct integral \'etale cohomology classes, and it is used to vary the above mentioned Euler systems in Hida families. This is indeed an essential ingredient in the proof of an explicit reciprocity law in the Beilinson-Flach case (\cite{KLZ}). The theory of $p$-adic variation of Eisenstein classes was initiated by Kings in \cite{ellipticsoule}, who systematically studied the construction of $\Lambda$-adic Eisenstein classes, that are \'etale cohomology classes with coefficients in sheaves of Iwasawa algebras which interpolate the \'etale Eisenstein classes. Despite the substantial difference between the construction of Siegel units and the higher weight motivic Eisenstein classes, Kings showed how $\Lambda$-adic Eisenstein classes can be reconstructed from the \'etale realisation of Siegel units, using Soul\'e's twisting construction previously developed in the cyclotomic case. This phenomenon also appears in the construction of Kato's Euler system. \newline 

It is natural to ask whether constructions of Eisenstein classes for abelian schemes enjoy similar properties and have interesting arithmetic applications. At present, little is known in this direction. 
In \cite{padicinterpolation}, Kings carried out the study of the $p$-adic variation of the \'etale Eisenstein classes associated to general commutative group schemes and constructed $\Lambda$-Eisenstein classes in this setting. On the other hand, in \cite{Faltings2005} Faltings constructed motivic and \'etale cohomology classes associated to abelian schemes, and he explored some of their properties in the case where the abelian scheme is the universal abelian scheme associated to the Shimura variety of $\GSp_4$.

\subsection{Main results.} In this article, we study $p$-adic interpolation properties of the \'etale Eisenstein classes defined by Faltings (\cite{Faltings2005}). \newline Let $Y_{\GSp_{2g}}(U)_{/\Q}$ be the $\GSp_{2g}$ Shimura variety of level $U$, which is a moduli space of principally polarised abelian schemes of relative dimension $g$ and  level structure U. For every integer $N > 1$, consider a sufficiently small open compact subgroup $U_{N} \subset \GSp_{2g}(\widehat{\Z})$ whose component at each prime $\ell \mid N$ is given by \[U_1(\ell^{v_\ell(N)}):=\{ M \in  \GSp_{2g}(\Z_\ell) | R_{2g}(M) \equiv (0, \cdots, 0,1) \text{ mod }\ell^{v_\ell(N)} \}, \]  where $R_i(M)$ denotes the $i$-th row of $M$. Fix a prime $p$. We use the construction of \'etale Eisenstein classes in \cite{Faltings2005} to define \[ \Eis_{m,N}^g \in H^{2g-1}_{\et}(Y_{\GSp_{2g}}(U_{N}),\Z_{p}(g)),\] depending on the choice of auxiliary integers $c$ and $m$ coprime with $p$, and we show the following trace compatibility relations.  
\begin{thma}[Proposition \ref{tracecompfalt}]\label{a} Let $\ell,N$ be coprime with $c$, and let $\operatorname{Tr}_{\pi_\ell}$ denote the trace map associated to the natural degeneracy map \[ \pi_\ell:Y_{\GSp_{2g}}(U_{N\ell}) \longrightarrow Y_{\GSp_{2g}}(U_{N}). \] We have \[\operatorname{Tr}_{\pi_\ell}(\Eis_{m,N\ell}^g) = \begin{cases} \Eis_{m,N}^g  & \text{ if }\ell\mid N; \\ 
\Eis_{m,N}^g- (\begin{smallmatrix} r I & 0 \\ 0 & r I \end{smallmatrix})^*(\Eis_{m,N}^g) & \text{ if } \ell \nmid N, \end{cases}\]
where $r$ denotes the inverse of $N$ modulo $\ell$.
\end{thma}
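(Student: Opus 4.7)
The strategy I would follow mirrors the norm-compatibility argument for Siegel units on modular curves and its higher-dimensional incarnation in Kings' work on $\Lambda$-adic Eisenstein classes: realise $\Eis_{m,N}^g$ as the pullback along a tautological torsion section of a universal Eisenstein class living on the universal abelian scheme, and reduce the trace relation to a distribution relation for that universal class. Concretely, I would write $\Eis_{m,N}^g = s_N^{\ast}(\Ee)$, where $\Ee$ denotes Faltings' \'etale Eisenstein class (with parameters $m$, $c$) on the universal abelian scheme $\Am_N \to Y_{\GSp_{2g}}(U_N)$, and $s_N$ is the torsion section cut out by the last-row condition defining $U_N$.

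Next I would form the cartesian square
\[\xymatrix{\Am_{N\ell} \ar[r]^{\tilde\pi_\ell} \ar[d] & \Am_N \ar[d] \\ Y_{\GSp_{2g}}(U_{N\ell}) \ar[r]^{\pi_\ell} & Y_{\GSp_{2g}}(U_N)}\]
and use proper base change together with the compatibility of Gysin maps with pullback by sections to rewrite
\[ \operatorname{Tr}_{\pi_\ell}(\Eis_{m,N\ell}^g) = s_N^{\ast}\bigl(\tilde\pi_{\ell,\ast}(\Ee)\bigr).\]
The problem is now reduced to analysing the pushforward of the universal Faltings class along the degeneracy of abelian schemes, for which one invokes the trace-compatibility relation that $\Ee$ satisfies along isogenies (this is the only input from Faltings' construction that genuinely uses the abelian scheme structure, and it is the analogue of the distribution relation for Kato's $_c\theta$).

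The two cases of the statement then correspond to two distinct fibre behaviours. When $\ell \mid N$, the level-$\ell$ data at level $N\ell$ refines what is already present at level $N$, and the tautological $(N\ell)$-section sits directly above the tautological $N$-section; the distribution relation therefore produces $\Eis_{m,N}^g$ with no correction. When $\ell \nmid N$, the Chinese Remainder isomorphism $\Z/N\ell\Z \cong \Z/N\Z \times \Z/\ell\Z$ causes $\tilde\pi_{\ell,\ast}(\Ee)$, evaluated at $s_N$, to split into two contributions on $\Am_N$: the tautological $s_N$-pullback, giving $\Eis_{m,N}^g$, and a second contribution arising from the section obtained by shifting the $\ell$-component. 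The integer $r = N^{-1}\pmod\ell$ is precisely the factor needed to identify this shifted section with the pullback of $s_N$ along the scalar automorphism $\matrix{rI}{0}{0}{rI}$ of the Shimura variety, producing the correction term in the theorem.

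The main obstacle I anticipate is twofold: first, extracting cleanly from Faltings' construction a distribution relation for the universal class $\Ee$ along the relevant morphism of abelian schemes (this is where the auxiliary integer $c$ coprime to $N\ell$ plays a role, ensuring no denominators appear); and second, carefully tracking the full $\GSp_{2g}$ level structure through the pushforward in the $\ell\nmid N$ case so that the correction operator is identified with the \emph{diagonal} scalar $\matrix{rI}{0}{0}{rI}$ rather than a scalar acting only on a distinguished coordinate of the symplectic basis. The similitude factor of $\GSp_{2g}$ plays a nontrivial role here, since multiplication by $r$ on the polarised abelian scheme changes the polarisation by $r^2$ and must be compensated by the full diagonal scalar matrix acting on the level structure.
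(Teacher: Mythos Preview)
Your overall intuition is sound, but the displayed Cartesian square and the formula you derive from it are not the ones that do the work. The square
\[\xymatrix{\Am_{N\ell} \ar[r]^{\tilde\pi_\ell} \ar[d] & \Am_N \ar[d] \\ Y_{\GSp_{2g}}(U_{N\ell}) \ar[r]^{\pi_\ell} & Y_{\GSp_{2g}}(U_N)}\]
is indeed Cartesian, but it does not involve the sections $s_{N\ell}$, $s_N$ at all, so proper base change for it cannot produce the identity $\operatorname{Tr}_{\pi_\ell}\circ s_{N\ell}^{\ast}=s_N^{\ast}\circ\tilde\pi_{\ell,\ast}$. In fact the square obtained by inserting the sections,
\[\xymatrix{Y_{\GSp_{2g}}(U_{N\ell}) \ar[r]^-{s_{N\ell}} \ar[d]_{\pi_\ell} & \Am_{N\ell} \ar[d]^{\tilde\pi_\ell} \\ Y_{\GSp_{2g}}(U_N) \ar[r]^-{s_N} & \Am_N,}\]
does not even commute: $\tilde\pi_\ell\circ s_{N\ell}$ is a section of exact order $N\ell$ over $Y_{\GSp_{2g}}(U_{N\ell})$, whereas $s_N\circ\pi_\ell$ has order $N$. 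Consequently the formula $\operatorname{Tr}_{\pi_\ell}(\Eis_{m,N\ell}^g)=s_N^{\ast}(\tilde\pi_{\ell,\ast}(\Ee))$ is false as written, and the map $\tilde\pi_\ell$ is not an isogeny, so there is no distribution relation for it.

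The fix is to replace $\tilde\pi_\ell$ on the right by multiplication by $\ell$ on $\Am_N$. Since $\pi_\ell$ sends $(A,\lambda,\alpha_D,P)$ to $(A,\lambda,\alpha_D,[\ell]P)$, one has $[\ell]\circ(\tilde\pi_\ell\circ s_{N\ell})=s_N\circ\pi_\ell$, and the paper proves (Lemma~\ref{caseatp}) that for $\ell\mid N$ the square
\[\xymatrix{Y_{\GSp_{2g}}(U_{N\ell}) \ar[rr]^-{\tilde\pi_\ell\circ s_{N\ell}} \ar[d]_{\pi_\ell} & & \Am_N \ar[d]^{[\ell]} \\ Y_{\GSp_{2g}}(U_N) \ar[rr]^-{s_N} & & \Am_N}\]
is Cartesian, while for $\ell\nmid N$ the fibre product over $s_N$ and $[\ell]$ is $Y_{\GSp_{2g}}(U_N)\sqcup Y_{\GSp_{2g}}(U_{N\ell})$. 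Base change in these squares, combined with base-change invariance of $\Ee$ (Proposition~\ref{basechrel}) and the genuine trace relation $\operatorname{Tr}_{[\ell]}(\Ee)=\Ee$ (Proposition~\ref{tracerel}), immediately gives the two cases of the theorem; the extra $Y_{\GSp_{2g}}(U_N)$ component in the $\ell\nmid N$ case, with section $r\cdot s_N$, is exactly what produces the correction term $(\begin{smallmatrix} rI & 0 \\ 0 & rI\end{smallmatrix})^{\ast}\Eis_{m,N}^g$. Your description of the two fibre behaviours and of the role of $r$ is correct; it is only the intermediate Cartesian square and the map along which one takes the trace that need to be corrected.
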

We use a method similar to the one used by Scholl in the proof of the $g=1$ case (\cite{scholl}). \\

As an application, we define classes in the cohomology of the unitary Shimura variety $Y_{\operatorname{GU}(2,2)}$ by pushing forward Faltings' classes associated to $Y_{\GSp_{4}}$ along (twisted) closed immersions \[ Y_{\GSp_{4}}(uV_{N,M}u^{-1} \cap \GSp_{4}) \hookrightarrow Y_{\operatorname{GU}(2,2)}(V_{N,M}), \] for suitable choices of $u \in \operatorname{GU(2,2)}(\Af)$ and level structure $V_{N,M} \subset \operatorname{GU(2,2)}(\widehat{\Z})$, whose component at each $\ell \mid N$ has pullback to $\GSp_4(\Z_\ell)$ contained in (and equal to if $\ell \nmid M$) $U_1(\ell^{v_\ell(N)})$.  We show that the obtained element \[{_c\mathscr{Z}_{N,M,u}} \in H^{5}_{\et}(Y_{\operatorname{GU}(2,2)}(V_{N,M}),\Z_{p}(3))\] satisfies the following (Proposition \ref{klingenunitary} and Theorem \ref{finalcongruences}).\\  

\begin{thmb}\label{b}
 Let $N$ be coprime with $c$. \begin{enumerate} 
\item Let $\pi_p: Y_{\operatorname{GU}(2,2)}(V_{Np,M}) \longrightarrow  Y_{\operatorname{GU}(2,2)}(V_{N,M})$ be the natural degeneracy map, then 
\[ \operatorname{Tr}_{\pi_p}({}_c\mathcal{Z}_{Np,M,u})=\begin{cases}{}_c\mathcal{Z}_{N,M,u}  & \text{ if }p\mid N; \\ 
\left( id - (\begin{smallmatrix} r I & 0 \\ 0 & r I \end{smallmatrix})^* \right){}_c\mathcal{Z}_{N,M,u} & \text{ if }p \nmid N,M  \end{cases}\]
 where $r$ denotes the inverse of $N$ in $(\Z/p\Z)^*$.
\item Let $p\nmid N,M$ and let $n,m$ be integers such that $m\geq 1$ and $n\geq 3m+3$. Let $\operatorname{Tr}_{f_p}$ be the trace map of the degeneracy map \[f_p:Y_{\operatorname{GU}(2,2)}(V_{Np^n,Mp^{m+1}}) \longrightarrow Y_{\operatorname{GU}(2,2)}(V_{Np^n,Mp^{m}})\] induced by right multiplication by $\operatorname{diag}(p^{-3},p^{-2},p^{-1},1)\in \operatorname{GU}(2,2)(\Q_p)$. There exists $u \in \operatorname{GU}(2,2)(\Af)$ such that 
\[ \operatorname{Tr}_{f_p}({}_c\mathcal{Z}_{Np^n,Mp^{m+1},u})=\mathcal{U}_p\cdot {}_c\mathcal{Z}_{Np^n,Mp^m,u},\]
where $\mathcal{U}_p$ is the Hecke operator of $\operatorname{diag}(p^{-3},p^{-2},p^{-1},1) \in \operatorname{GU}(2,2)(\Q_p)$.
\end{enumerate}
\end{thmb}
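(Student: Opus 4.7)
The plan is to reduce both parts to Theorem~A via the functoriality of the pushforward along the closed immersions $\iota_u$ used to define ${}_c\mathcal{Z}_{N,M,u}$.

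For part (1), I would first establish that the degeneracy map $\pi_p$ on $Y_{\operatorname{GU}(2,2)}$ restricts compatibly to a degeneracy map $\pi'_p$ on the embedded $\GSp_4$-subvariety. Writing $H_N := uV_{N,M}u^{-1}\cap\GSp_4$ and $H_{Np}:= uV_{Np,M}u^{-1}\cap\GSp_4$, I would check at the level of ad\`elic double cosets that the square with vertical arrows $\iota_{Np,M,u}$ and $\iota_{N,M,u}$ and horizontal arrows $\pi_p, \pi'_p$ is Cartesian. This is an elementary verification, using that $V_{Np,M}$ has $\GSp_4$-component contained in $U_1(\ell^{v_\ell(Np)})$ at $\ell\mid Np$, and similarly at the lower level. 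Once the Cartesianity is in hand, proper base change gives
\[ \operatorname{Tr}_{\pi_p}\circ (\iota_{Np,M,u})_* = (\iota_{N,M,u})_* \circ \operatorname{Tr}_{\pi'_p}, \]
and applying Theorem~A (Proposition~\ref{tracecompfalt}) to $\Eis_{m,Np}^{2}$ yields the two cases of the statement. In the case $p\nmid N,M$, the extra correction term is transported through $(\iota_{N,M,u})_*$ by observing that the diagonal element $\matrix{rI}{0}{0}{rI}$ normalises the $\GSp_4$-subgroup and commutes with the embedding up to a harmless relabelling of the twisting parameter $u$.

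For part (2), the strategy is a coset decomposition at $p$. Writing $t_p = \operatorname{diag}(p^{-3},p^{-2},p^{-1},1)$ and $V=V_{Np^n,Mp^m}$, the map $f_p$ is right translation by $t_p$, and its fiber is parametrised by a finite set of coset representatives $\{\gamma_i\}$ of $(t_p^{-1}V t_p)\cap V \backslash V$. For each $\gamma_i$, the preimage of the $\GSp_4$-subvariety under the corresponding branch pulls back to a new embedded $\GSp_4$-subvariety twisted by $\gamma_i u$; by the same base change argument used in part (1), the trace decomposes as
\[ \operatorname{Tr}_{f_p}({}_c\mathcal{Z}_{Np^n,Mp^{m+1},u}) = \sum_i (\iota_{\gamma_i u})_* \, \operatorname{Tr}_{\alpha_i}(\Eis_{m,Np^n}^{2}), \]
where the $\alpha_i$ are induced degeneracy maps on $\GSp_4$-Shimura varieties. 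The key step is then to choose the auxiliary element $u\in \operatorname{GU}(2,2)(\Af)$, supported at $p$, so that the set $\{\gamma_i u\}$ modulo $V$ recovers precisely the single double coset $V \operatorname{diag}(p^{-3},p^{-2},p^{-1},1) V$ defining $\mathcal{U}_p$; rearranging the sum gives the desired identity with $\mathcal{U}_p\cdot {}_c\mathcal{Z}_{Np^n,Mp^m,u}$ on the right-hand side.

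The main obstacle is the bookkeeping in part (2). The condition $n\geq 3m+3$ reflects the fact that the exponents $\{0,1,2,3\}$ appearing in $t_p$ shift the level at $p$ by at most three, and one needs the $\GSp_4$-level associated to each $\gamma_i u$ to remain sufficiently deep so that Theorem~A applies only in the case $\ell\mid N$, avoiding the spurious correction term $\matrix{rI}{0}{0}{rI}^*$ that would otherwise appear. Verifying that the $\GSp_4$-intersection $(\gamma_i u)V(\gamma_i u)^{-1}\cap\GSp_4$ has exactly the predicted level structure for every $i$, and that the map from $\{\gamma_i\}$ to representatives of the $\mathcal{U}_p$ double coset is a bijection, is the technical heart of the argument.
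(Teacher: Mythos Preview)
Your approach to part~(1) is essentially the paper's, though you overcomplicate it: you do not need the square to be Cartesian, nor do you need proper base change. Mere commutativity of
\[
\xymatrix{
Y_H(H\cap uV_{Np,M}u^{-1}) \ar[r]^-{\iota_{Np,M,u}} \ar[d]_{\pi'_p} & Y_G(V_{Np,M}) \ar[d]^{\pi_p} \\
Y_H(H\cap uV_{N,M}u^{-1}) \ar[r]^-{\iota_{N,M,u}} & Y_G(V_{N,M})
}
\]
already gives $(\pi_p)_*\circ(\iota_{Np,M,u})_*=(\iota_{N,M,u})_*\circ(\pi'_p)_*$ by functoriality of pushforward, and then Theorem~A finishes the job. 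This is exactly Corollary~\ref{rel1g} in the paper.

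For part~(2) your strategy diverges from the paper's and, as written, has a real gap. The paper does \emph{not} expand $\operatorname{Tr}_{f_p}$ as a sum over coset representatives $\gamma_i$ and then hope to match $\{\gamma_i u\}$ with the $\mathcal{U}_p$ double coset. Instead it factors $f_p$ as $\eta_p\circ pr$, where $pr:Y_G(V_{Np^n,p^{m+1}})\to Y_G(V'_{Np^n,p^m})$ is the natural projection and $\eta_p=\operatorname{diag}(p^3,p^2,p,1)$, and establishes (Proposition~\ref{propcartesian}) that for a \emph{specific} $u$ the square
\[
\xymatrix{
Y_H(uV_{Np^n,p^{m+1}}u^{-1}\cap H) \ar[rr]^-{pr\circ\phi^u} \ar[d]_{\pi} && Y_G(V'_{Np^n,p^m}) \ar[d]^{\bar{pr}} \\
Y_H(uV_{Np^n,p^m}u^{-1}\cap H) \ar[rr]^-{\phi^u} && Y_G(V_{Np^n,p^m})
}
\]
is Cartesian. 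The two conditions checked are (i) $uV_{Np^n,p^{m+1}}u^{-1}\cap H = uV'_{Np^n,p^m}u^{-1}\cap H$, so that the top map is still a closed immersion, and (ii) the vertical degrees agree. Both are verified by explicit matrix computations in $G(\Z_p)$, separately for $p$ split and $p$ inert, with concrete choices of $u$ (e.g.\ $u=\big(\begin{smallmatrix}1&0&-1&0\\0&1&0&0\\0&0&1&0\\0&0&0&1\end{smallmatrix}\big)$ in the split case). Base change in this Cartesian square then gives $\operatorname{Tr}_{pr}({}_c\mathcal{Z}_{Np^n,p^{m+1},u})=\bar{pr}^*({}_c\mathcal{Z}_{Np^n,p^m,u})$, and applying $\operatorname{Tr}_{\eta_p}$ produces $\mathcal{U}_p=\operatorname{Tr}_{\eta_p}\circ\bar{pr}^*$ on the nose.

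Your ``key step'' --- choosing $u$ so that $\{\gamma_i u\}$ mod $V$ recovers the $\mathcal{U}_p$ double coset --- is not a well-posed condition, and you give no mechanism for why such a $u$ should exist or how to find it. Also, your reading of the hypothesis $n\geq 3m+3$ is off: Theorem~A is not invoked at all in part~(2) (the Eisenstein class stays at fixed level $Np^n$ throughout, only its pullback to varying $H$-levels changes), so there is no ``spurious correction term'' to avoid. The inequality $n\geq 3m+3$ is what makes the explicit description of $V_{Np^n,p^m}$ at $p$ valid and is used in the index computations of the appendix.
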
 
In Theorem \ref{b}$(2)$, $p$ is either inert or split in the imaginary quadratic field which appears in the definition of $\operatorname{GU(2,2)}$. While Theorem \ref{b}(1) immediately follows from Theorem \ref{a}, the proof of Theorem \ref{b}(2) is more elaborate and it is based on ideas which have been employed in the proof of \cite[Theorem 5.4.1]{KLZ} for the \emph{vertical} Euler system norm relation of the Beilinson-Flach classes.\\ 

Along the way, we construct $\Lambda$-adic Eisenstein classes from Faltings' classes (Definition \ref{definitionlambdaadicclass}), and in Proposition \ref{comparisonfk} and Theorem \ref{secondcomparisonfk}, we compare them to the Eisenstein-Iwasawa classes introduced in \cite{padicinterpolation}.

\subsection{Structure of the paper.} In \S \ref{prelim}, we carry out the necessary preliminaries to show the push-forward relations of Faltings' classes; this involves studying in detail the tower of integral level structures corresponding to $\{ U_{N} \}_N$. In \S \ref{eisclassabschm}, we recall the definition of Eisenstein classes in both the elliptic and higher dimension case, and we discuss their basic properties, which should be well-known. We then use results from the previous section to deduce the compatibility of Faltings' classes in the mira-Klingen tower (\S \ref{miraklingen}). In \S \ref{iwtheisclass}, we discuss the relation between Faltings' classes and Kings' Eisenstein-Iwasawa classes.  Finally, after studying the embedding $\phi$ of $Y_{\GSp_4}$ into $Y_{\operatorname{GU}(2,2)}$  in \S \ref{genongu22shimvar}, we establish the desired compatibility relations of the push-forward of Faltings' classes along $\phi$ in \S \ref{twovarfamily}. 

\subsection{Acknowledgements.} First and foremost I would like to thank my supervisor Sarah Zerbes for suggesting the problem and for her continued support and guidance. I am also indebted to David Loeffler who, in several occasions, generously gave of his time to explain various aspects of the subject to me. I thank Chris Skinner and Guido Kings for numerous helpful discussions. Special thanks to Gregorio Baldi and Joaqu\'in Rodrigues Jacinto for their comments on the draft.  
Parts of this paper were written while I was visiting the Bernoulli Center at EPFL. I am very grateful to the Bernoulli Center and specially to Dimitar Jetchev for their hospitality and invitation to participate in part of the semester "Euler Systems and Special Values of L-functions".
 \section{Preliminaries}\label{prelim}
\subsection{Symplectic groups}

Denote by $I_g'$ the $g \times g$ anti-diagonal matrix with all entries 1 and $J=\left( \begin{smallmatrix}  & I_g' \\ -I'_g & \end{smallmatrix} \right)$. Let $\GSp_{2g}$ be the \[\GSp_{2g}(R) = \{ (h,m_h) \in (\GL_{2g} \times \GL_1)(R) : h^t J h=m_h J \}. \] Define the symplectic multiplier to be the homomorphism $$\mu : \GSp_{2g} \longrightarrow  \GL_1, \quad h \mapsto m_h.$$ It has kernel the symplectic group $\operatorname{Sp}_{2g}$.
 
\subsection{Symplectic Shimura varieties}

Let $\mathbb{S} = \mathrm{Res}_{\C / \R} \mathbb{G}_m$ and define $h \colon \mathbb{S} \to {\GSp_{2g}}_{/ \R}$ as follows. 
Let  $$ X := \left \{ M \in Sp_{2g}(\R) \quad  :  \quad
M^2=-I, \; \; 
\langle u , Mv \rangle := u^tJMv \text{ is} \pm\text{-definite } 
\right \} $$
be the set of positive or negative definite symplectic complex structures on the real vector space given by the standard representation of ${\GSp_{2g}}_{/ \R}$. The set $X$ can be identified with the set of homomorphisms \[h: \mathbb{S}  \longrightarrow {\GSp_{2g}}_{/\R},\] by sending $M \in X$ to $h$ such that $h(a+ib)=aI+bM.$ Every $\GSp_{2g}(\R)$-conjugacy class in $X$ defines a Shimura datum and, since two symplectic complex structures are $\GSp_{2g}(\R)$-conjugate, we conclude that $X$ consists of a single $\GSp_{2g}(\R)$-conjugacy class. 
The pair $(\GSp_{2g}, X)$ defines a Shimura datum with reflex field $\Q$. The attached Shimura variety is a moduli space of polarised abelian schemes with extra level structures, as we discuss in Subsection 1.3. Note that this Shimura datum naturally arises from a PEL datum of type C (e.g. see \cite{milneshim}, Definition 8.15 and Example 8.6).
\subsection{Moduli of abelian schemes for $\GSp_{2g}$ } 
We now recall the description of the moduli space interpretation of the Shimura varieties that will be used to study the trace compatibilities of the Eisenstein classes.

For any open compact subgroup $U$ of $\GSp_{2g}(\widehat{\Z})$, we can associate the set-valued functor $F_U$ from the category $Sch/\Q$ of schemes over $\Q$, which parametrises (isomorphism classes of) abelian schemes of relative dimension g with principal polarisation and $U$-level structure (whose definition is discussed later). Recall that if $U$ is neat (in the sense of \cite{pink}, p. 13), then $F_U$ is known to be representable by a smooth quasi-projective scheme $Y_{\GSp_{2g}}(U)$ over $\Q$ (for instance, see \cite{cptpel} Theorem 1.4.1.11). 
\begin{remark} Examples of neat subgroups are given by principal level subgroups \[K(N):= \{ h \in \GSp_{2g}(\widehat{\Z}): h \equiv I \mbox{ mod }N \}\] for an integer $N \geq 3$, and the open compact subgroups contained in it. We would like to remark that neatness ensures that the the only automorphism of each object in our moduli space is the identity. 
\end{remark}

\subsubsection{Integral symplectic level structures}
In the following, we review the definition of level structure for any open compact subgroup $U$ of $\GSp_{2g}(\widehat{\Z})$. We closely follow \cite{cptpel}.  We will denote these structures as symplectic level structures simply to remark that we are dealing with the symplectic group $\GSp_{2g}$. 
Consider an abelian scheme $A$ of relative dimension $g$ over a locally Noetherian $\Q$-scheme $S$ and fix a principal polarisation $\lambda$ on it; a "naive" candidate for the definition of a full level $N$-structure is the following.
\begin{definition}\label{def1} A naive symplectic full level $N$ structure on $(A,\lambda)_{/S}$ is an isomorphism $$\alpha_N:(\Z/N\Z)^{2g}_{/_S} \longrightarrow A[N],$$ which respects the symplectic forms defined by $J$ on $(\Z/N\Z)^{2g}$ and the one induced by the Weil pairing $e_{\lambda}$ and $\lambda$ on $A[N]$. 
\end{definition}

\begin{remark}
  The isomorphism $\alpha_N$ respects the two symplectic forms in the following sense. There exists an isomorphism $\beta_N:((\Z/N\Z)(1))_{/_S} \longrightarrow \mu_{N/_S}$ which makes the diagram 
  $$\xymatrix{(\Z/N\Z)^{2g}_{/_S}{\times}_S (\Z/N\Z)^{2g}_{/_S}\ar[rr]^{J} \ar[d]_{\alpha_N \times \alpha_N} & & ((\Z/N\Z)(1))_{/_S} \ar[d]^{\beta_N} \\ 
  A[N] {\times}_S A[N] \ar[rr]^{e_{\lambda}} & & \mu_{N/_S}
    }$$
    commutative.
 \end{remark}
For each geometric point $\bar{s}$ of $S$, define the Tate module (at $\bar{s}$) of $A$ to be the $\widehat{\Z}$-module $$T_{\bar{s}}(A) := \varprojlim_{N}A[N](\bar{s}).$$
Since $S$ is a $\Q$-scheme, $T_{\bar{s}}(A)$ is a free $\widehat{\Z}$-module of rank $2g$. We will see that there is another way to define symplectic full level $N$ structures, at the level of Tate modules (after passing to geometric fibres of $A/S$), which is equivalent to the one of Definition \ref{def1}, due to the following. 
\begin{lemma}\label{actionpi} Let $S$ be a connected locally Noetherian scheme and fix a geometric point $\bar{s}\to S$; there is an equivalence between the category of locally constant constructible \'etale abelian sheaves over $S$ and the one of finite continuous $\pi_1(S,\bar{s})$-modules, given by sending $G$ to its geometric fibre $G_{\bar{s}}$.
\end{lemma}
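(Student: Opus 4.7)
The plan is to reduce this to the standard main theorem of étale Galois theory (SGA 1, Exposé V), which asserts an equivalence between the category of finite étale covers of $S$ and the category of finite continuous $\pi_1(S,\bar{s})$-sets, via the fibre functor at $\bar{s}$. The task is really only to translate ``locally constant constructible étale abelian sheaves'' into the language of finite étale coverings, and then to check that the abelian group structure is preserved on both sides.

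First, I would recall that a locally constant constructible étale sheaf of sets $\mathcal{F}$ on $S$ is, by definition, a sheaf which becomes representable by a finite étale cover after an étale surjective base change; equivalently, the ``espace étalé'' $\underline{\mathcal{F}} \to S$ is itself finite étale. This identifies the category of locally constant constructible étale sheaves on $S$ with the category $\mathrm{F\acute{E}t}(S)$ of finite étale covers of $S$. Under this identification, the geometric fibre $\mathcal{F}_{\bar{s}}$ goes to the finite set $\mathrm{Hom}_S(\bar{s}, \underline{\mathcal{F}})$, which is precisely the fibre functor used in SGA 1.

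Next, since $S$ is connected and locally Noetherian, the main theorem of Grothendieck's Galois theory applies: the fibre functor at $\bar{s}$ induces an equivalence
\[
\mathrm{F\acute{E}t}(S) \xrightarrow{\sim} \bigl\{\text{finite continuous } \pi_1(S,\bar{s})\text{-sets}\bigr\}.
\]
Composing the two equivalences gives an equivalence between locally constant constructible étale sheaves of sets on $S$ and finite continuous $\pi_1(S,\bar{s})$-sets.

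Finally, to promote this to abelian sheaves, I would observe that both equivalences are compatible with fibre products and terminal objects, hence preserve abelian group objects. An abelian group structure on a locally constant constructible sheaf $G$ corresponds to an abelian group structure on the associated finite étale cover (compatible with the projection to $S$), which via the fibre functor transfers to an abelian group structure on $G_{\bar{s}}$ commuting with the $\pi_1$-action — that is, to the structure of a finite continuous $\pi_1(S,\bar{s})$-module. Conversely, any finite continuous $\pi_1$-module arises in this way by the essential surjectivity of the fibre functor on underlying sets. There is no real obstacle beyond this bookkeeping, since this statement is a standard consequence of SGA 1 and can also be quoted from \cite{milneshim} or similar references; the only point requiring a small amount of care is the identification of locally constant constructible sheaves with finite étale covers, which uses that constructibility together with local constancy forces the étale space to be finite étale rather than merely ind-étale.
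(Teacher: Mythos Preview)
Your proof is correct and is the standard argument via SGA~1. Note, however, that the paper does not actually supply a proof of this lemma: it is stated as a well-known fact and used immediately afterwards. Your write-up is therefore more than what the paper provides; if anything, it could be shortened to a one-line reference to SGA~1, Expos\'e~V (or to \cite[I, \S5]{milne1980etale}), since the passage from sheaves of sets to abelian sheaves via preservation of finite limits is routine.
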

In particular, since $A[N]$ is a locally constant constructible \'etale sheaf on $S$, $A[N](\bar{s})$ has the structure of a $\pi_1(S,\bar{s})$-module. The $\widehat{\Z}$-module $T_{\bar{s}}(A)$ acquires an action of $\pi_1(S,\bar{s})$ from the one of each group of $N$-torsion points.
In view of Lemma \ref{actionpi}, we can translate the information given by a naive symplectic full level $N$ structure $\alpha_N$ in terms of a $\pi_1(S,\bar{s})$-equivariant isomorphism at the geometric fibre, say ${\alpha_N}_{\bar{s}}$. Note that ${\alpha_N}_{\bar{s}}$ is the reduction modulo $K(N)$ of a symplectic isomorphism $$\alpha_{\bar{s}}:\widehat{\Z}^{2g}\longrightarrow T_{\bar{s}}(A),$$
and such a lift is unique up to action of $K(N)$. We assume that the element $h \in \GSp_{2g}(\widehat{\Z})$ acts on the isomorphism ${\alpha}_{\bar{s}}$ by $\alpha_{\bar{s}} \circ h$, while $\sigma \in \pi_1(S,\bar{s})$ acts on the left. Hence, we can give the following definition.
 \begin{definition}\label{def2} A symplectic full level $N$-structure on $(A,\lambda)_{/_S}$ (at $\bar{s}$) is a $\pi_1(S,\bar{s})$-invariant $K(N)$-orbit of a symplectic isomorphism $$\alpha_{\bar{s}}:\widehat{\Z}^{2g}\longrightarrow T_{\bar{s}}(A).$$ Thus, a symplectic level $N$-structure on $(A,\lambda)_{/_S}$ is a collection of symplectic full level $N$-structures at each geometric point $$\{\alpha_{\bar{s}}\}_{\bar{s}},$$ such that if two geometric points $\bar{s},\bar{r}$ are in the same connected component, then  \[ \alpha_{\bar{s}}=\alpha_{\bar{r}}.\]
\end{definition} 
 In Definition \ref{def2}, the $\pi_1(S,\bar{s})$-invariance of the $K(N)$-orbit of $\alpha_{\bar{s}}$ is equivalent to asking the symplectic full level $N$ structure in the sense of Definition \ref{def1} to be defined over $S$ and hence it is an essential ingredient to compare the two definitions, as the next proposition shows.
\begin{proposition}[\cite{cptpel} 1.3.6.5-1.3.6.6]
Let $(A,\lambda)_{/_S}$ be as above; a symplectic level $N$-structure on $(A,\lambda)_{/_S}$ is equivalent to a tower $$(t_M:S_M \longrightarrow S)_{N|M}$$ of finite \'etale surjective maps such that: \begin{enumerate}
\item $S_N=S$ and for any $N|M|L$ there are finite \'etale surjective maps $$g_{L,M}:S_L \longrightarrow S_M $$ such that $t_L=t_M \circ g_{L,M}$;
\item Over each $S_M$, we have a symplectic isomorphism $$\alpha_{/_{S_M}}:(\Z/M\Z)^{2g}_{/_{S_M}} \longrightarrow A[M]_{/_{S_M}},$$ such that, if $N|L|M$, the pullback of $\alpha_{/_{S_L}}$ under $g_{M,L}$ is the reduction modulo $L$ of $\alpha_{/_{S_M}}$.
\end{enumerate}

\end{proposition}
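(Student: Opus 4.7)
The plan is to establish the equivalence in both directions, using Lemma \ref{actionpi} as the dictionary between finite \'etale covers of $S$ and finite continuous $\pi_1(S,\bar{s})$-sets. After reducing to the case $S$ connected (both sides of the equivalence being defined component by component), I fix a geometric point $\bar{s}\to S$.

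Going from the $K(N)$-orbit to a tower: given a $\pi_1(S,\bar{s})$-invariant $K(N)$-orbit of a symplectic isomorphism $\alpha_{\bar{s}}:\widehat{\Z}^{2g}\to T_{\bar{s}}(A)$, for each $N\mid M$ the reduction modulo $K(M)$ is a $\pi_1(S,\bar{s})$-invariant $K(N)/K(M)$-orbit of symplectic isomorphisms on $M$-torsion. The $\pi_1(S,\bar{s})$-action factors through a homomorphism $\rho_M:\pi_1(S,\bar{s})\to K(N)/K(M)$, and I set $H_M:=\ker\rho_M$. By Lemma \ref{actionpi}, $H_M$ corresponds to a finite \'etale surjective cover $t_M:S_M\to S$ (with $S_N=S$), and the inclusions $H_L\subseteq H_M$ for $N\mid M\mid L$ produce the transition maps $g_{L,M}$. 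Over $S_M$ the chosen representative becomes $\pi_1(S_M,\bar{s})$-invariant, and by the equivalence between Definitions \ref{def1} and \ref{def2} at finite level (again via Lemma \ref{actionpi}) this descends to a symplectic isomorphism $\alpha_{/S_M}:(\Z/M\Z)^{2g}_{/S_M}\to A[M]_{/S_M}$, compatible under $g_{L,M}$ by construction.

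For the reverse direction, I would choose a compatible system of lifts $\bar{s}_M\to S_M$, pull back each $\alpha_{/S_M}$ to obtain compatible symplectic isomorphisms $(\Z/M\Z)^{2g}\to A[M](\bar{s})$, and take the inverse limit over $M$ to recover $\alpha_{\bar{s}}$. Two compatible systems of lifts differ by deck transformations of the covers $t_M$, which act through subgroups of $K(N)/K(M)$, so the resulting $K(N)$-orbit is well-defined; its $\pi_1(S,\bar{s})$-invariance is automatic because modulo $K(M)$ the action factors through the finite quotient $\pi_1(S,\bar{s})/\pi_1(S_M,\bar{s}_M)$, which embeds into $K(N)/K(M)$.

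The main obstacle I anticipate is verifying that these two constructions are mutually inverse. At each finite level $M$ this reduces to checking that the homomorphism $\rho_M$ produced by composing both procedures agrees with the original data, and symmetrically that the tower recovered from the orbit construction agrees with the given one on $\pi_1$-sets. Once everything is translated via Lemma \ref{actionpi} into a statement about open subgroups of $\pi_1(S,\bar{s})$, the verification is purely group-theoretic, so the difficulty is essentially bookkeeping the interplay between the actions of $K(N)$ and $\pi_1(S,\bar{s})$. A secondary subtlety worth tracking is the necessity of passing to $K(N)$-orbits rather than individual isomorphisms, since without this quotient there is no canonical compatible choice of lifts $\bar{s}_M$.
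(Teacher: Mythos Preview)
The paper does not actually prove this proposition: it is stated with a citation to \cite{cptpel}, 1.3.6.5--1.3.6.6, and no argument is given. So there is no ``paper's proof'' to compare your sketch against; your write-up stands on its own.

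Your outline is broadly correct in spirit --- the translation via Lemma~\ref{actionpi} between \'etale sheaves/covers and $\pi_1$-sets is exactly the right mechanism --- but one point deserves care. You build $S_M$ as the connected cover corresponding to $\ker\rho_M$, where $\rho_M$ is obtained from a \emph{chosen} representative $\alpha_{\bar s}$ of the $K(N)$-orbit. This has two consequences. First, $S_M$ depends on that choice (different representatives conjugate $\rho_M$, which does not affect the kernel, but does affect which isomorphism $\alpha_{/S_M}$ you land on). Second, and more seriously, your $S_M$ is always connected, whereas the data in the proposition does not require this, and the natural candidate --- the $K(N)/K(M)$-torsor of naive level-$M$ structures refining the given level-$N$ structure --- is typically disconnected when $\rho_M$ is not surjective. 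As written, your two constructions are therefore not mutually inverse on the nose: starting from a tower with disconnected $S_M$, passing to the orbit and back yields the connected tower instead. The fix is to define $S_M$ directly as the \'etale $S$-scheme corresponding to the $\pi_1$-set given by the whole $K(N)/K(M)$-orbit (a torsor under that group), and take $\alpha_{/S_M}$ to be the tautological section; this is canonical and makes the inverse construction transparent. With that adjustment your sketch goes through.
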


The proposition above suggests an alternative (and more convenient to us) way to define level structures for general open compact subgroups of $\GSp_{2g}(\widehat{\Z})$.
\begin{definition}
Let $U$ be an open compact subgroup of $\GSp_{2g}(\widehat{\Z})$ and for any integer $M$ such that $K(M) \subset U$ denote by $U_M$ the quotient $U/K(M)$. Then, a symplectic level $U$-structure of $(A,\lambda)_{/S}$ is a collection $\{ \alpha_{U_M} \}_{M}$, where $M$ varies among the integers such that $K(M) \subset U$, of elements $\alpha_{U_M}$ such that \begin{enumerate} 
\item $\alpha_{U_M}$ is a locally \'etale defined $U_M$-orbit of a naive symplectic full level $M$-structure;
\item If $L|M$, $\alpha_{U_L}$ corresponds to the reduction modulo $L$ of $\alpha_{U_M}$.
\end{enumerate} 
\end{definition}


We finally note that passing to geometric fibres, a symplectic level $U$-structure gives a $\pi_1(S,\bar{s})$-invariant $U$-orbit of a symplectic isomorphism $$\alpha:\widehat{\Z}^{2g}\longrightarrow T_{\bar{s}}(A),$$
at each geometric point.
If $U$ is neat, then this allows us to describe the complex points of the quasi-projective $\Q$-scheme $Y_{\GSp_{2g}}(U)$ as the ones of the Shimura variety attached to $(\GSp_{2g},X)$ \[ Y_{\GSp_{2g}}(U)(\C) =\GSp_{2g}(\Q)\backslash X\times \GSp_{2g}(\A_f)/ U. \]

\subsubsection{Tower of symplectic level structures at $p$}

One of the main results of this article involves the computation of distribution relations for \'etale Eisenstein classes attached to moduli of abelian schemes with specific level structures at a prime $p$.
 For representability issues, we work with open compact subgroups $U$ which decompose as $U_p\cdot U^{(p)} \subset \GSp_{2g}(\A_{f})$, where $U_p  \subset \GSp_{2g}(\Q_p)$ and we fix a neat $U^{(p)} \subset \GSp_{2g}(\widehat{\Z}^{(p)}).$
Under this assumption, $U=U^{(p)}\GSp_{2g}(\Z_p)$ is neat and there is a scheme $Y_{\GSp_{2g}}(U)/ \Q$ that parametrises p.p. abelian schemes (of rel. dim. $g$) with $U$-level structure. Let $\Am=\Am_g(U)$ denote its universal abelian scheme and consider the following functor 
$$G_1(p^m):Sch_{/_{Y_{\GSp_{2g}}(U)}}  \longrightarrow Sets,$$ defined by $$S/Y_{\GSp_{2g}}(U) \mapsto \{ \mbox{points of exact order } p^m \mbox{ of } \Am \times_{Y_{\GSp_{2g}}(U)} S/S \}.$$

\begin{remark}
Since we are working in characteristic zero, by point of exact order $p^m$ of $A/S$, we simply mean a section $S \to A$ whose pull-back to each geometric fibre is a point of exact order $p^m$.
\end{remark}

We now compare $G_1(p^m)$ with the sheaf induced by the following open compact subgroups of $\GSp_{2g}(\Z_p)$.
\begin{definition}\label{symplecticklingen}  For any integer $m \geq 1$, define the subgroup $U_1(p^m) \subset \GSp_{2g}(\Z_p)$ as follows: \begin{eqnarray}
U_1(p^m)&:=  &\{ M \in  \GSp_{2g}(\Z_p) | R_{2g}(M) \equiv (0, \cdots, 0,1) \text{ mod }p^m \} 
\end{eqnarray}
where $R_i(M)$ denotes the $i$-th row of $M$.
If $N= \prod p_i^{e_i}$, then $U_1(N) \subset \GSp_{2g}(\widehat{\Z})$ is defined to be the subgroup of elements $(g_p)_p$ such that $g_{p_i}\in U_1(p_i^{e_i})$.
\end{definition}

\begin{lemma}\label{pointsandls} Let $A/S$ be an abelian scheme of relative dimension g over a $\Q$-scheme $S$, with a fixed principal polarisation on it. Then, there is a bijection between points of exact order $p^m$ of $A$ and symplectic level $U_1(p^m)$-structures.  
\end{lemma}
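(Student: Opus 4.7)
The plan is to identify $U_1(p^m)$ as the stabilizer of the last standard basis vector of $\widehat\Z^{2g}$ modulo $p^m$, and then translate $\pi_1(S,\bar s)$-invariant $U_1(p^m)$-orbits of symplectic trivializations directly into $S$-sections of $A[p^m]$ of exact order $p^m$.

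The central algebraic observation I would start from is that the defining condition $R_{2g}(M)\equiv(0,\ldots,0,1)\bmod p^m$ is equivalent to $e_{2g}\cdot M\equiv e_{2g}\bmod p^m$, where $e_{2g}:=(0,\ldots,0,1)$. Hence $U_1(p^m)$ is the stabilizer of $e_{2g}\bmod p^m$ in $\GSp_{2g}(\Z_p)$ for the natural action on $(\Z/p^m\Z)^{2g}$ compatible with Definition \ref{def2}. Combining this with the transitivity of $\operatorname{Sp}_{2g}(\Z_p)$ (and hence $\GSp_{2g}(\Z_p)$) on primitive vectors of $(\Z/p^m\Z)^{2g}$---which follows from completing any primitive vector to a symplectic basis, feasible because every vector is isotropic for $J$---one obtains a canonical bijection between $\GSp_{2g}(\Z_p)/U_1(p^m)$ and the set of elements of exact order $p^m$ in $(\Z/p^m\Z)^{2g}$.

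With this in hand, the bijection of the lemma is the fibrewise reduction $\alpha_{\bar s}\mapsto \alpha_{\bar s}(e_{2g})\bmod p^m$. Given a symplectic level $U_1(p^m)$-structure, that is a $\pi_1(S,\bar s)$-invariant $U_1(p^m)$-orbit $[\alpha_{\bar s}]$ of a symplectic isomorphism $\alpha_{\bar s}\colon\widehat\Z^{2g}\to T_{\bar s}(A)$ at each geometric point $\bar s$, the element $\alpha_{\bar s}(e_{2g})\bmod p^m\in A[p^m](\bar s)$ is well-defined by the stabilizer property, has exact order $p^m$ since $\alpha_{\bar s}$ is an isomorphism, and is $\pi_1(S,\bar s)$-invariant because the orbit is. By Lemma \ref{actionpi}, such fibrewise-invariant data assemble to an $S$-section of $A[p^m]$ of exact order $p^m$. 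Conversely, from a point $P$ of exact order $p^m$ I would pick at each geometric fibre some symplectic $\alpha_{\bar s}$ with $\alpha_{\bar s}(e_{2g})\equiv P_{\bar s}\bmod p^m$ (possible by the transitivity step); the stabilizer identification makes the $U_1(p^m)$-orbit $[\alpha_{\bar s}]$ canonical, and $\pi_1$-invariance of $P_{\bar s}$ forces $\pi_1$-invariance of $[\alpha_{\bar s}]$. Coherence across connected components of $S$ required by Definition \ref{def2} is automatic because both sides are encoded by the same section $P$.

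The only substantive technical step will be the transitivity of $\operatorname{Sp}_{2g}(\Z_p)$ on primitive vectors modulo $p^m$; everything else is pure bookkeeping through the dictionary set up in Definition \ref{def2} and Lemma \ref{actionpi}.
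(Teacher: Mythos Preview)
Your argument is correct and shares the same algebraic core as the paper's proof: identify $U_1(p^m)$ as the stabilizer of the last standard basis vector modulo $p^m$ and use transitivity of $\operatorname{Sp}_{2g}$ on primitive vectors to pass between points of exact order $p^m$ and $U_1(p^m)$-orbits of trivializations. The only difference is packaging: you work directly through the Tate-module/$\pi_1$ formulation of Definition~\ref{def2} and invoke Lemma~\ref{actionpi} to descend, whereas the paper passes to an \'etale cover trivializing $A[p^m]$, completes the point to a naive full level structure there, and then builds the compatible tower by lifting to higher $p$-power torsion---two routes that are interchangeable via the equivalence recorded after Definition~\ref{def2}.
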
 
\begin{proof}
Denote by $U_{p^m}$ the image of $U_1(p^m)$ under reduction mod $p^m$. 
Since $A$ is of finite presentation over $S$, we can reduce to work over a locally Noetherian base $S$; moreover, since $S$ is the disjoint union of its connected components and \'etale sheaves send co-products into products, it is sufficient to work over a connected locally Noetherian $S$.
 Finally, by replacing $S$ by an \'etale finite surjective cover of it if necessary, we can  assume $$A[p^m] \simeq (\Z/p^m\Z)^{2g}_{/S}.$$ A point $t \in A(S)$ of exact order $p^m$  lifts to a symplectic "naive" level $p^m$-structure and such a lift is unique up to action of $U_{p^m}$. Indeed, $t$ defines a monomorphism over $S$ $$t:({\Z/p^m\Z})_{/S} \hookrightarrow A[p^m],$$ and it can be completed to a full isomorphism $$(\Z/p^m\Z)^{2g}_{/S}\longrightarrow A[p^m]$$ uniquely up to the action of $U_{p^m}$.

 After passing to a suitable \'etale cover of $S$, we can lift $t$ to a point of exact order $p^{m+1}$ of $A$, which is mapped to $t$ under (the abstract group homomorphism) reduction modulo $p^m$. Repeating this procedure for any $l \geq m$ uniquely defines a level $U_1(p^m)$-structure of $A$. The converse is proved similarly.
\end{proof}

This result directly implies the following.
\begin{cor}\label{corollaryonlevelsatp} The scheme $\mathcal{G}_1(p^m)$ which represents $G_1(p^m)$ is isomorphic to $Y_{\GSp_{2g}}(U^{(p)}U_1(p^m))$ as a covering of $Y_{\GSp_{2g}}(U^{(p)}\GSp_{2g}(\Z_p))$.
\end{cor}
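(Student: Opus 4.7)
The plan is to prove the corollary by a direct Yoneda-style comparison of the two moduli functors involved, making essential use of Lemma \ref{pointsandls}.

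First, I would recall that $Y_{\GSp_{2g}}(U^{(p)}U_1(p^m))$ represents the functor on $Sch/\Q$ sending $S$ to isomorphism classes of principally polarised abelian schemes $(A,\lambda)/S$ of relative dimension $g$ equipped with a symplectic level $U^{(p)}U_1(p^m)$-structure. Such a level structure (viewed at a geometric fibre, say, in the sense of Definition \ref{def2}) is a $\pi_1(S,\bar s)$-invariant $U^{(p)}U_1(p^m)$-orbit of a symplectic isomorphism $\widehat{\Z}^{2g}\to T_{\bar s}(A)$. Because $U^{(p)}U_1(p^m)$ decomposes as a product of its components away from $p$ and at $p$, and because $T_{\bar s}(A)=\prod_\ell T_{\ell,\bar s}(A)$, giving such a structure is the same as giving a $U^{(p)}$-level structure together with a $U_1(p^m)$-structure at $p$.

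Next, I would unwind what it means to be fibered over $Y_{\GSp_{2g}}(U^{(p)}\GSp_{2g}(\Z_p))$. A scheme $S/Y_{\GSp_{2g}}(U)$ is precisely the datum of $(A,\lambda)/S$ with $U^{(p)}$-level structure (and a trivial structure at $p$, as $\GSp_{2g}(\Z_p)$-orbits are trivial). Thus giving a lift from $S/Y_{\GSp_{2g}}(U)$ to $S/Y_{\GSp_{2g}}(U^{(p)}U_1(p^m))$ is exactly the datum of an additional symplectic $U_1(p^m)$-level structure on $\Am\times_{Y_{\GSp_{2g}}(U)}S$. By Lemma \ref{pointsandls} applied to the pulled-back abelian scheme, such a structure corresponds bijectively and functorially to a point of exact order $p^m$ of $\Am\times_{Y_{\GSp_{2g}}(U)}S$. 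This identifies the functor of points of $Y_{\GSp_{2g}}(U^{(p)}U_1(p^m))$ over $Y_{\GSp_{2g}}(U)$ with $G_1(p^m)$.

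The representability of $G_1(p^m)$ by a scheme $\mathcal{G}_1(p^m)$ together with Yoneda then yields the asserted isomorphism over $Y_{\GSp_{2g}}(U^{(p)}\GSp_{2g}(\Z_p))$. The one point requiring a little care is the neatness (hence representability) of $U^{(p)}U_1(p^m)$, which follows since $U^{(p)}U_1(p^m)\subset U^{(p)}\GSp_{2g}(\Z_p)$ is a subgroup of the neat group $U$. Apart from that, the argument is essentially a bookkeeping exercise; the only substantive input is the bijection of Lemma \ref{pointsandls}, and the main conceptual step is checking that the decomposition of the level structure at $p$ vs.\ away from $p$ is compatible with the functorial definition of $G_1(p^m)$.
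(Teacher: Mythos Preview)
Your proposal is correct and follows exactly the approach intended by the paper, which simply states that the corollary is a direct consequence of Lemma \ref{pointsandls} without spelling out any details. You have supplied the natural Yoneda argument that the paper leaves implicit, and the bookkeeping (decomposition of level structures, neatness) is handled appropriately.
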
 
This generalises a well-known result for modular curves (e.g. \cite[Theorem 4.3.3]{KLZ}),  which plays an important role in the study of the pushforward relations of Eisenstein classes for $\GL_2$. As in \emph{loc. cit.}, we use Corollary \ref{corollaryonlevelsatp} to prove pushforward relations of \'etale Eisenstein classes for $\GSp_{2g}$ in Subsection 2.4. 
\begin{remark}As a consequence of the Chinese Remainder Theorem, if $$N=\prod_{i=1}^r p_i^{e_i},$$ $Y_{\GSp_{2g}}(U^{(N)}U_1(N))$ parametrises p.p. abelian schemes of relative dimension $g$ with level structure $U^{(N)}$ and $r$ different points each of exact order $p_i^{e_i}$. 
\end{remark}

\subsubsection{Integral models of symplectic Shimura varieties}
In the following, we recall the existence of integral models for the symplectic Shimura variety $Y_{\GSp_{2g}}$ of level $U^{(p)}U_1(p^r)$, which is needed in Subsection 2.6. We refer to \cite[Section 3]{moonen} or \cite[Section 6.4.1]{hida} for further details. By \cite[Theorem 7.10]{git}, there exists a (fine) moduli space over $\Z[\frac{1}{dp}]$, for an auxiliary integer $d \geq 3$ coprime to $p$ which depends on $U^{(p)}$, of isomorphism classes of principally polarised abelian schemes with symplectic level $U(p^r):=U^{(p)}K(p^r)$-structure. We denote it by $\mathcal{Y}_{\GSp_{2g}}(U(p^r))_{/\Z[\frac{1}{dp}]}$. Let $\mathcal{Y}_{\GSp_{2g}}(U^{(p)}U_1(p^r))_{/\Z[\frac{1}{dp}]}$ be its quotient by $U_1(p^r)/K(p^r)$. 

Similarly to the previous subsection, let $\mathcal{G}_1(p^r)$ be the finite \'etale sheaf over $\mathcal{Y}_{\GSp_{2g}}(U^{(p)}\GSp_{2g}(\Z_p))_{/\Z[\frac{1}{dp}]}$ associated to points of exact order $p^r$ of the universal abelian scheme of $\mathcal{Y}_{\GSp_{2g}}(U^{(p)}\GSp_{2g}(\Z_p))_{/\Z[\frac{1}{dp}]}$.
Then, Corollary \ref{corollaryonlevelsatp} is still true in this setting. 
\begin{lemma}\label{integralmodelversion}
The scheme $\mathcal{G}_1(p^r)$ is isomorphic to $\mathcal{Y}_{\GSp_{2g}}(U^{(p)}U_1(p^r))$ as a covering of $\mathcal{Y}_{\GSp_{2g}}(U^{(p)}\GSp_{2g}(\Z_p))$.
\end{lemma}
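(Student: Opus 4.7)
The plan is to observe that the proof of Corollary \ref{corollaryonlevelsatp} goes through verbatim in the integral setting, because the only place where the hypothesis $S \in \operatorname{Sch}_{/\Q}$ was used in Lemma \ref{pointsandls} was to guarantee that $A[p^r]$ is finite \'etale over the base, and this remains true as soon as $p$ is invertible on $S$. Since by construction $\mathcal{Y}_{\GSp_{2g}}(U^{(p)}\GSp_{2g}(\Z_p))_{/\Z[\frac{1}{dp}]}$ is defined over $\Z[\frac{1}{dp}]$, every test scheme $S \to \mathcal{Y}_{\GSp_{2g}}(U^{(p)}\GSp_{2g}(\Z_p))_{/\Z[\frac{1}{dp}]}$ has $p \in \Gamma(S,\O_S)^\times$, so this condition holds automatically.

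Concretely, the scheme $\mathcal{Y}_{\GSp_{2g}}(U^{(p)}U_1(p^r))$ is the quotient of $\mathcal{Y}_{\GSp_{2g}}(U(p^r))$ by $U_1(p^r)/K(p^r)$, hence represents the functor of principally polarised abelian schemes $(A,\lambda)/S$ of relative dimension $g$ equipped with a symplectic $U^{(p)}$-level structure away from $p$ and a symplectic $U_1(p^r)$-level structure at $p$ (in the sense of the tower definition, applied \'etale-locally). Thus it suffices to exhibit a functorial bijection, for each connected locally Noetherian $\Z[\frac{1}{dp}]$-scheme $S$ carrying such an $(A,\lambda)$ together with its $U^{(p)}$-level structure, between points of exact order $p^r$ of $A$ and $U_1(p^r)$-level structures at $p$.

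I would then argue exactly as in Lemma \ref{pointsandls}. Pass to a connected component of $S$ and then to a finite \'etale surjective cover over which $A[p^r] \simeq (\Z/p^r\Z)^{2g}_{/S}$, which is possible because $A[p^r]$ is finite \'etale over $S$. A point $t \in A(S)$ of exact order $p^r$ defines a closed immersion $t:(\Z/p^r\Z)_{/S} \hookrightarrow A[p^r]$, and one checks that it extends to a symplectic isomorphism $(\Z/p^r\Z)^{2g}_{/S} \to A[p^r]$ uniquely up to the action of $U_1(p^r)/K(p^r)$. Lifting successively to points of exact order $p^{r+k}$ (possible after further \'etale covers, again because $p$ is invertible) and forming the compatible system produces a symplectic $U_1(p^r)$-level structure. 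Conversely, a $U_1(p^r)$-level structure gives the image of the last basis vector of $(\Z/p^r\Z)^{2g}$, which is a point of exact order $p^r$. The assignment is functorial and hence defines the claimed isomorphism over $\mathcal{Y}_{\GSp_{2g}}(U^{(p)}\GSp_{2g}(\Z_p))_{/\Z[\frac{1}{dp}]}$.

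There is essentially no obstacle: the proof is a direct extension of Lemma \ref{pointsandls}, and the only potential worry, namely the possible appearance of wild ramification or non-\'etaleness of $p$-power torsion, is ruled out because $p$ is inverted on the base.
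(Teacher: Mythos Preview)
Your proposal is correct and follows exactly the paper's approach: the paper's proof simply notes that since $p$ is invertible on $\mathcal{Y}_{\GSp_{2g}}(U^{(p)}\GSp_{2g}(\Z_p))_{/\Z[\frac{1}{dp}]}$, the argument of Corollary~\ref{corollaryonlevelsatp} (via Lemma~\ref{pointsandls}) goes through unchanged. You have merely spelled out that one-line remark in more detail.
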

\begin{proof} Since $p$ is invertible in $\mathcal{Y}_{\GSp_{2g}}(U^{(p)}\GSp_{2g}(\Z_p))_{/\Z[\frac{1}{dp}]}$, the proof is identical to the one of Corollary \ref{corollaryonlevelsatp}.
\end{proof}
\subsection{Continuous \'etale cohomology}\label{continuousetalecohomology}
 In this article, we work with continuous \'etale cohomology for schemes over a general base, introduced by Jannsen in \cite{Jannsen}. We now recall its definition.
\begin{definition}\label{defetcontcohom} For an inverse system $(\mathcal{F}_n)$ of constructible \'etale $\Z/p^n$-sheaves over a scheme $X$, define $H^i_\et(X,(\mathcal{F}_n))$ to be the $i$-th derived functor of $(\mathcal{F}_n) \mapsto \varprojlim_n  H^0_{\et}(X,\Ff_n)$. In particular, for $p$ invertible on $X$ and an integer $j$, we define \[ H^i_\et(X,\Z_{p}(j)):=H^i_\et(X,(\Z/p^n\Z(j))). \] 
\end{definition}
Note that if $H^{i-1}_{\et}(X,\Ff_n)$ is finite for all $n$, then \[H^i_\et(X,(\Ff_n)) \simeq \varprojlim_n H^{i}_{\et}(X,\Ff_n).\] 

\begin{remark}\label{nicepropofcohom}
For instance, this last condition is satisfied whenever $X$ is a scheme over $S$, where the base $S$ is an algebraically closed field or it is a scheme of finite type over $\Z$.
\end{remark}
Finally, for $\Ff=(\Ff_n)_n$ as in Definition \ref{defetcontcohom}, we denote $\Ff \otimes_{\Z_p} \Q_p$ by $\Ff_{\Q_p}$ and  define 
\[ H^i_\et(X,\Ff_{\Q_p}) :=H^i_\et(X,\Ff) \otimes_{\Z_p} \Q_p.\]

\section{Eisenstein classes for abelian schemes}\label{eisclassabschm}
\subsection{Recap of the case of elliptic curves}
Let $\pi:E \to S$ be an elliptic curve over the scheme $S$ and fix a prime $p$ invertible on $S$. For any integer $c$, consider \begin{align*}
U_c:= & E \smallsetminus E[c]; \\
\pi_c:= & \pi|_{U_c}.
\end{align*}
In \cite{kato}, Kato defines Siegel units as the evaluation at torsion points of certain canonical functions in $\O(U_c)^*$. These units are constructed from Cartier divisors, which are invariant under norm maps. Indeed, recall we have the following.
\begin{theorem}[\cite{kato}, Proposition 1.3]\label{theo1}Let $E$ be an elliptic curve over a scheme $S$ and fix an integer $c$ prime to 6. Then, there exists a unique element $_c\theta_E \in \O(U_c)^*$ such that: 
\begin{enumerate}
\item $_c\theta_E$ has divisor $c^2(0)-E[c]$ on $E$, where the zero-section $(0)$ of $E$ and the kernel of the multiplication-by-$c$ morphism $E[c]$ are regarded as Cartier divisors;
\item for any integer $a$ coprime to $c$, $_c\theta_E$ is  compatible under the norm map $N_a:\O(U_{ac})^* \to \O(U_c)^*$ associated to the pullback of the multiplication-by-$a$ map $U_{ac} \to U_c$, i.e. $$N_a({_c\theta_E})= {_c\theta_E};$$  
\item for any integers $c_1,c_2$ prime to $6$, we have
\[_{c_1 \cdot c_2}\theta_E=c_1^*({_{c_2}\theta_E}) \cdot {_{c_1}\theta_E}^{c_2^2}.\]
\end{enumerate}
\end{theorem}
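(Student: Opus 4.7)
The plan is to prove existence and uniqueness simultaneously, by first constructing $_c\theta_E$ up to an ambiguity in $\O_S(S)^*$ using (1), and then rigidifying via (2); property (3) then drops out from matching divisors.

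First I would show the divisor $D_c := c^2 (0_E) - E[c]$ on $E$ is principal. The line bundle $\O_E((0_E))$ is symmetric, and by the theorem of the cube (equivalently, Mumford's formula for symmetric line bundles), $[c]^* \O_E((0_E)) \cong \O_E((0_E))^{\otimes c^2}$. Since $[c]^*(0_E) = E[c]$ as Cartier divisors, this means $\O_E(E[c]) \cong \O_E(c^2 (0_E))$, so $D_c$ is principal. A trivialisation yields a function $f \in \O(U_c)^*$ with $\operatorname{div}(f) = D_c$, unique up to $\O_E(E)^* = \O_S(S)^*$ (since $\pi : E \to S$ is proper with geometrically connected fibres, so $\pi_* \O_E = \O_S$).

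Next I would reduce the norm compatibility to a condition on constants. For any $a$ coprime to $c$ and any representative $f$ as above, the divisor of $N_a(f)$ on $E$ is $[a]_* D_c$. The map $[a]: E \to E$ is finite \'etale of degree $a^2$ and fixes the origin, so $[a]_*(0_E) = (0_E)$; and multiplication by $a$ permutes $E[c]$ bijectively (as $\gcd(a,c) = 1$), so $[a]_* E[c] = E[c]$. Therefore $\operatorname{div}(N_a(f)) = D_c$, and $\lambda_a := N_a(f)/f$ lies in $\O_S(S)^*$. Using $N_{ab} = N_a \circ N_b$ and the projection formula $N_a \circ [a]^* = (\cdot)^{a^2}$, the family $\{\lambda_a\}$ obeys a cocycle relation in $a$, and rescaling $f$ by $u \in \O_S(S)^*$ shifts $\lambda_a$ to $\lambda_a \cdot u^{a^2-1}$.

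The heart of the argument, and the main obstacle, is the rigidification: producing a coherent choice of $f$ making $\lambda_a = 1$ for all $a$ coprime to $c$. The standard resolution is to exhibit $_c\theta_E$ by an explicit formula, in the classical analytic setting as a specific product of translates of the Weierstrass $\sigma$-function, and algebraically as a Kato-style ``theta element'' built from the canonical isomorphism $[c]^* \O_E((0_E)) \cong \O_E((0_E))^{\otimes c^2}$ together with a rigidification along the identity section; the distribution relation then holds by construction. One first carries this out on the universal elliptic curve over a modular-curve base (where the global units are well understood), and then propagates to arbitrary $E/S$ by base change. Finally, (3) reduces to matching divisors: both $_{c_1 c_2}\theta_E$ and $c_1^*({}_{c_2}\theta_E) \cdot ({}_{c_1}\theta_E)^{c_2^2}$ have divisor $(c_1 c_2)^2 (0_E) - E[c_1 c_2]$, so they agree up to a scalar in $\O_S(S)^*$, which the uniqueness from (1)+(2) then forces to equal $1$.
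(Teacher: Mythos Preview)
The paper does not give its own proof of this theorem: it is stated with a citation to \cite{kato}, Proposition 1.3, and no proof environment follows. So there is nothing in the paper to compare against for the existence and uniqueness parts. The only place the paper touches the argument is later, inside the proof of Proposition~\ref{prop2.7}, where it remarks that (3) ``is obtained in \cite{kato} by noticing that both sides have the same divisor and that their ratio must be in $\O(S)^*$, hence it is equal to $1$, by the norm compatibility property of the units $_c\theta_E$.'' Your treatment of (3) is exactly this.

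Your sketch is broadly the standard one and is correct in outline. One point you should make explicit, since it explains the hypothesis $(c,6)=1$: for the uniqueness (and hence for pinning down the constant in (3)), if $f,g$ both satisfy (1) and (2) then $u:=f/g\in\O_S(S)^*$ and $N_a(u)=u^{a^2}$, so (2) forces $u^{a^2-1}=1$ for every $a$ coprime to $c$. Because $(c,6)=1$ you may take $a=2$ and $a=3$, giving $u^3=u^8=1$ and hence $u=1$. Your cocycle/rescaling discussion sets this up but stops just short of saying it; without it the ``uniqueness from (1)+(2)'' you invoke at the end is not justified. The existence step you leave at the level of ``exhibit by an explicit formula'', which is indeed what Kato does; that is fine for a sketch, but be aware it is where the real content lies.
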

Kato defines Siegel units as pullback by torsion sections of the units $_c\theta_{E}$, associated to  the universal elliptic curve $E/Y(N)$. 
\begin{definition}
Fix an integer $N$ and an integer $c$ coprime with $6N$.
Let $(E,e_1,e_2)$ be the universal elliptic curve over the modular curve $Y(N)$ with full level $N$ structure, then for any $N$-torsion section $x=ae_1+be_2$, with $a,b \in \Z/N\Z$, we define the Siegel unit $$_cg_x := x^*{_c\theta_E} \in \O(Y(N))^*.$$ 
\end{definition}
The idea behind Theorem \ref{theo1} relies on the fact that a Cartier divisor, which by definition is locally principal, that satisfies the rigid condition given by Theorem \ref{theo1}$(2)$ is globally principal. Once wework with an abelian scheme $A$ of dimension $g$ higher than 1, this strategy does not apply since $c^{2g}(0) - A[c]$ is not a Cartier divisor. We can state Theorem \ref{theo1} in a more convenient way for our purposes.

\begin{definition}\label {residuemap}
We define the motivic residue map for $E/S$ to be the map 
$$res:H^1_{\text{mot}}(U_c,\Z(1)) \longrightarrow H^0_{\text{mot}}(E[c],\Z)^{deg=0},$$ 
 which comes from the long exact (Gysin) sequence associated to the triple $U_c\hookrightarrow E \hookleftarrow E[c]$.
 \end{definition}
Recall that $H^1_{\rm mot}(U_c,\Z(1))=\O(U_c)^*$ and that $H^0_{\rm mot}(E[c],\Z)^{deg=0}$ is identified with the group of degree 0 divisors supported in $E[c]$. 
We can re-interpret Theorem \ref{theo1} as follows.
\begin{theorem}\label{cohosiegel}
Let $E$ be an elliptic curve over $S$ and let $c$ be an integer prime to 6.
The divisor $c^2(0)-E[c]$ lifts canonically under the residue map $$res:H^1_{mot}(U_c,\Z(1))\longrightarrow H^0_{mot}(E[c],\Z)^{deg=0}$$ to an element $_c\theta_E \in H^1_{mot}(U_c,\Z(1))$ such that, for any integer $a$ coprime to $c$, $$N_a({_c\theta_E})={_c\theta_E},$$ and,  if $c_1,c_2$ are integers prime to $6$, we have
\[_{c_1 \cdot c_2}\theta_E=c_1^*({_{c_2}\theta_E}) \cdot {_{c_1}\theta_E}^{c_2^2}.\]
\end{theorem}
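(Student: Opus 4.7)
The plan is to observe that Theorem \ref{cohosiegel} is a cohomological reformulation of Theorem \ref{theo1}: granting the latter, the task reduces to matching up the objects on both sides under the standard low-degree identifications in motivic cohomology.

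First, I would recall the Gysin (localisation) long exact sequence attached to the closed immersion $E[c] \hookrightarrow E$ with open complement $U_c$, in the relevant range
\[
H^1_{mot}(E, \Z(1)) \longrightarrow H^1_{mot}(U_c, \Z(1)) \xrightarrow{\,res\,} H^0_{mot}(E[c], \Z) \longrightarrow H^2_{mot}(E, \Z(1)),
\]
together with the canonical identification $H^1_{mot}(X, \Z(1)) = \O(X)^*$ for a regular scheme $X$. Under this identification the motivic residue agrees with the classical divisor map $f \mapsto \operatorname{div}(f)|_{E[c]}$. Since $E/S$ is proper and every $f \in \O(U_c)^*$ extends to a rational function on $E$ with zeros and poles supported in $E[c]$, this divisor has degree zero on each geometric fibre, so $res$ lands in $H^0_{mot}(E[c], \Z)^{deg=0}$ exactly as built into Definition \ref{residuemap}.

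Second, I would observe that $c^2(0) - E[c]$ has degree zero (since $E[c]$ has fibrewise order $c^2$) and therefore defines an element of $H^0_{mot}(E[c], \Z)^{deg=0}$. Theorem \ref{theo1}(1) produces a unique unit $_c\theta_E \in \O(U_c)^*$ whose divisor is precisely $c^2(0) - E[c]$; under the identification above this unit is the required canonical lift, and uniqueness is the statement that the kernel of $res$, namely $\O(E)^*$, contains no element with the prescribed divisor other than the unit itself. The remaining compatibilities then transfer formally: the cohomological norm map attached to the multiplication-by-$a$ map $U_{ac} \to U_c$ coincides, under $H^1_{mot}(-, \Z(1)) = \O(-)^*$, with the multiplicative norm of units used in Theorem \ref{theo1}, and pullback in motivic cohomology restricts to pullback of units. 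Hence Theorem \ref{theo1}(2) and (3) translate verbatim into $N_a({_c\theta_E}) = {_c\theta_E}$ and $_{c_1 \cdot c_2}\theta_E = c_1^*({_{c_2}\theta_E}) \cdot {_{c_1}\theta_E}^{c_2^2}$.

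The main conceptual point (indeed the only one) is the compatibility between the motivic residue in this low degree and the classical divisor map on units; this is well-known, e.g. through the Gersten resolution of $\Z(1)$ on a regular scheme. Beyond invoking Theorem \ref{theo1}, I do not foresee any genuine obstacle: the higher-dimensional analogue, where $c^{2g}(0) - A[c]$ is no longer a Cartier divisor and one must work directly in motivic or \'etale cohomology rather than at the level of functions, is precisely the point at which this sleight of hand breaks down and the heavier constructions of \cite{Faltings2005} become necessary.
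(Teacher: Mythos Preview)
Your proposal is correct and matches the paper's approach exactly: the paper gives no separate proof of Theorem~\ref{cohosiegel}, presenting it explicitly as a reinterpretation of Theorem~\ref{theo1} after recalling the identifications $H^1_{\mathrm{mot}}(U_c,\Z(1))=\O(U_c)^*$ and $H^0_{\mathrm{mot}}(E[c],\Z)^{deg=0}=\{\text{degree-}0\text{ divisors on }E[c]\}$. One small wording issue: your sentence about uniqueness is garbled (elements of the kernel $\O(E)^*$ have \emph{trivial} divisor, not the prescribed one); the canonicity of the lift comes from the norm compatibility, not the divisor condition alone---but since you are invoking Theorem~\ref{theo1} wholesale, which already packages this, the argument is unaffected.
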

This cohomological interpretation extends to other cohomology theories such as the \'etale one.

\subsubsection{The \'etale classes for elliptic curves}
In the following, we recall how to construct \'etale cohomology classes in the elliptic curve case. As a direct consequence of Theorem \ref{theo1}, we have classes $${_c\theta^{\et}_E} \in H_{\et}^{1}(U_c,\Z_{p}(1)),$$ which satisfy the following: \begin{itemize}
\item[P1.] For any $r$ prime to $c$, ${_c\theta^{\et}_E}$ is invariant under trace maps associated to multiplication by $r$, i.e. $$\operatorname{Tr}_r({_c\theta^{\et}_E})={_c\theta^{\et}_E};$$
\item[P2.] The classes ${_{c_1\cdot c_2}\theta^{\et}_E}$, ${_{c_1}\theta^{\et}_E}$ and $_{c_2}\theta^{\et}_E$ are related by \[_{c_1\cdot c_2}\theta^{\et}_E=[c_1]^*({_{c_2}\theta^{\et}_E})+c_2^{2}{_{c_1}\theta^{\et}_E}=[c_2]^*({_{c_1}\theta^{\et}_E})+c_1^{2}{_{c_2}\theta^{\et}_E}.\]
\end{itemize}

\begin{remark}
Recall that the trace map associated to multiplication by $r$ prime to $c$ is the composition of \[\operatorname{Tr}_r:H_{\et}^{1}(U_c,\Z_{p}(1)) \longrightarrow H_{\et}^{1}(U_{rc},\Z_{p}(1)) \longrightarrow H_{\et}^{1}(U_c,\Z_{p}(1)),\] where the first map is just restriction to $U_{rc}$ and the second is the trace map associated to $[r]$.
\end{remark}
The \'etale cohomology classes associated to elliptic curves are the \'etale realisation of the units of Theorem \ref{theo1}. 
Indeed, using the connecting homomorphism $$\partial_{p^t}:\O(U_c)^*\to H^1_{\et}(U_c,\Z/p^t\Z(1))$$ of the Kummer exact sequence of \'etale sheaves \begin{displaymath}
\xymatrix{
0 \ar[r] & \mu_{p^t} \ar[r] & \mathbb{G}_m \ar[r]^{(\cdot)^{p^t}} &\mathbb{G}_m \ar[r] & 0
}
\end{displaymath}
we get the element $\partial_{p^t}({_c\theta_E})\in H^1_{\et}(U_c,\Z/p^t\Z(1))$.

\begin{prop}\label{prop2.7}
The class ${_c\theta^{\et}_E}:=\varprojlim_t \partial_{p^t}({_c\theta_E})\in H^1_{\et}(U_c,\Z_{p}(1))$ satisfies the properties $P1,P2$ listed above.
\end{prop}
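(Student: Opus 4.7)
The plan is to derive both properties by reducing modulo $p^t$ and then transporting the three assertions of Theorem \ref{cohosiegel} along the Kummer boundary map, before finally passing to the inverse limit. Throughout I would regard $H^1_\et(U_c,\mu_{p^t})$ additively, and recall two basic functorialities of the Kummer sequence: (i) the boundary map $\partial_{p^t}:\O(X)^*\to H^1_\et(X,\mu_{p^t})$ is a group homomorphism, natural in $X$, in particular $\partial_{p^t}(f^*u)=f^*\partial_{p^t}(u)$ for a morphism $f$; and (ii) for a finite \'etale morphism $f:X\to Y$, the norm of units agrees with the trace on $H^1_\et$ under the Kummer map, i.e.\ $\mathrm{Tr}_f(\partial_{p^t}(u))=\partial_{p^t}(N_f(u))$ for every $u\in\O(X)^*$. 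Property (ii) is the standard compatibility between the trace $f_*$ on \'etale cohomology and the norm on $\mathbb{G}_m$ (both realising the same map $f_*\mathbb{G}_m\to\mathbb{G}_m$ of \'etale sheaves on $Y$), and it is already used implicitly in Kato's treatment.

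For \textbf{P1}, write the trace $\mathrm{Tr}_r$ associated to multiplication by $r$ as the composition of restriction along $U_{rc}\hookrightarrow U_c$ followed by the trace along the \'etale map $[r]\colon U_{rc}\to U_c$, just as in the Remark before Proposition \ref{prop2.7}. On units this composition is exactly the norm map $N_r$ appearing in Theorem \ref{theo1}(2). By functoriality (i) above, $\partial_{p^t}$ commutes with both restriction and trace; combining this with $N_r({_c\theta_E})={_c\theta_E}$ yields $\mathrm{Tr}_r(\partial_{p^t}({_c\theta_E}))=\partial_{p^t}({_c\theta_E})$ at each finite level $t$.

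For \textbf{P2}, start from Theorem \ref{theo1}(3): $_{c_1c_2}\theta_E = c_1^*({_{c_2}\theta_E})\cdot{_{c_1}\theta_E}^{c_2^2}$ in $\O(U_{c_1c_2})^*$. Applying the homomorphism $\partial_{p^t}$ converts the product into a sum and the $c_2^2$-th power into multiplication by $c_2^2$, while naturality turns the pullback by $[c_1]$ into $[c_1]^*$ on cohomology. This gives the first equality of P2 modulo $p^t$; swapping the roles of $c_1$ and $c_2$ (which is legitimate since $_{c_1c_2}\theta_E={_{c_2c_1}}\theta_E$) yields the second equality.

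Finally, one passes to the inverse limit over $t$. The transition maps $\Z/p^{t+1}\Z(1)\to\Z/p^t\Z(1)$ are compatible with the Kummer boundary maps, so the elements $\partial_{p^t}({_c\theta_E})$ form a coherent sequence defining ${_c\theta^{\et}_E}\in H^1_\et(U_c,\Z_p(1))$ in the sense of Definition \ref{defetcontcohom}. Both $\mathrm{Tr}_r$ and $[c_i]^*$ are induced from morphisms of pro-sheaves, so they commute with the limit and P1, P2 descend to the limit. There is no real obstacle here; the only point requiring care is the sheaf-theoretic identification in (ii) of trace with norm, and I would invoke it as a standard fact about \'etale cohomology of $\mathbb{G}_m$ rather than reproving it.
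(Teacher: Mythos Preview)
Your proof is correct and follows essentially the same approach as the paper: reduce to finite level, use the compatibility of the Kummer boundary map with pullback and with norm/trace (the paper states this as a commutative diagram between $\tilde{N}_r$ and $\operatorname{Tr}_r$), and apply the three properties of ${_c\theta_E}$ from Theorem~\ref{theo1}. Your writeup is slightly more explicit than the paper's about the passage to the inverse limit, but there is no substantive difference in strategy.
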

\begin{proof}
It is enough to show the statement at finite levels.
The units ${_c\theta_E}$ are invariant under norm maps $N_r$ associated to multiplication by $r$, for $r$ coprime with $c$. Hence, property $P1$ follows from the commutative diagram \begin{displaymath}
\xymatrix{
\O(U_c)^*\ar[r]^-{\partial_{p_t}} \ar[d]_{\tilde{N}_r} &  H^1_{\et}(U_c,\Z/p^t\Z(1)) \ar[d]^{\operatorname{Tr}_r} \\ 
\O(U_c)^*\ar[r]^-{\partial_{p_t}} &  H^1_{\et}(U_c,\Z/p^t\Z(1)), 
}
\end{displaymath}
where $\tilde{N}_r$ is the obtained by composing $N_r$ on the right by the restriction map $\O(U_c)^*\to \O(U_{rc})^*$.
Using the same diagram, property $(ii)$ follows from the fact that 
\begin{align}\label{siegelformuladist}
_{c_1 \cdot c_2}\theta_E=c_1^*({_{c_2}\theta_E}) \cdot {_{c_1}\theta_E}^{c_2^2}.
\end{align}
Formula \eqref{siegelformuladist} is obtained in \cite{kato} by noticing that both sides have the same divisor and that their ratio must be in $\O(S)^*$, hence it is equal to 1, by the norm compatibility property of the units $_c\theta_E$.
\end{proof} 
The \'etale Eisenstein classes are obtained as pull-back under torsion sections of these \'etale cohomology classes associated to the universal elliptic curve of $Y(N)$.  
\subsubsection{The \'etale Gysin sequence}
Before passing to the higher dimensional case, we explicitly calculate the \'etale residue of the class ${_c\theta^{\et}_E}$ in the case where $E/K$ is an elliptic curve over an algebraically closed field $K$; in particular, we characterise ${_c\theta^{\et}_E}$ as the only class which satisfies the property $P1$ described above, and, with \'etale residue $c^2(0)-E[c]$. Note that Theorem \ref{theo1} allows us to canonically choose a class in $H^1_{\et}(U_c,\Z_{p}(1))$ with the desired properties; however, it is possible to lift canonically a suitable multiple of $c^{2}(0)-E[c]$, as we will see in the next section, by invoking a theorem of Faltings (see Theorem \ref{thetheorem}).\newline 
Consider the \'etale Gysin exact sequence for $$E[c] \hookrightarrow E \hookleftarrow U_c.$$ It gives
\begin{displaymath}
\xymatrix{
0\ar[r] & 0 \ar[r] &
H^0_{\et}(E,\Z/p^t\Z(1))\ar[r]^-{\backsim} &
H^0_{\et}(U_c,\Z/p^t\Z(1)) \ar@{->} `r/8pt[d] `/15pt[l] `^dl[ll]|{} `^r/1pt[dll] [dll] \\  
 & 0 \ar[r] &
H^1_{\et}(E,\Z/p^t\Z(1)) \ar@{^{(}->}[r]
 &
H^1_{\et}(U_c,\Z/p^t\Z(1)) \ar@{->} `r/5pt[d] `/15pt[l] `^dl[lll]|{} `^r/2pt[dll] [dll] \\
 & H^0_{\et}(E[c],\Z/p^t\Z) \ar@{->>}[r] &
H^2_{\et}(E,\Z/p^t\Z(1)) \ar[r] & 0
}
\end{displaymath}
 Let $res_{\et}$ denote the map of the \'etale Gysin sequence (with $\Z_{p}$-coefficients) \[H^1_{\et}(U_c,\Z_{p}(1)) \longrightarrow H^0_{\et}(E[c],\Z_{p}).\]  
\begin{lemma}\label{ressiegel} The \'etale cohomology class ${_c\theta^{\et}_E}$ is the unique class fixed by $\operatorname{Tr}_r$, for $(r,c)=1$, such that \[ res_{\et}({_c\theta^{\et}_E})=c^2(0)-E[c]. \]
\end{lemma}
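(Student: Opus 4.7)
The plan is to proceed in two stages: first confirm $res_\et({_c\theta^{\et}_E}) = c^2(0)-E[c]$, and then prove that within $\ker(res_\et)$ no non-zero element is fixed by every $\operatorname{Tr}_r$ with $(r,c)=1$. The residue computation follows from the standard compatibility of the Kummer connecting map with the \'etale Gysin residue: the composition
\[
\O(U_c)^\times \xrightarrow{\partial_{p^t}} H^1_{\et}(U_c,\Z/p^t\Z(1)) \xrightarrow{res_\et} H^0_{\et}(E[c],\Z/p^t\Z)
\]
sends a unit $f$ to the reduction modulo $p^t$ of $\sum_{x\in E[c]} \operatorname{ord}_x(f)\cdot[x]$. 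Taking $f = {_c\theta_E}$ with divisor $c^2(0)-E[c]$ (Theorem \ref{theo1}) and passing to the inverse limit over $t$ gives the claim.

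For uniqueness, let $\alpha$ be any class with the stated properties and set $\delta := \alpha - {_c\theta^{\et}_E}$. Then $\delta\in\ker(res_\et)$ and $\operatorname{Tr}_r(\delta)=\delta$ for all $r$ with $(r,c)=1$. The \'etale Gysin sequence identifies $\ker(res_\et)$ with $j_c^* H^1_\et(E,\Z_p(1))$, where $j_c: U_c\hookrightarrow E$ is the open immersion, and Kummer theory on the proper smooth curve $E/K$ (with $\operatorname{char} K\neq p$) gives $H^1_\et(E,\Z_p(1))\cong T_p E$, a free $\Z_p$-module of rank two. The central step is therefore to show that $\operatorname{Tr}_r$ acts on $j_c^* H^1_\et(E,\Z_p(1))$ as multiplication by $r$. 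Let $[r]_E: E\to E$ and $[r]_U: U_{rc}\to U_c$ denote the multiplication-by-$r$ maps, and $\iota: U_{rc}\hookrightarrow U_c$, $j_{rc}: U_{rc}\hookrightarrow E$ the open immersions. The square
\[
\begin{array}{ccc}
U_{rc} & \stackrel{j_{rc}}{\longrightarrow} & E \\
{\scriptstyle [r]_U}\downarrow & & \downarrow{\scriptstyle [r]_E} \\
U_c & \stackrel{j_c}{\longrightarrow} & E
\end{array}
\]
is cartesian since $[r]_E^{-1}(U_c)=U_{rc}$, with finite \'etale vertical arrows. Base change yields $j_c^*\circ([r]_E)_* = ([r]_U)_*\circ j_{rc}^*$, so for $y\in H^1_\et(E,\Z_p(1))$,
\[
\operatorname{Tr}_r(j_c^* y) = ([r]_U)_*\,\iota^* j_c^* y = ([r]_U)_*\, j_{rc}^* y = j_c^*([r]_E)_* y,
\]
using $\iota^* j_c^* = j_{rc}^*$. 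From $([r]_E)_*\circ([r]_E)^* = \deg[r]_E = r^2$ and the standard fact that $([r]_E)^*=r$ on $T_pE$, we conclude $([r]_E)_* = r$ on $H^1_\et(E,\Z_p(1))$.

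Writing $\delta = j_c^* y$ with $y\in T_pE$, the fixed-point condition reads $(r-1)y = 0$ for every $r$ coprime to $c$. Choosing any such $r\neq 1$ (for example $r=1+c$) and using that $T_pE$ is torsion-free as a $\Z_p$-module forces $y=0$, hence $\delta=0$ and $\alpha={_c\theta^{\et}_E}$. The only real obstacle is the base change identity for the finite \'etale cartesian square above, which is standard; everything else reduces to Kummer theory for a smooth proper curve over an algebraically closed field and the injectivity of multiplication by a non-zero integer on $T_p E$.
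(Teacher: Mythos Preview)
Your proof is correct and follows the same approach as the paper: identify the residue of ${_c\theta^{\et}_E}$ via the compatibility of the Kummer boundary map with the divisor map, then deduce uniqueness from the fact that $\operatorname{Tr}_r$ acts as multiplication by $r$ on $\ker(res_\et)\cong H^1_\et(E,\Z_p(1))\cong T_pE$. The paper spells out the residue map more concretely (describing $H^1_\et(U_c,\Z/p^t\Z(1))$ in terms of line bundles with $p^t$-th power trivialisations) and simply asserts the action of $\operatorname{Tr}_r$ on the kernel, whereas you justify the latter carefully via the cartesian square and the identity $[r]_*[r]^*=r^2$; but the skeleton of the argument is identical.
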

\begin{proof}
The \'etale residue map can be explicitly described as follows.
Note that the quotient \[Q_t=H^1_{\et}(U_c,\Z/p^t\Z(1))/H^1_{\et}(E,\Z/p^t\Z(1))\]
can be seen as the group of degree 0 divisors with coefficients in $\Z/p^t\Z$ and supported on $E[c]$. Indeed, recall that $H^1_{\et}(E,\Z/p^t\Z(1))\simeq Pic^0(E)[p^t],$ while \begin{align*}
H^1_{\et}(U_c,\Z/p^t\Z(1)) &\simeq \left \{(\mathcal{L},f) | \mathcal{L} \in Pic(U_c), f:\mathcal{L}^{\otimes^{p^t}}\overset{\sim}{\longrightarrow} \O_{U_c} \right \}/\cong \\
&\simeq \left \{(\mathcal{L},D,f) | \mathcal{L} \in Pic^0(E), f:\mathcal{L}^{\otimes^{p^t}}\overset{\sim}{\longrightarrow} \O(D) \right \}/\langle (\O(D'),p^tD',1^{\otimes^{p^t}}) \rangle.
\end{align*}
where $D$ and $D'$ are degree 0 divisors supported on $E[c]$ (e.g. see \cite[\href{http://stacks.math.columbia.edu/tag/03RR}{Tag 03RR}]{stacksproj}). Hence, to each element of $Q_t$ we can associate a divisor with coefficients in $\Z/p^t\Z$ supported on the $c$-torsion points, and the \'etale residue map is described as \[res_{\et}:H^1_{\et}(U_c,\Z/p^t\Z(1)) \longrightarrow Q_t \hookrightarrow H^0_{\et}(E[c],\Z/p^t\Z), \quad (\mathcal{L},D,f) \mapsto D.\] Passing to the limit, we get a similar description for the quotient with $\Z_{p}$-coefficients. 
By Theorem \ref{theo1}, as an element of $Q_t$ the class $\partial_{p^t}({_c\theta_E})$ is the one associated to the degree 0 divisor $c^2(0)-E[c]$ (i.e. the associated class in the kernel of $H^0_{\et}(E[c],\Z/p^t\Z) \to H^2_{\et}(E,\Z/p^t\Z(1))$. Thus, we conclude that \[res_{\et}({_c\theta^{\et}_E})=c^2(0)-E[c].\]
To prove uniqueness, we use the fact that the Gysin sequence is equivariant for the action of $\operatorname{Tr}_r$. Indeed, suppose that there is $d_c \in H^1_{\et}(U_c,\Z_{p}(1))^{\operatorname{Tr}_r=1},$ for an integer $r$ prime to $c$, such that $res_{\et}(d_c)=c^2(0)-E[c]$; then, the difference ${_c\theta^{\et}_E}-d_c$ is fixed by $\operatorname{Tr}_r$ and lies in $H^1_{\et}(E,\Z_{p}(1))$. Since $\operatorname{Tr}_r$ acts as multiplication by $r$ on $H^1_{\et}(E,\Z_{p}(1))$, we conclude that \[ {_c\theta^{\et}_E}-d_c=0.\]
\end{proof}
\begin{remark}
The explicit description in Lemma \ref{ressiegel} of the \'etale residue map \[res_{\et}:H^1_{\et}(U_c,\Z_{p}(1)) \longrightarrow H^0_{\et}(E[c],\Z_{p}) \] builds a direct analogy with the motivic residue map of Definition \ref{residuemap}. Indeed, we have the commutative diagram \[ \xymatrix{ H^1_{mot}(U_c,\Z(1)) \ar[d]_{ \varprojlim_t \partial_{p^t}} \ar[rr]^{res} &  & H^0_{mot}(E[c],\Z)^{deg=0} \ar[d]_{r_{\et}} \\ H^1_{\et}(U_c,\Z_{p}(1)) \ar[rr]^{res_{\et}} & & H^0_{\et}(E[c],\Z_{p})^{deg=0},  } \] where $H^0_{\et}(E[c],\Z_{p})^{deg=0}:=\operatorname{Ker} \left( H^0_{\et}(E[c],\Z_{p}) \to H^2_{\et}(E,\Z_{p}(1))\right)$, and  $r_{\et}(D)$ is the \'etale characteristic class of $D$, simply defined by sending the divisor $D$ to itself (now seen as a divisor with coefficients in $\Z_{p}$). \end{remark}
\subsection{\'Etale cohomology classes in higher dimension}
In the following, we describe how Theorem \ref{theo1} generalises to the case of \'etale cohomology classes for abelian schemes of relative dimension $g$, following Faltings' construction in \cite[Section 3]{Faltings2005}. \\ 
 Let $\pi:A \longrightarrow S$ be an abelian scheme  of relative dimension $g$ and let $p$ be a prime invertible in $S$. For any integer $c \ne p$, let $U_c := A \smallsetminus A[c]$ with structure morphism $\pi_c: U_c \longrightarrow S$. Recall that the trace map associated to multiplication by $r$ prime to $c$ is the composition of \[\operatorname{Tr}_r:H_{\et}^{2g-1}(U_c,\Z_{p}(g)) \longrightarrow H_{\et}^{2g-1}(U_{rc},\Z_{p}(g)) \longrightarrow H_{\et}^{2g-1}(U_c,\Z_{p}(g)),\] where the first map is just restriction to $U_{rc}$ and the second is the trace map associated to $[r]$.
Then, in the case of abelian scheme $A/S$ of relative dimension $g$, Faltings constructs classes $$_cz^A_m \in H_{\et}^{2g-1}(U_c,\Z_{p}(g)),$$ which satisfy the direct modification of properties $P1,P2$, which were previously introduced in the case of elliptic curves: \begin{itemize}
\item[P1.] For any $r$ prime to $c$, $_cz^A_m$ is invariant under trace maps associated to multiplication by $r$, i.e. $$\operatorname{Tr}_r({_cz^A_m})={_cz^A_m};$$
\item[P2.] The classes ${_{c_1\cdot c_2}z}$, ${_{c_1}z^A_m}$ and $_{c_2}z^A_m$ are related by\[_{c_1\cdot c_2}z^A_m=[c_1]^*({_{c_2}z^A_m})+c_2^{2g}{_{c_1}z^A_m}=[c_2]^*({_{c_1}z^A_m})+c_1^{2g}{_{c_2}z^A_m}.\]
\end{itemize}
Furthermore, pulling them back under torsion sections $x \in A(S)$ of order prime to $c$, we obtain the classes  $$_cz_{m,x}^A \in  H_{\et}^{2g-1}(S,\Z_{p}(g)).$$ 
We first recall the construction of those classes and then describe their properties in the sections 2.3, 2.4 and 2.5.
Consider the \'etale Gysin sequence for $$A[c] \hookrightarrow A \hookleftarrow U_c.$$ 
It tells us how the direct image \'etale sheaves (on $S$) $R^i\pi_{c,*}(\Z/p^t\Z(g))$ are related to $R^i\pi_{*}(\Z/p^t\Z(g))$. Let $\mathcal{H}^j(A[c],\mathcal{F})$ denote the $j^{th}$ direct image sheaf of the sheaf $\mathcal{F}$ on $A[c]$.
\begin{prop}\label{gysin}
The sheaf $R^i\pi_{c,*}(\Z/p^t\Z(g))$ coincides with $R^i\pi_*(\Z/p^t\Z(g))$ for $i<2g-1$; moreover, we have the exact sequence  $$0 \longrightarrow R^{2g-1}\pi_{*}(\Z/p^t\Z(g))\longrightarrow R^{2g-1}\pi_{c,*}(\Z/p^t\Z(g)) \longrightarrow \mathcal{H}^0(A[c],\Z/p^t\Z) \longrightarrow \Z/p^t\Z.$$   
\end{prop}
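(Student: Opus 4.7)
The plan is to derive this from the localisation (Gysin) distinguished triangle attached to the pair
$$i:A[c]\hookrightarrow A\hookleftarrow U_c:j,$$
combined with absolute purity. Since $p$ is invertible on $S$ and $c$ is coprime to the residue characteristics in the setting of interest, $A[c]\to S$ is finite \'etale, so $i$ is a regular closed immersion of codimension $g$ between regular $S$-schemes. Gabber's absolute purity therefore gives $Ri^{!}(\Z/p^t\Z(g))\simeq\Z/p^t\Z[-2g]$, and the localisation triangle reads
$$i_{*}\Z/p^t\Z[-2g]\longrightarrow \Z/p^t\Z(g)\longrightarrow Rj_{*}\bigl(\Z/p^t\Z(g)|_{U_c}\bigr)\longrightarrow i_{*}\Z/p^t\Z[-2g+1].$$

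Next I would apply $R\pi_{*}$ and use the compositions $R\pi_{*}\circ i_{*}=R\pi_{A[c],*}$ and $R\pi_{*}\circ Rj_{*}=R\pi_{c,*}$ (both $i$ and $j$ being, respectively, a closed and an open immersion) to get a distinguished triangle of complexes on $S$ whose cohomology sheaves in degree $i$ fit into
$$R^{i-2g}\pi_{A[c],*}\Z/p^t\Z\longrightarrow R^{i}\pi_{*}\Z/p^t\Z(g)\longrightarrow R^{i}\pi_{c,*}\Z/p^t\Z(g)\longrightarrow R^{i-2g+1}\pi_{A[c],*}\Z/p^t\Z.$$
Because $\pi_{A[c]}\colon A[c]\to S$ is finite \'etale, its higher direct images vanish and $R^{0}\pi_{A[c],*}\Z/p^t\Z$ is precisely $\mathcal{H}^{0}(A[c],\Z/p^t\Z)$. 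Hence both flanking terms vanish when $i<2g-1$, giving the claimed identification $R^{i}\pi_{*}\Z/p^t\Z(g)\simeq R^{i}\pi_{c,*}\Z/p^t\Z(g)$ in this range, and when $i=2g-1$ the four-term sequence specialises to
$$0\longrightarrow R^{2g-1}\pi_{*}\Z/p^t\Z(g)\longrightarrow R^{2g-1}\pi_{c,*}\Z/p^t\Z(g)\longrightarrow \mathcal{H}^{0}(A[c],\Z/p^t\Z)\longrightarrow R^{2g}\pi_{*}\Z/p^t\Z(g).$$

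The final step is to identify the last term with $\Z/p^t\Z$, which is exactly the trace isomorphism $R^{2g}\pi_{*}\Z/p^t\Z(g)\simeq\Z/p^t\Z$ for the smooth proper morphism $\pi$ of relative dimension $g$. Substituting this into the sequence above yields the statement.

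The main obstacle is to invoke absolute purity in the right generality and to keep track of the Tate twist/shift bookkeeping that causes the vanishing ranges to land precisely at $i<2g-1$ and $i=2g-1$; once the Gysin triangle is set up correctly, everything else reduces to the fact that $A[c]\to S$ is finite \'etale and the standard trace identification of the top direct image of an abelian scheme.
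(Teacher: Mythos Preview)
Your argument is correct and is essentially the same as the paper's: both rest on the Gysin/purity sequence for the smooth pair $A[c]\hookrightarrow A$, and the identification of $R^{2g}\pi_*\Z/p^t\Z(g)$ with $\Z/p^t\Z$. The paper simply compresses this to ``check at geometric stalks and cite Milne VI.5.3--5.4'', whereas you unwind the localisation triangle explicitly; one minor point is that you do not need Gabber's absolute purity here (which would require the schemes themselves to be regular), only the more elementary cohomological purity for a smooth $S$-pair, since both $A$ and $A[c]$ are smooth over $S$.
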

\begin{proof}
The result can be checked at geometric stalks. Then, it follows from Corollary VI.5.3 and Remark VI.5.4,(b) of \cite{milne1980etale}.  
\end{proof}
\begin{remark}
Similarly, this holds for the sheaf $\Z_{p}$.
\end{remark}

Recall that the global sections $\mathcal{H}^0(A[c],\Z_{p})(S)$ are by the very definition identified with $\Z_{p}(A[c])$, since $\pi_{|_{A[c]}}:A[c]\to S$ is a finite \'etale map. We now define the characteristic class of $c^{2g}(0)-A[c]$.

\begin{definition}\label{dc} Let $e:S \to A[c]$ be the closed immersion defined by the unit section; it induces \[ e_*: \Z_{p}(S) \longrightarrow \Z_{p}(A[c])= \mathcal{H}^0(A[c],\Z_{p})(S). \] We define the characteristic class of $c^{2g}(0)-A[c]$ to be the global section 
\[ D_c:=c^{2g}e_*(1)-\pi_{|_{A[c]}}^*(1) \in  \mathcal{H}^0(A[c],\Z_{p})(S). \]
\end{definition}

From Proposition \ref{gysin}, we have that the image of 
\[R^{2g-1}\pi_{*}(\Z_{p}(g))\hookrightarrow R^{2g-1}\pi_{c,*}(\Z_{p}(g))\] is isomorphic to the kernel of \[ \phi:\mathcal{H}^0(A[c],\Z_{p})\longrightarrow \Z_{p}.\] The next lemma shows that the characteristic class $D_c$ defines a global section of the kernel of $\phi$, hence of the image of $R^{2g-1}\pi_{*}(\Z_{p}(g))\hookrightarrow R^{2g-1}\pi_{c,*}(\Z_{p}(g))$. 

\begin{lemma}
The global section $D_c \in \mathcal{H}^0(A[c],\Z_{p})(S)$ lies in the kernel of $\phi(S)$.
\end{lemma}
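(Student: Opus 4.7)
The plan is to identify the map $\phi$ explicitly as a degree (or trace/pushforward) map, and then compute its effect on each of the two summands of $D_c$.

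First, I would unpack what $\phi$ is. In the Gysin sequence of Proposition \ref{gysin} (with $\Z_p$-coefficients), the last arrow
\[ \phi : \mathcal{H}^0(A[c],\Z_{p}) \longrightarrow R^{2g}\pi_{*}(\Z_p(g)) \]
is the Gysin pushforward along the closed immersion $A[c] \hookrightarrow A$, and the target is identified with $\Z_p$ via the trace isomorphism for the smooth proper morphism $\pi : A \to S$ of relative dimension $g$. Since $\pi|_{A[c]} : A[c] \to S$ is finite \'etale, checking stalkwise (using that $A[c]$ becomes a disjoint union of $c^{2g}$ copies of $S$ after an \'etale base change) I would show that on global sections $\phi(S)$ is nothing but the \emph{degree} (or fibrewise sum) map
\[ \phi(S) : \Z_p(A[c]) \longrightarrow \Z_p, \qquad f \longmapsto \sum_{x \in A[c]/S} f(x). \]
This identification is the key input; it can be justified either by the projection formula applied to $e : S \hookrightarrow A$ and to $\pi|_{A[c]}$, or by reducing to the case $S = \mathrm{Spec}(K)$ with $K$ algebraically closed, where $\phi$ is literally the map sending a $\Z_p$-valued function on the finite set $A[c](K)$ to the sum of its values.

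Once $\phi$ is described as the degree, the rest is a direct computation on the two pieces defining $D_c$. The section $e_*(1)$ corresponds to the characteristic function of the zero section of $A[c]$, which has degree $1$, so $\phi(S)(c^{2g} e_*(1)) = c^{2g}$. On the other hand, $\pi|_{A[c]}^*(1)$ is the constant function $1$ on $A[c]$, and its degree equals the rank of $\pi|_{A[c]}$, namely $c^{2g}$. Hence
\[ \phi(S)(D_c) \;=\; c^{2g} \cdot 1 \;-\; c^{2g} \;=\; 0, \]
which gives the claim.

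I do not expect any serious obstacle: the only slightly delicate point is the identification of $\phi$ with the degree map, which relies on Poincar\'e duality for the abelian scheme $A/S$ together with the fact that $A[c]/S$ is finite \'etale of constant rank $c^{2g}$ (using $\gcd(c,p)=1$ and that $p$ is invertible on $S$). Everything else reduces to two counts of points.
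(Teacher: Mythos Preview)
Your proposal is correct and follows essentially the same approach as the paper: reduce to geometric fibres (an algebraically closed field), identify $\phi$ with the sum/degree map on $\Z_p^{c^{2g}}$ via Poincar\'e duality, and then compute $c^{2g}\cdot 1 - c^{2g} = 0$. The paper makes the Poincar\'e duality step slightly more explicit by writing down the dual map $\Z_p \to \Z_p^{c^{2g}}$, $a \mapsto (a,\dots,a)$, but the content is the same.
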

\begin{proof}
Since the kernel of $\phi$ is a sheaf, we can reduce to show that the restriction of $D_c$ to each geometric fibre is zero. Thus, suppose that $A$ is an abelian scheme over an algebraically closed field, then we want to check that $D_c$ lies in the kernel of the Gysin map $$\varphi:H^0_{\et}(A[c],\Z_{p})\longrightarrow H^{2g}_{\et}(A,\Z_{p}(g)).$$ Note that $H^0_{\et}(A[c],\Z_{p})$ is isomorphic to ${\Z_{p}}^{c^{2g}},$ while $H^{2g}_{\et}(A,\Z_{p}(g))$ is isomorphic to $\Z_{p}$. By Poincar\'e duality, the $\Z_{p}$-dual of $H^{2g}_{\et}(A,\Z_{p}(g))$ is $H^0_{\et}(A,\Z_{p})$ which is isomorphic to $\Z_{p}$ and the pairing is induced by the product structure of $\Z_{p}$. On the other hand, $H^0_{\et}(A[c],\Z_{p})$ is dual to itself with respect to the pairing induced by multiplication of vectors $$\langle \bullet,\bullet \rangle :\Z_{p}^{c^{2g}}=H^0_{\et}(A[c],\Z_{p}) \times H^0_{\et}(A[c],\Z_{p})=\Z_{p}^{c^{2g}} \longrightarrow \Z_{p}, \begin{pmatrix}
  a_{1} \\ a_{2} \\ \vdots \\ a_{c^{2g}} 
 \end{pmatrix}, \begin{pmatrix}
  b_{1} \\
  b_2 \\
  \vdots \\
  b_{c^{2g}} 
 \end{pmatrix}\mapsto a_1b_1+ \cdots a_{c^{2g}}b_{c^{2g}}.$$
Then, the dual map to $\varphi$ is $$\Z_{p}=H^0_{\et}(A,\Z_{p})\longrightarrow H^0_{\et}(A[c],\Z_{p})=\Z_{p}^{c^{2g}}, a\mapsto(a, a, \cdots,a ). $$
Hence, let $v=(a_{1},a_{2}, \cdots, a_{c^{2g}}) \in \Z_{p}^{c^{2g}}$ and $b \in \Z_{p}$, then $$\langle \varphi(v),b \rangle=\langle v,\begin{pmatrix}
  b, & b, & \cdots, & b 
 \end{pmatrix} \rangle=b \cdot \sum a_i,$$ which gives us the result claimed.
\end{proof}

 Unfortunately, we cannot immediately lift $D_c$ to $H^{2g-1}_{\et}(U_c,\Z_{p}(g))$. As we said before, in the case of $g=1$, $c^{2g}(0)-A[c]$ is an effective Cartier divisor and we can use this geometric information to lift the class and construct Siegel units. Hence, we have a canonical choice of a class (the image of $_c\theta_A$ under the Kummer map) in $H^1_{\et}(U_c,\Z_{p}(1))$ whose residue is $D_c$. Faltings proposes a "cohomological" method to overcome this obstacle.
\begin{lemma}\label{operatore} Let $m$ be an integer prime to $p$ and let $\operatorname{Tr}_m$ denote the trace map associated to multiplication by $m$ on $A$, then the product $$F_m:=\prod_{i=0}^{2g-1}(\operatorname{Tr}_m-m^{2g-i})$$ annihilates the truncated complex $\tau_{\leq 2g-1}(\R\pi_{*}(\Z_{p}(g)))$.
\end{lemma}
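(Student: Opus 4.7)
The plan is to reduce the vanishing of $F_m$ on the truncated complex to its vanishing on each cohomology sheaf $R^i\pi_*\Z_p(g)$, which is a direct calculation. For the cohomological computation: since $m$ is coprime to $p$, the isogeny $[m]\colon A \to A$ is finite \'etale of degree $m^{2g}$; at each geometric point $\bar{s}$ of $S$, the action of $[m]$ on the Tate module $T_pA_{\bar{s}}$ is multiplication by $m$, so $[m]^*$ acts as $m$ on the dual $\Z_p$-module $H^1_{\et}(A_{\bar{s}},\Z_p)$. The exterior-algebra decomposition $H^\bullet_{\et}(A_{\bar{s}},\Z_p) = \bigwedge^\bullet H^1_{\et}(A_{\bar{s}},\Z_p)$, valid for abelian varieties, then shows that $[m]^*$ acts on $R^i\pi_*\Z_p(g)$ as multiplication by $m^i$. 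Combined with the standard identity $[m]_* \circ [m]^* = \deg([m])\cdot \mathrm{id} = m^{2g}\cdot \mathrm{id}$, this forces $\operatorname{Tr}_m = [m]_*$ to act on $R^i\pi_*\Z_p(g)$ as multiplication by $m^{2g-i}$. In particular, the factor $(\operatorname{Tr}_m - m^{2g-i})$ annihilates $R^i\pi_*\Z_p(g)$ for every $0 \leq i \leq 2g-1$.

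Next, I would run a d\'evissage along the canonical truncation. Write $\mathcal{C} := \R\pi_*\Z_p(g)$ and $F_m^{(n)} := \prod_{i=0}^n(\operatorname{Tr}_m - m^{2g-i})$. The claim is that $F_m^{(n)} = 0$ as an endomorphism of $\tau_{\leq n}\mathcal{C}$ in the derived category, by induction on $n \in \{0,1,\dots,2g-1\}$. The base case $n=0$ is the computation just performed. For the inductive step, use the canonical distinguished triangle
\[
\tau_{\leq n-1}\mathcal{C} \longrightarrow \tau_{\leq n}\mathcal{C} \longrightarrow R^n\pi_*\Z_p(g)[-n] \xrightarrow{+1}.
\]
The endomorphism $(\operatorname{Tr}_m - m^{2g-n})$ of $\tau_{\leq n}\mathcal{C}$ kills the third term, so by the standard axiom of triangulated categories it factors through the natural inclusion $\tau_{\leq n-1}\mathcal{C} \hookrightarrow \tau_{\leq n}\mathcal{C}$. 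Because $\operatorname{Tr}_m$ descends from the $S$-morphism $[m]$ of $A$, it is compatible with this inclusion, and so is every polynomial in it; composing on the left with $F_m^{(n-1)}$ therefore reduces to the inductive hypothesis and yields $F_m^{(n)} = 0$ on $\tau_{\leq n}\mathcal{C}$.

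The main obstacle is the derived-categorical d\'evissage rather than the cohomological calculation, which is essentially formal. One needs that $\operatorname{Tr}_m$ acts functorially on every truncation $\tau_{\leq n}\mathcal{C}$ and that the lift of $(\operatorname{Tr}_m - m^{2g-n})$ across the distinguished triangle intertwines correctly with the subsequent action of $F_m^{(n-1)}$. Both follow from the fact that $\operatorname{Tr}_m$ arises from the morphism of schemes $[m]\colon A \to A$, so no choice of a specific representative complex of $\mathcal{C}$ is required, and the argument passes to the inverse system defining continuous \'etale cohomology level by level.
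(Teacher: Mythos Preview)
Your proof is correct and follows essentially the same approach as the paper: both compute that $\operatorname{Tr}_m$ acts on $R^i\pi_*\Z_p(g)$ as multiplication by $m^{2g-i}$ and deduce the vanishing on the truncated complex. The paper reaches the scalar action via the duality $R^i\pi_*\Z_p(g)\simeq \bigwedge^{2g-i}(\mathcal{T}_p A)^\vee$ rather than through the identity $[m]_*[m]^*=m^{2g}$, and it leaves the passage from cohomology sheaves to the truncated complex implicit; your inductive d\'evissage along the triangles $\tau_{\le n-1}\mathcal{C}\to\tau_{\le n}\mathcal{C}\to R^n\pi_*\Z_p(g)[-n]$ spells that step out in full.
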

\begin{proof} This is a direct consequence of the action of $\operatorname{Tr}_m$ in cohomology, which acts on the $i$-th degree cohomology as multiplication by $m^{2g-i}$. Indeed, note that \[R^i\pi_{*}(\Z_{p}(g)) \simeq R^{2g-i}\pi_{*}(\Z_{p}) \simeq  \bigwedge^{2g-i} R^{1}\pi_{*}(\Z_{p}) \simeq  \bigwedge^{2g-i} \left(\mathcal{T}_{p}(A) \right)^{\vee} \] where $\mathcal{T}_{p}(A)$ denotes the $p$-adic Tate module of A and $(\bullet)^{\vee}$ denotes its $\Z_{p}$-dual.
This follows by base change to geometric points and a duality statement for abelian varieties (\cite{milneabvar} Theorem 12.1). Thus, the result follows since $\operatorname{Tr}_m$ acts as multiplication by $m$ on $\mathcal{T}_{p}(A)$. \end{proof}

We now have the following result.

\begin{theorem}[\cite{Faltings2005}, Section 3]\label{thetheorem} Let $m$ be an integer coprime to $c$ and a generator of $\Z_{p}^*$ and define $N_m := \prod_{i=0}^{2g-1}(1-m^{2g-i})$. 
The class of $N_m^2(D_c)$ lifts canonically to $H^{2g-1}_{\et}(U_c,\Z_{p}(g))$.
\end{theorem}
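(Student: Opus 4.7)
The plan is to produce the canonical lift using the Gysin triangle and Leray spectral sequence, applying the operator $F_m$ from Lemma \ref{operatore} twice to remove two levels of non-canonicity.

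First, by the sheaf-level Gysin short exact sequence from Proposition \ref{gysin} together with $\phi(D_c)=0$ (the preceding lemma), the class $D_c$ admits a non-canonical lift $\widetilde{D}_c\in H^0(S, R^{2g-1}\pi_{c,*}\Z_p(g))$, determined modulo the subsheaf $R^{2g-1}\pi_*\Z_p(g)$. Since $[m]$ permutes $A[c]$ while fixing both the unit section and the whole group scheme, $[m]_*D_c=D_c$, whence $F_m(D_c)=\prod_{i=0}^{2g-1}(1-m^{2g-i})D_c=N_m\,D_c$. On the other hand, Lemma \ref{operatore} implies that $F_m$ annihilates $R^{2g-1}\pi_*\Z_p(g)$ (the factor $\operatorname{Tr}_m-m^1$ kills it). Consequently $F_m(\widetilde{D}_c)\in H^0(S, R^{2g-1}\pi_{c,*}\Z_p(g))$ is independent of the choice of $\widetilde{D}_c$, giving a canonical section with residue $N_m D_c$.

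Second, consider the Leray spectral sequence
\[
E_2^{p,q}=H^p(S, R^q\pi_{c,*}\Z_p(g)) \Longrightarrow H^{p+q}(U_c, \Z_p(g)).
\]
The differentials $d_r$ on $E_r^{0,2g-1}$ for $r\ge 2$ land in groups $H^r(S, R^{2g-r}\pi_{c,*}\Z_p(g))$, and by the sheaf Gysin identification $R^q\pi_{c,*}\Z_p(g)=R^q\pi_*\Z_p(g)$ for $q<2g-1$, Lemma \ref{operatore} forces $F_m$ to act as zero on these obstruction groups. Since $F_m$ commutes with the differentials, $F_m(\widetilde{D}_c)$ is a permanent cycle and lifts to some $z\in H^{2g-1}(U_c, \Z_p(g))$. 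The ambiguity in $z$ is the Leray filtration piece $F^1 H^{2g-1}(U_c, \Z_p(g))$, whose graded subquotients $H^r(S, R^{2g-1-r}\pi_{c,*}\Z_p(g))$ for $r\ge 1$ are likewise annihilated by $F_m$ (the exponent $2g-1-r$ is always $\le 2g-2$). Therefore $F_m(z)\in H^{2g-1}(U_c, \Z_p(g))$ is independent of the chosen lift and defines the desired canonical class, with residue $F_m(N_m D_c)=N_m^2\,D_c$ by naturality.

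The main obstacle is establishing $F_m$-equivariance throughout, particularly the action of $F_m$ on the subquotients $R^q\pi_{c,*}\Z_p(g)$, which uses the sheaf Gysin isomorphisms combined with the eigenvalue computation from the proof of Lemma \ref{operatore} (that $\operatorname{Tr}_m$ acts as $m^{2g-q}$ on $R^q\pi_*\Z_p(g)$). The factor $N_m^2$ precisely reflects the two levels of non-canonicity--the sheaf-level lift $\widetilde{D}_c$ and the hypercohomology lift $z$--each removed by one application of $F_m$; since $N_m$ is generally not a unit in $\Z_p$, the canonical class is only defined as an $N_m^2$-multiple. Once constructed, the canonical lift will be used in the next subsections to extract Faltings' properties $P1$ and $P2$ from the naturality of the Gysin triangle with respect to multiplication-by-$r$ maps and pullbacks.
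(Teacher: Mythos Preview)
Your two-step strategy (sheaf Gysin then Leray) is a natural idea, but as written it has a genuine gap and, once patched, costs more factors of $N_m$ than the stated $N_m^2$.

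The gap is in your first step. From the short exact sequence of sheaves
\[
0 \to R^{2g-1}\pi_*\Z_p(g) \to R^{2g-1}\pi_{c,*}\Z_p(g) \to \operatorname{Ker}\phi \to 0
\]
you only get a \emph{long} exact sequence on global sections, so $D_c\in H^0(S,\operatorname{Ker}\phi)$ need not lift to $H^0(S,R^{2g-1}\pi_{c,*}\Z_p(g))$: the obstruction is $\delta(D_c)\in H^1(S,R^{2g-1}\pi_*\Z_p(g))$, which has no reason to vanish over a general base. You can kill it with one $F_m$, but then your count becomes $N_m^3$ rather than $N_m^2$. A related imprecision occurs in your second step: knowing that $F_m$ annihilates each graded piece $H^r(S,R^{2g-1-r}\pi_*)$ of $F^1H^{2g-1}(U_c)$ does \emph{not} by itself give that a single $F_m$ kills $F^1$; for that you must invoke that $F_m$ is zero on the \emph{complex} $\tau_{\le 2g-2}\R\pi_{c,*}=\tau_{\le 2g-2}\R\pi_*$ (which is what Lemma~\ref{operatore} actually says), not merely on its cohomology sheaves.

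The paper avoids both issues by working with the global Gysin sequence directly,
\[
H^{2g-1}_{\et}(A,\Z_p(g)) \to H^{2g-1}_{\et}(U_c,\Z_p(g)) \to H^0_{\et}(A[c],\Z_p) \xrightarrow{\psi} H^{2g}_{\et}(A,\Z_p(g)),
\]
so that all obstructions to lifting are bundled into the single class $\psi(D_c)\in H^{2g}_{\et}(A,\Z_p(g))$, and all ambiguity into $H^{2g-1}_{\et}(A,\Z_p(g))$. The latter equals $\mathbb{H}^{2g-1}(S,\tau_{\le 2g-1}\R\pi_*\Z_p(g))$ and is therefore killed by one $F_m$. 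For the former, the paper uses Faltings' map of complexes $\phi_{2g}\colon R^{2g}\pi_*\Z_p(g)[-2g]\to \R\pi_*\Z_p(g)$ (inducing $(2g)!$ on $H^{2g}$) together with $F_m\circ\phi_{2g}=F_m\cdot(2g)!$ to deduce $F_m(\psi(D_c))=0$ from the sheaf-level vanishing $\phi(D_c)=0$. Thus $N_mD_c$ lifts, and one further $F_m$ makes the lift canonical, giving exactly $N_m^2$. If you want to salvage your Leray approach with the correct constant, you should replace the sheaf-by-sheaf reasoning with the complex-level statement of Lemma~\ref{operatore} throughout, and handle the obstruction $\psi(D_c)$ globally rather than one filtration step at a time.
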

\begin{proof} Here we give a sketch of the proof given in \cite{Faltings2005}. We have a surjective map \[ H^{2g-1}_{\et}(U_c,\Z_{p}(g)) \longrightarrow \operatorname{Ker}\left(H^{0}_{\et}(A[c],\Z_{p})\longrightarrow H^{2g}_{\et}(A,\Z_{p}(g)) \right),\] which comes from the \'etale Gysin exact sequence for $(A,U_c,A[c])$; denote by $\psi$ the map \[H^{0}_{\et}(A[c],\Z_{p})\longrightarrow H^{2g}_{\et}(A,\Z_{p}(g)).\]
First, we lift $D_c\in \operatorname{Ker}(\phi(S))$ to the kernel of $\psi$ to construct the desired class.  
To do so, Faltings constructs a map of complexes \[ \phi_{2g}:R^{2g}\pi_*\Z_{p}(g)[-2g] \longrightarrow \R\pi_*\Z_{p}(g) \] which is multiplication by $(2g)!$ in cohomology. Since, by Lemma \ref{operatore}, the operator \[F_m:=\prod_{i=0}^{2g-1}(\operatorname{Tr}_m-m^{2g-i})\] annihilates the truncated complex $\tau_{\leq 2g-1}(\R\pi_{*}(\Z_{p}(g)))$, then \[F_m \circ  \phi_{2g}=F_m \circ (2g)!.\]
 Let $\gamma$ be the generator of $R^{2g}\pi_*\Z_{p}(g)$ defined by the zero section of $A$; since $\operatorname{Tr}_m$ fixes $\gamma$, we have that  \[N_m \cdot  \phi_{2g}(\gamma)=N_m \cdot (2g)! (\gamma).\] 
Thus, $F_m \circ \psi(D_c)=N_m \cdot (2g)!  \phi(D_c)=0,$ because $ \phi(D_c)=0$ in $R^{2g}\pi_*\Z_{p}(g)(S)$. \newline Since $\operatorname{Tr}_m(D_c)=D_c$, $F_m$ acts on $D_c$ as multiplication by $N_m$. Hence, applying $F_m$ again, we get a canonical lift of $F_m(N_mD_c)=N_m^2D_c$ in  $H^{2g-1}_{\et}(U_c,\Z_{p}(g))$. 

\end{proof}
\begin{remark}\label{constantinvertible}
If the prime $p$ is sufficiently large, we can choose $m$ such that $N_m$ is invertible in $\Z_{p}$; it suffices to assume that $p > 2g+1$ and choose $m$ to be a generator of $(\Z/p\Z)^*$. 
\end{remark}
\begin{remark} The \'etale Gysin sequence is equivariant for the action of $\operatorname{Tr}_r$, thus we have a map \[ H^{2g-1}_{\et}(U_c,\Z_{p}(g))^{\operatorname{Tr}_r=1} \longrightarrow H^{0}_{\et}(A[c],\Z_{p})^{\operatorname{Tr}_r=1}.\] The dependence of Faltings' classes on the choice of $m$ arises from the obstruction to construct a section of this map if $g >1$.
\end{remark}
Finally, we can define classes over the base of the abelian scheme. 
\begin{definition}\label{defclasses}
Denote by $_cz_m^A$ the canonical lift of $N_m^2(D_c)$ to $H^{2g-1}_{\et}(U_c,\Z_{p}(g))$. Moreover, for any torsion section $x \in A(S)$ of order prime to $c$, we define the \'etale Eisenstein classes for $A/S$ as $$_cz_{m,x}^A := x^*{_cz_m^A}\in H^{2g-1}_{\et}(S,\Z_{p}(g)).$$ 
\end{definition}
Let $res_{\et}$ denote the (residue) map from the \'etale Gysin sequence  \[res_{\et}: H^{2g-1}_{\et}(U_c,\Z_{p}(g)) \longrightarrow H^{0}_{\et}(A[c],\Z_{p}),\] then \[res_{\et}({_cz_m^A})=N_m^2(D_c).\]
To ease the notation, we won't make explicit reference to $A$ and denote the classes by $_cz_{m,x}$ unless needed.

\subsection{Properties of the \'etale Eisenstein classes}
In the following subsection, we investigate the properties that the Eisenstein classes defined above satisfy. In particular, we show that they satisfy the two properties P1,P2 we mentioned above.
\begin{prop}\label{tracerel} Let $r$ be an integer coprime with $c$ and denote by $\operatorname{Tr}_r$ the trace map associated to the multiplication by $r:A\smallsetminus A[rc]\longrightarrow A\smallsetminus A[c],$ then $$\operatorname{Tr}_r({_cz_m})={_cz_m}.$$
\end{prop}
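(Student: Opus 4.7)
The plan is to reduce the claimed trace compatibility on $U_c$ to a trace compatibility of the residue $N_m^2 D_c$ on $A[c]$, and then to use the factorisation of ${_cz_m}$ through the operator $F_m$ to kill the remaining ambiguity in the lift.

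First I would compute $\operatorname{Tr}_r(D_c) = D_c$ in $H^0_{\et}(A[c], \Z_{p})$, where the trace on this group is the composite of extension-by-zero from $A[c]$ to $A[rc]$ with the trace along the \'etale degree-$r^{2g}$ map $[r]|_{A[rc]} \colon A[rc] \to A[c]$. Since $(r,c) = 1$, multiplication by $r$ restricts to a bijection on $A[c]$ with inverse $[r^{-1}]$ (where $r^{-1}$ is taken modulo $c$), so among the $r^{2g}$ preimages in $A[rc]$ of a point $x \in A[c]$, only $[r^{-1}]x$ lies in $A[c]$. Using $D_c(y) = c^{2g}\mathbf{1}[y = 0] - 1$, one gets $(\operatorname{Tr}_r D_c)(x) = D_c([r^{-1}]x) = D_c(x)$. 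By naturality of the \'etale Gysin sequence under \'etale morphisms, the residue map commutes with $\operatorname{Tr}_r$; hence $res_{\et}(\operatorname{Tr}_r({_cz_m})) = \operatorname{Tr}_r(N_m^2 D_c) = N_m^2 D_c$, so $\operatorname{Tr}_r({_cz_m}) - {_cz_m}$ lies in the image of $H^{2g-1}_{\et}(A, \Z_{p}(g)) \hookrightarrow H^{2g-1}_{\et}(U_c, \Z_{p}(g))$.

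To kill this ambiguity I would use the characterisation of ${_cz_m}$ as $F_m(w)$ for $w \in H^{2g-1}_{\et}(U_c, \Z_{p}(g))$ any lift of $N_m D_c$ under $res_{\et}$; this factorisation is implicit in the proof sketch of Theorem \ref{thetheorem}. Independence of the choice of $w$ follows because two such lifts differ by an element of the image of $H^{2g-1}_{\et}(A, \Z_{p}(g))$, which $F_m$ annihilates by Lemma \ref{operatore}. Since $\operatorname{Tr}_r$ and $\operatorname{Tr}_m$ are induced by the commuting endomorphisms $[r]$ and $[m]$ of $A$, we have $\operatorname{Tr}_r \circ F_m = F_m \circ \operatorname{Tr}_r$; and by the previous paragraph $\operatorname{Tr}_r(w)$ is again a lift of $N_m D_c$. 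Therefore $\operatorname{Tr}_r({_cz_m}) = F_m(\operatorname{Tr}_r(w)) = F_m(w) = {_cz_m}$, as required.

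The main obstacle is extracting the factorisation ${_cz_m} = F_m(w)$ in a form that shows it depends only on the residue class $N_m D_c$. This amounts to unpacking Faltings' construction of the canonical lift (one lifts $D_c$ arbitrarily to $U_c$, then $F_m$ of this produces a canonical lift of $N_m D_c$ by the annihilation property, and a further application of $F_m$ yields ${_cz_m}$); once the factorisation is in place, the remaining assembly is formal.
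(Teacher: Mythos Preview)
Your proof is correct and follows essentially the same approach as the paper: both arguments rest on the two facts that $\operatorname{Tr}_r$ fixes $D_c$ and that $\operatorname{Tr}_r$ commutes with the canonical lifting operator $F_m$, so that the canonical lift of $N_m^2 D_c$ is preserved. The paper's proof compresses this into one sentence (``$\operatorname{Tr}_r$ commutes with the operator induced by multiplication by $N_m$ and fixes $D_c$''), while you spell out explicitly the factorisation ${_cz_m} = F_m(w)$ and the independence of the choice of lift $w$; your version is more detailed but not a different argument.
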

\begin{proof}
The trace map $\operatorname{Tr}_r$ commutes with the operator induced by multiplication by $N_m$, hence the result follows from noticing that $\operatorname{Tr}_r$ fixes $D_c$.
\end{proof}
Notice that we can immediately extend this result for trace maps associated to isogenies whose degree is coprime with $c$.
\begin{cor}
Let $h:A \to A'$ be an isogeny over $S$ of degree prime to $c$, then $$\operatorname{Tr}_h({_cz_m^A})={_cz_m^{A'}}.$$
\end{cor}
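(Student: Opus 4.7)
Let $d := \deg h$ and $K := \ker h$; by hypothesis $\gcd(c,d) = 1$. In characteristic zero $h$ is finite \'etale, and one checks that $h^{-1}(A'[c]) = A[c] \oplus K$: the inclusion $\supseteq$ is clear, and if $ca \in K$ then writing $c'c \equiv 1 \pmod{d}$ we get $a = c'(ca) + (a - c'(ca)) \in K + A[c]$, with $K \cap A[c] = 0$ since the orders are coprime. Thus $h$ restricts to a finite \'etale cover $h\colon W := A \smallsetminus h^{-1}(A'[c]) \to U_c^{A'}$, and I interpret the statement as asserting that $\operatorname{Tr}_h$ applied to the image of $_cz_m^A$ under the restriction $H^{2g-1}_{\et}(U_c^{A},\Z_p(g)) \to H^{2g-1}_{\et}(W,\Z_p(g))$ equals $_cz_m^{A'}$.

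My plan is to imitate the proof of Proposition~\ref{tracerel}. Since $h$ is a morphism of group schemes, $h\circ[m] = [m]\circ h$, and functoriality of the \'etale trace under composition gives $\operatorname{Tr}_h \circ [m]_* = [m]_* \circ \operatorname{Tr}_h$. Therefore $\operatorname{Tr}_h$ commutes with the operator $F_m$ of Lemma~\ref{operatore} and, more generally, with the entire canonical lifting procedure of Theorem~\ref{thetheorem}, whose ingredients (the zero section, the map $\phi_{2g}$ built from it, and the operator $F_m$) are each compatible with $h$. It then suffices to check that $\operatorname{Tr}_h$ carries the characteristic class on $A$ to the one on $A'$ at the $H^0$-level.

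Write $\tilde D_c^{A} \in \mathcal{H}^0(h^{-1}(A'[c]),\Z_p)(S)$ for the extension by zero of $D_c^{A}$ from $A[c]$ to $A[c]\oplus K$; by functoriality of Gysin this is, up to the factor $N_m^2$, the residue of $_cz_m^A|_W$. A fibrewise computation of the trace then yields
\[
h_*(\tilde D_c^{A}) \;=\; h_*\bigl(c^{2g}e_*(1) - \pi_{|_{A[c]}}^*(1)\bigr) \;=\; c^{2g}e_*(1) - \pi_{|_{A'[c]}}^*(1) \;=\; D_c^{A'},
\]
because $h$ identifies the zero sections, $h|_{A[c]}\colon A[c]\to A'[c]$ is an isomorphism (orders match and $A[c]\cap K = 0$), and the contributions from $K\smallsetminus\{0\}$ in each fibre vanish as $\tilde D_c^A$ is zero there.

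Combining the two steps, $\operatorname{Tr}_h(_cz_m^A|_W)$ is the canonical Faltings lift of $N_m^2 D_c^{A'}$, which by definition is $_cz_m^{A'}$. The only delicate point is the compatibility of the canonical lifting of Theorem~\ref{thetheorem} with $\operatorname{Tr}_h$; this reduces to showing that the zero-section-based map of complexes $\phi_{2g}$ and the generator $\gamma$ of $R^{2g}\pi_*\Z_p(g)$ are transported correctly by $h_*$, which is automatic since $h$ identifies the zero sections of $A$ and $A'$ and is a morphism of abelian schemes.
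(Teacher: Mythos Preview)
Your proof is correct and follows the same approach as the paper. The paper itself gives essentially no proof of this corollary---it simply remarks, immediately after Proposition~\ref{tracerel}, that ``we can immediately extend this result for trace maps associated to isogenies whose degree is coprime with $c$'' and then states the corollary. Your write-up supplies the details the paper omits: the decomposition $h^{-1}(A'[c]) = A[c] \oplus K$, the fact that $h|_{A[c]}$ is an isomorphism onto $A'[c]$ so that $h_*$ carries $D_c^A$ to $D_c^{A'}$, and the compatibility of $\operatorname{Tr}_h$ with the operator $F_m$ (hence with the canonical lifting procedure). This is exactly the content implicit in the paper's one-line proof of Proposition~\ref{tracerel}, transported from $[r]$ to a general isogeny $h$.
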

Now, we prove the compatibility property P2 we stated above.
\begin{prop} Let $c_1,c_2$ be integers, then
$$_{c_1\cdot c_2}z=[c_1]^*({_{c_2}z_m})+c_2^{2g}({_{c_1}z_m})=[c_2]^*({_{c_1}z_m})+c_1^{2g}({_{c_2}z_m})\in H^{2g-1}_{\et}(U_{c_1c_2},\Z_{p}(g)).$$
\end{prop}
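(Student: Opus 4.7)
The plan is to show the first equality (the second follows by symmetry in $c_1,c_2$) by matching the images of both sides under the \'etale residue map $res_{\et}\colon H^{2g-1}_{\et}(U_{c_1 c_2},\Z_p(g))\to H^0_{\et}(A[c_1 c_2],\Z_p)$, and then invoking a uniqueness argument via trace invariance.

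For the residue computation, I would exploit functoriality of the \'etale Gysin sequence: applied to the Cartesian square relating $[c_1]\colon U_{c_1 c_2}\to U_{c_2}$ with $[c_1]\colon A[c_1 c_2]\to A[c_2]$, and to the open immersion $U_{c_1 c_2}\hookrightarrow U_{c_1}$, this gives
\[
res_{\et}\bigl([c_1]^*({_{c_2}z_m})\bigr) = N_m^2\,[c_1]^* D_{c_2}, \qquad res_{\et}\bigl(c_2^{2g}\,{_{c_1}z_m}\bigr) = c_2^{2g} N_m^2\,D_{c_1},
\]
where $D_{c_1}$ is viewed in $H^0_{\et}(A[c_1 c_2],\Z_p)$ via extension by zero along the inclusion $A[c_1]\hookrightarrow A[c_1 c_2]$. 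From Definition \ref{dc}, together with $[c_1]^{-1}(\{0\}) = A[c_1]$, one directly computes $[c_1]^* D_{c_2} = c_2^{2g}\mathbf{1}_{A[c_1]} - \mathbf{1}_{A[c_1 c_2]}$ and $D_{c_1} = c_1^{2g}\mathbf{1}_{\{0\}} - \mathbf{1}_{A[c_1]}$. Their weighted sum telescopes to $(c_1 c_2)^{2g}\mathbf{1}_{\{0\}} - \mathbf{1}_{A[c_1 c_2]} = D_{c_1 c_2}$, so both sides of the desired identity have residue $N_m^2 D_{c_1 c_2}$.

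By Proposition \ref{gysin}, the difference $\Delta := {_{c_1 c_2}z_m} - [c_1]^*({_{c_2}z_m}) - c_2^{2g}\,{_{c_1}z_m}$ therefore lies in the image of $H^{2g-1}_{\et}(A,\Z_p(g)) \hookrightarrow H^{2g-1}_{\et}(U_{c_1 c_2},\Z_p(g))$. Pick any integer $r$ coprime to $c_1 c_2$. By Proposition \ref{tracerel}, both $_{c_1 c_2}z_m$ and $_{c_1}z_m$ are fixed by $\operatorname{Tr}_r$, and since $[r]$ and $[c_1]$ commute on $A$, the same is true for $\operatorname{Tr}_r$ and $[c_1]^*$, whence $[c_1]^*({_{c_2}z_m})$ is fixed by $\operatorname{Tr}_r$ as well. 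Thus $\operatorname{Tr}_r(\Delta) = \Delta$. But $\operatorname{Tr}_r$ acts as multiplication by $r$ on $R^{2g-1}\pi_*\Z_p(g)$ by Lemma \ref{operatore}, so $(r-1)\Delta = 0$; choosing $r$ so that its reduction generates $(\Z/p\Z)^\times$ (possible under the assumption $p>2g+1$ of Remark \ref{constantinvertible}), the element $r-1$ is a unit in $\Z_p$ and therefore $\Delta = 0$. The main technical point is the residue computation—tracking the \'etale Gysin map carefully through the pullback $[c_1]^*$ and the restriction to the smaller open $U_{c_1 c_2}$; once these identifications are in place, the uniqueness via the weight argument is routine.
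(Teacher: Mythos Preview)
Your residue computation is correct and matches the paper's: both sides have residue $N_m^2 D_{c_1c_2}$, via the telescoping identity $[c_1]^*D_{c_2} + c_2^{2g}D_{c_1} = D_{c_1c_2}$.

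The gap is in your uniqueness step. You write that $\operatorname{Tr}_r$ acts as multiplication by $r$ on $R^{2g-1}\pi_*\Z_p(g)$ and conclude $(r-1)\Delta = 0$. But $\Delta$ lives in $H^{2g-1}_{\et}(A,\Z_p(g))$, not in $H^0(S,R^{2g-1}\pi_*\Z_p(g))$; the Leray filtration on $H^{2g-1}_{\et}(A,\Z_p(g))$ has graded pieces coming from $R^i\pi_*\Z_p(g)$ for all $0\le i\le 2g-1$, on which $\operatorname{Tr}_r$ acts by $r^{2g-i}$, not by the single scalar $r$. (Compare the paper's own argument in the elliptic-curve comparison, where it splits $H^1_{\et}(E,\Q_p(1))$ into two eigenspaces with eigenvalues $r$ and $r^2$.) What you actually get from Lemma \ref{operatore} is $F_r(\Delta)=0$; since $\operatorname{Tr}_r$ fixes $\Delta$, this yields $N_r\Delta=0$, not $(r-1)\Delta=0$. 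So your argument only concludes $\Delta=0$ when $N_r\in\Z_p^\times$, i.e.\ under the extra hypothesis $p>2g+1$ of Remark \ref{constantinvertible}.

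The paper's proof avoids this restriction by appealing directly to the \emph{canonicity} of Faltings' lift rather than to an eigenvalue argument. By construction, ${}_cz_m = F_m(\xi)$ for any $\xi$ with $res_{\et}(\xi)=N_mD_c$, and this is independent of the choice of $\xi$ because two such lifts differ by an element of $H^{2g-1}_{\et}(A,\Z_p(g))$, which $F_m$ annihilates. Since $[c_1]^*$ commutes with $F_m$ (as $[c_1]$ and $[m]$ commute), one has $[c_1]^*({}_{c_2}z_m)+c_2^{2g}\,{}_{c_1}z_m = F_m\bigl([c_1]^*\xi_2 + c_2^{2g}\xi_1\bigr)$, where the argument has residue $N_m D_{c_1c_2}$ by your computation. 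Hence this class \emph{is} the canonical Faltings lift of $N_m^2 D_{c_1c_2}$, i.e.\ it equals ${}_{c_1c_2}z_m$ on the nose. This is what the paper means by ``same residue, hence equal.''
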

\begin{proof}
We are reduced to studying what happens at the level of the characteristic classes of the $0$-cycles in $H^0_{\et}(A[c_1c_2],\Z_{p})$ we are lifting. Indeed, note that \begin{align*} [c_1]^*(c_2^{2g}e_*(1)-\pi_{|_{A[c_2]}}^*(1))+c_2^{2g}(c_1^{2g}e_*(1)-\pi_{|_{A[c_1]}}^*(1))&=D_{c_1c_2}+c_2^{2g}[c_1]^*e_*(1)-c_2^{2g}\pi_{|_{A[c_1]}}^*(1)= \\ &=D_{c_1c_2} \end{align*} hence $[c_1]^*({}_{c_2}z)+c_2^{2g}{}_{c_1}z_m$ and $_{c_1c_2}z_m$ have same residue and hence are equal.
\end{proof}
In the same fashion of the previous propositions, we can prove that the Eisenstein classes are invariant under base-change. For Siegel units, this property is well-known (e.g. \cite{kato}, Proposition 1.3(4) or \cite{scholl}, Theorem 1.2.1$(ii)$). 

\begin{prop}\label{basechrel} For any morphism $S' \to S$ and abelian scheme $A/S$, then $$g^*({_cz_m^A})={_cz_m^{A'}},$$ where $g:A'\to A$ is the base-change of $A$ to $S'$.
\end{prop}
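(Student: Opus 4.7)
The strategy is to check that every ingredient in Faltings' construction of $_cz_m^A$ is functorial under base change, so that pulling back along $g\colon A'\to A$ sends the canonical lift for $A/S$ to the canonical lift for $A'/S'$. I would organise this as follows.

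First I would observe that the formation of $U_c$ and $A[c]$ commutes with base change, and that the \'etale Gysin exact sequence attached to the triple $U_c\hookrightarrow A \hookleftarrow A[c]$ is natural in the base. Consequently, the residue map $res_{\et}\colon H^{2g-1}_{\et}(U_c,\Z_p(g))\to H^0_{\et}(A[c],\Z_p)$ fits in a commutative square with its counterpart for $A'/S'$ via the pullback by $g$. Next I would verify that the characteristic class $D_c$ of Definition \ref{dc} is base-change compatible: the unit section and the structure morphism commute with base change, and $\pi_{|A[c]}\colon A[c]\to S$ is finite \'etale, so both $e_*(1)$ and $\pi_{|A[c]}^*(1)$ pull back to their analogues for $A'$. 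Hence $g^* D_c^A = D_c^{A'}$ inside $H^0_{\et}(A'[c],\Z_p)$.

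I would then turn to the operators. The trace map $\operatorname{Tr}_m$ associated to the multiplication-by-$m$ isogeny is defined by a proper pushforward that commutes with flat base change, so $F_m = \prod_{i=0}^{2g-1}(\operatorname{Tr}_m - m^{2g-i})$ pulls back to the corresponding operator on $A'/S'$; in particular Lemma \ref{operatore} holds uniformly. The key remaining point is that Faltings' map of complexes $\phi_{2g}\colon R^{2g}\pi_*\Z_p(g)[-2g]\to R\pi_*\Z_p(g)$, as well as the fundamental class $\gamma$ of the zero section, are base-change compatible: this is because $\phi_{2g}$ is built out of the natural adjunction for $\pi$ and the class $\gamma$ arises from the cycle class of the identity section, both canonical operations. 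Therefore the construction producing the canonical preimage of $N_m^2 D_c$ in $H^{2g-1}_{\et}(U_c,\Z_p(g))$ commutes with the pullback $g^*$.

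Putting these compatibilities together, I would conclude as follows: the class $g^*{_cz_m^A}$ lies in $H^{2g-1}_{\et}(U'_c,\Z_p(g))$, has residue $g^*(N_m^2 D_c^A) = N_m^2 D_c^{A'}$, and coincides with the image of $N_m^2 D_c^{A'}$ under the very same canonical splitting procedure applied to $A'/S'$; by definition of $_cz_m^{A'}$, this forces $g^*({_cz_m^A}) = {_cz_m^{A'}}$. The only step that requires genuine care—rather than routine naturality—is the base-change compatibility of Faltings' auxiliary map $\phi_{2g}$; once that is pinned down (which should follow from its construction via the projection formula and the cycle class of the zero section), the rest of the proposition is formal.
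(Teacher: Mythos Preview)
Your proposal is correct and follows the same approach as the paper. The paper's own proof is a one-line appeal to the base-change compatibility of $D_c$ (``The result follows from the fact the base change of $D_c^A$ is $D_c^{A'}$''), leaving implicit exactly the naturality statements---for the Gysin sequence, the trace operators $\operatorname{Tr}_m$, and Faltings' map $\phi_{2g}$---that you have taken the trouble to spell out.
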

\begin{proof}
The result follows from the fact the base change of $D_c^A$ by is $D_c^{A'}$.
\end{proof}
We can now discuss the comparison between the image under the \'etale regulator of Siegel units and the \'etale classes defined in the previous section.
\begin{prop} Let $\pi:E\to S$ be an elliptic curve and let $N_m$ be as above; fix $c$ coprime with $6p$. Then, we have that $$N_m^2({_c\theta^{\et}_E})={_cz_{m}^{E}} \in H^1_{\et}(E \smallsetminus E[c],\Q_{p}(1)),$$ where ${_c\theta^{\et}_E}$ and ${_cz_{m}^{E}}$ are the classes defined respectively in Proposition \ref{prop2.7} and Definition \ref{defclasses}.
\end{prop}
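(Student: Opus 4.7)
The plan is to mimic the uniqueness argument of Lemma~\ref{ressiegel}: both $N_m^2 \cdot {}_c\theta^{\et}_E$ and ${}_cz_m^E$ live in $H^1_{\et}(U_c, \Q_p(1))$ and are fixed by the trace operators $\operatorname{Tr}_r$ for every integer $r$ coprime to $c$, so it will suffice to check that they have the same image under the \'etale residue map $\operatorname{res}_{\et} \colon H^1_{\et}(U_c, \Q_p(1)) \to H^0_{\et}(E[c], \Q_p)$. Their difference will then lie in $H^1_{\et}(E, \Q_p(1))$, and I will force it to vanish by exhibiting an $r$ such that $\operatorname{Tr}_r - 1$ is injective on this space.

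First I would match the residues. By Lemma~\ref{ressiegel} (formulated over algebraically closed fields, but extending immediately to the relative setting since ${}_c\theta_E$ arises from a global unit on $U_c$ with Cartier divisor $c^2(0) - E[c]$), one has $\operatorname{res}_{\et}({}_c\theta^{\et}_E) = D_c$ with $D_c$ as in Definition~\ref{dc}. Linearity gives $\operatorname{res}_{\et}(N_m^2 \cdot {}_c\theta^{\et}_E) = N_m^2 D_c$, which by Definition~\ref{defclasses} also equals $\operatorname{res}_{\et}({}_cz_m^E)$. Trace invariance for $N_m^2 \cdot {}_c\theta^{\et}_E$ is Proposition~\ref{prop2.7}, and for ${}_cz_m^E$ it is Proposition~\ref{tracerel}; thus the difference $\delta := N_m^2 \cdot {}_c\theta^{\et}_E - {}_cz_m^E$ sits in $H^1_{\et}(E, \Q_p(1))$ and is $\operatorname{Tr}_r$-invariant.

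To conclude $\delta = 0$, I would invoke the computation from the proof of Lemma~\ref{operatore}: the trace $\operatorname{Tr}_r$ acts on $R^q \pi_* \Q_p(1)$ as multiplication by $r^{2-q}$ for $q = 0, 1$. The Leray spectral sequence for $\pi \colon E \to S$ thus endows $H^1_{\et}(E, \Q_p(1))$ with a finite filtration whose graded pieces are acted upon by $\operatorname{Tr}_r$ via the scalars $r$ and $r^2$. Choosing any integer $r > 1$ coprime to $c$ (which always exists), the operator $\operatorname{Tr}_r - 1$ is multiplication by a nonzero element of $\Q_p$ on each graded piece, hence injective on the whole $\Q_p$-vector space; since $\delta$ is $\operatorname{Tr}_r$-fixed, this forces $\delta = 0$.

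The only non-automatic ingredient is the globalisation of the uniqueness part of Lemma~\ref{ressiegel} from the algebraically closed-field setting to the relative one, but this is routine given the Leray-theoretic description of the $\operatorname{Tr}_r$-action above, so I do not foresee a serious obstacle. An alternative and essentially equivalent route would be to apply Proposition~\ref{basechrel} together with the base-change compatibility of ${}_c\theta^{\et}_E$ to reduce the equality to each geometric fibre of $S$, where Lemma~\ref{ressiegel}'s uniqueness applies verbatim.
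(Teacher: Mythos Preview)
Your proposal is correct and follows essentially the same route as the paper: match the residues to see that the difference lies in $H^1_{\et}(E,\Q_p(1))$, then use the Leray spectral sequence together with the computation of $\operatorname{Tr}_r$ on $R^q\pi_*\Q_p(1)$ to kill any $\operatorname{Tr}_r$-fixed class there. The only cosmetic differences are that the paper routes the residue calculation through the operator $F_m$ (mirroring the construction of ${}_cz_m^E$) rather than simply invoking linearity of $\operatorname{res}_{\et}$ as you do, and that the paper asserts a direct-sum decomposition of $H^1_{\et}(E,\Q_p(1))$ where you more cautiously work with the Leray filtration; your version is arguably cleaner on both counts.
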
 
\begin{proof}

By the compatibility of \'etale and motivic Gysin sequences with the \'etale regulator, we have that the (\'etale) characteristic class of $res({_c\theta}_E)$ is equal to $res_{\et}({_c\theta^{\et}_E})$ (see also Lemma \ref{ressiegel}). Thus, by Theorem \ref{theo1}, $N_m({_c\theta^{\et}_E})$ has residue $N_mD_c$. Apply now the operator $F_m$ to $N_m({_c\theta^{\et}_E})$; $F_m$ acts as multiplication by $N_m$ on ${_c\theta^{\et}_E}$, since $\operatorname{Tr}_m$ fixes ${_c\theta^{\et}_E}$ (by Proposition \ref{prop2.7}). Thus, $N_m^2({_c\theta^{\et}_E})$ has residue $N_m^2D_c$. It follows that the difference $N_m^2({_c\theta^{\et}_E})-{_cz_{m}^{E}}$ lies in the kernel of the residue map and thus comes from an element \[t_m \in  H^1_{\et}(E,\Z_{p}(1)),\] which is fixed by $\operatorname{Tr}_r$, for any $r$ coprime to $c$. We are left to show that $t_m$ is torsion. Indeed, after tensoring by $\Q_p$, the Leray spectral sequence gives an isomorphism \[  H^1_{\et}(E,\Q_{p}(1))= H^1_{\et}(S,R^0\pi_*\Q_{p}(1)) \oplus H^0_{\et}(S,R^1\pi_*\Q_{p}(1)), \] where $\operatorname{Tr}_r$ acts as multiplication by $r^2$ and by $r$ on each summand.  Thus $t_m=0$ in $H^1_{\et}(E,\Q_{p}(1))$. 
\end{proof}

\subsection{Compatibility in the mira-Klingen tower} \label{miraklingen}

In the following we study how the \'etale Eisenstein classes of symplectic Shimura varieties vary in the tower of levels $U_1(N)$; fix an integer $D$ coprime with $N$, and consider any sufficiently small open compact subgroup \[ K_D=\GSp_{2g}(\widehat{\Z}^{(D)}) \cdot \prod_{p \mid D}K^{p}_D \subset \GSp_{2g}(\widehat{\Z}).\]  We define the following congruence subgroup. 
\begin{definition}\label{newdeflev} Let $U_N=\prod_{p}U^{p}_N \subset \GSp_{2g}(\widehat{\Z})$ be the open compact subgroup defined by  \[U_N^{p}:=\begin{cases} U_1(p^{v_{p}(N)}) & \text{ if }p \mid N \\ K_D^{p} & \text{ if } p \mid D \\ G(\Z_{p}) & \text{ otherwise.}\end{cases}\]
\end{definition}

Since $U_N$ is sufficiently small, the Shimura variety $Y_{\GSp_{2g}}(U_N)$ is the fine moduli space of isomorphism classes of quartuples $(A,\lambda,\alpha_D,P)$, where $A$ is an abelian scheme of relative dimension $g$, $\lambda$ is a principal polarisation on $A$, $\alpha_D$ is a symplectic level $K_D$-structure and $P$ is a point of exact order $N$ of $A$. Its universal abelian scheme $\mathcal{A}_g(U_N) \to Y_{\GSp_{2g}}(U_N)$ comes equipped with the universal $N$-torsion section $e_{N}$. 
 Let now $\ell$ be a prime which does not divide $D$. We show how the \'etale Eisenstein classes of Definition \ref{defclasses} behave under the trace $\operatorname{Tr}_{\pi_\ell}$ of \[ \pi_\ell: Y_{\GSp_{2g}}(U_{N\ell})   \longrightarrow Y_{\GSp_{2g}}(U_N), \; \; (A,\lambda, \alpha_{D}, P) \mapsto (A,\lambda, \alpha_{D}, [\ell]P) \] where $[\ell]$ denotes multiplication-by-$\ell$. To do so, we prove the following.
\begin{lemma}\label{caseatp} We have the following.  \begin{enumerate}
\item Suppose that $\ell \nmid N$. If $x\in \mathcal{A}_g(U_N)(Y_{\GSp_{2g}}(U_N))$ is a section of order prime to $\ell$, we have
\begin{equation}\label{diagkl1}
\xymatrix{ Y_{\GSp_{2g}}(U_N) \sqcup Y_{\GSp_{2g}}(U_{N\ell}) \ar[rrr]^-{(x,f)}\ar[d]^{(id,\pi_\ell)}& & &\mathcal{A}_g(U_N) \ar[d]^{[\ell]} \\ 
Y_{\GSp_{2g}}(U_N) \ar[rrr]^-{\ell x} & &  & \mathcal{A}_g(U_N)
}
\end{equation}
is Cartesian, where $f:(A,\lambda, \alpha_D,P)\mapsto (A, \lambda, \alpha_D, x_A+P)$, with $x_A$ equal to the base-change of $x$ to $A$.
\item In the case when $\ell \mid N$, the diagram

\begin{equation}\label{diagkl2}
\xymatrix{Y_{\GSp_{2g}}(U_{N\ell})  \ar[rrr]^-{(id \times \pi_\ell)\circ e_{N\ell}}\ar[d]^{\pi_\ell}& & & \mathcal{A}_g(U_N) \ar[d]^{[\ell]} \\ 
Y_{\GSp_{2g}}(U_N)  \ar[rrr]^-{e_{N}} & & & \mathcal{A}_g(U_N)
}
\end{equation}
is Cartesian.
\end{enumerate}
\end{lemma}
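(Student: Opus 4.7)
The strategy is moduli-theoretic. Since the level structures $U_N$ and $U_{N\ell}$ are sufficiently small, every scheme in the diagrams is a fine moduli space; it therefore suffices, for each $\mathbb{Q}$-scheme $T$, to exhibit a functorial bijection between the $T$-points of the fibered product and those of the proposed source, and then to check that the two natural projections out of the fibered product agree with the declared horizontal and vertical maps.

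For part $(2)$, a $T$-point of the fibered product against $e_N$ and $[\ell]$ is a tuple $(A, \lambda, \alpha_D, P', s)$ with $(A, \lambda, \alpha_D, P') \in Y_{\GSp_{2g}}(U_N)(T)$ and a section $s \in A(T)$ satisfying $\ell s = P'$. Writing $N = \ell^{a} M$ with $\gcd(M, \ell) = 1$ and $a \geq 1$, and using the canonical decomposition of torsion $A[N\ell] = A[\ell^{a+1}] \oplus A[M]$, the equation $\ell s = P'$ uniquely solves the prime-to-$\ell$ component ($s_M = \ell^{-1} P'_M$) and pins down the $\ell$-primary component $s_\ell$ to have exact order $\ell^{a+1}$, since $P'_\ell$ has exact order $\ell^{a} \geq \ell$. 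Hence $s$ has exact order $N\ell$, and $(A, \lambda, \alpha_D, s) \in Y_{\GSp_{2g}}(U_{N\ell})(T)$; the inverse assignment $(A, \lambda, \alpha_D, P) \mapsto ((A, \lambda, \alpha_D, \ell P), P)$ is immediate and recovers the stated top horizontal map.

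For part $(1)$, a $T$-point of the fibered product against $\ell x$ and $[\ell]$ is $((A, \lambda, \alpha_D, P'), s)$ with $\ell s = \ell x_A$, equivalently $Q := s - x_A \in A[\ell](T)$. Since $\ell$ is prime and $A[\ell] \to Y_{\GSp_{2g}}(U_N)$ is finite \'etale, there is an open-and-closed decomposition $A[\ell] = \{0\} \sqcup A[\ell]^{\times}$ into the zero section and the subscheme of sections of exact order $\ell$, which partitions the fibered product into two pieces. The piece $Q = 0$ is $Y_{\GSp_{2g}}(U_N)$ mapped in by $(\mathrm{id}, x)$. For the piece $Q \neq 0$, the canonical decomposition $A[N\ell] = A[N] \oplus A[\ell]$ (valid because $\gcd(N, \ell) = 1$) supplies a bijection with $Y_{\GSp_{2g}}(U_{N\ell})$: the plan is to send $((A, \lambda, \alpha_D, P'), Q)$ to the $N\ell$-torsion point $P := rP' + Q$ where $r \equiv \ell^{-1} \pmod N$, for which $\ell P = P'$ (of exact order $N$) and $NP = NQ \neq 0$ together force $P$ to have exact order $N\ell$; conversely, a $Y_{N\ell}$-point $(A, \lambda, \alpha_D, P)$ with splitting $P = P_N + P_\ell$ goes to $((A, \lambda, \alpha_D, \ell P_N), P_\ell)$, and a direct check confirms these are mutually inverse.

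The main bookkeeping hurdle is that $\pi_\ell$ acts on the $N\ell$-torsion marker by multiplication by $\ell$ rather than by projection onto the $N$-primary summand, so the $N$-torsion marker appearing on the $Y_N$-side under the bijection of case $(1)$ is $\ell P_N$ (not $P_N$), and the twist by $r = \ell^{-1} \bmod N$ is needed to invert this in the reconstruction. Once this is tracked, one verifies that the projection to $Y_{\GSp_{2g}}(U_N)$ out of the fibered product agrees with $(\mathrm{id}, \pi_\ell)$ and the projection to $\mathcal{A}_g(U_N)$ agrees with $(x, f)$, with $f$ sending $(A, \lambda, \alpha_D, P) \in Y_{\GSp_{2g}}(U_{N\ell})$ to the $\mathcal{A}_g(U_N)$-point with underlying $Y_N$-datum $(A, \lambda, \alpha_D, \ell P)$ and section $x_A + P_\ell$.
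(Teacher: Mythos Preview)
Your argument is correct and follows the same moduli-theoretic strategy as the paper: verify Cartesianness on $T$-points using the moduli interpretation of $\mathcal{A}_g(U_N)$ as pairs (a $Y_N$-point, a section of the corresponding abelian scheme), and analyse the fibre of $[\ell]$ over the given section. Your treatment is in fact more careful than the paper's sketch, which silently specialises to the case $x = r e_N$ (so that $\ell x = e_N$) actually needed in Proposition~\ref{tracecompfalt}.

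One point worth flagging: in your final paragraph you identify the section component of $f$ as $x_A + P_\ell$, whereas the statement writes $x_A + P$. Your formula is the one that makes the square commute (since $\ell(x_A + P_\ell) = \ell x_A$ but $\ell(x_A + P) = \ell x_A + \ell P \neq \ell x_A$), so either the paper's notation is a shorthand in which ``$P$'' denotes only the $\ell$-primary part of the $U_{N\ell}$-level datum, or there is a minor imprecision in the displayed formula. In the application ($x = r e_N$) one has $x_A = r\ell P = P_N$, so $x_A + P_\ell = P$ and the map $f$ simply becomes $(A,\lambda,\alpha_D,P) \mapsto ((A,\lambda,\alpha_D,\ell P), P)$, exactly parallel to the top arrow $(id \times \pi_\ell)\circ e_{N\ell}$ in diagram~\eqref{diagkl2}; this is consistent with how the paper's own proof describes the preimages as $(a, r e_{N,a} + y)$.
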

\begin{proof}
The proof is similar to the one in the case of modular curves, as in \cite[Lemma 2.3.1]{scholl}. Recall that the universal abelian scheme $\mathcal{A}_g(U_N)/ Y_{\GSp_{2g}}(U_N)$ has a moduli parametrisation as well; indeed, for a $\Q$-scheme $S$, $\mathcal{A}_g(U_N)(S)$ is the set of pairs $(a,P)$, where $a\in Y_{\GSp_{2g}}(U_N)(S)$ and $P$ is a point in the abelian scheme $A_a/S$ defined by $a$. We check that the diagram is Cartesian by proving it for the $S$-points (i.e. in the category of Sets). This follows from the following observations.
Denote by $\bullet_a$ the base-change of $\bullet \in \mathcal{A}_g(U_N)(Y_{\GSp_{2g}}(U_N))$ to $A_a$.
\begin{enumerate}
\item Let $\ell \nmid N$. The morphism $e_N$ sends $a \in Y_{\GSp_{2g}}(U_N)(S)$ to $(a, e_{N,a})\in \mathcal{A}_g(U_N)(S).$ The pre-image of $e_{N,a}$ under multiplication by $\ell$ has either the exact order $N$ or $N\ell$. In the first case, the pre-image of $(a,e_{N,a})$ is of the form $(a, r e_{N,a})$, with $r$ the inverse of $\ell$ modulo $N$, hence it defines a point in $Y_{\GSp_{2g}}(U_N)(S)$. In the second, it will be of the form $(a, r e_{N,a}+y)$, for $y$ point of exact order $\ell$ of $A_a$. Using the universal property of $e_{N\ell }$, we get a $S$-point of $Y_{\GSp_{2g}}(U_{N\ell})$. 
 \item As above, the morphism $e_{N}$ sends $a \mapsto (a,e_{N,a})\in \mathcal{A}_g(U_N)(S)$.  Its pre-image under multiplication by $\ell$ has necessarily exact order $N\ell$, thus it defines an $S$-point of $Y_{\GSp_{2g}}(U_{N\ell})$. 
\end{enumerate} 
\end{proof}

\begin{definition}
Denote by $\Eis_{m,N}^g$ the pull-back of the class ${_cz_{m}^{\mathcal{A}_g(U_N)}}$ by the universal $N$-torsion section $e_{N}$:
\[ \Eis_{m,N}^g \in H_{\et}^{2g-1}(Y_{\GSp_{2g}}(U_N) ,\Z_{p}(g)).\]  
\end{definition}
 We now state the trace compatibility relations under the trace $\operatorname{Tr}_{\pi_\ell}$.
\begin{prop}\label{tracecompfalt} Let $c$ be an integer coprime with $6Np\ell$. We have \[\operatorname{Tr}_{\pi_\ell}(\Eis_{m,N\ell}^g) = \begin{cases} \Eis_{m,N}^g  & \text{ if }\ell \mid N; \\ 
\Eis_{m,N}^g- (\begin{smallmatrix} r I & 0 \\ 0 & r I \end{smallmatrix})^*(\Eis_{m,N}^g) & \text{ if } \ell \nmid N; \end{cases}\]
 where $r$ denotes the inverse of $N$ modulo $\ell$.
\end{prop}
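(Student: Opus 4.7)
My plan is to follow Scholl's strategy from the $g=1$ case (Theorem 1.2.1 of \cite{scholl}): since $\Eis_{m,N}^g = e_N^* ({_cz_m^{\mathcal{A}_g(U_N)}})$ for $e_N$ the universal $N$-torsion section, both sides of the desired relation can be realised as pullbacks of the single class ${_cz_m^{\mathcal{A}_g(U_N)}}$, and the Cartesian squares of Lemma \ref{caseatp} allow trace and pullback to be interchanged via proper base change. The two main inputs are trace invariance (Proposition \ref{tracerel}) and base-change invariance (Proposition \ref{basechrel}) of Faltings' classes.

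When $\ell \mid N$, I apply proper base change to the square \eqref{diagkl2} to obtain the identity $e_N^* \circ \operatorname{Tr}_{[\ell]} = \operatorname{Tr}_{\pi_\ell} \circ ((id \times \pi_\ell) \circ e_{N\ell})^*$, and evaluate both sides on ${_cz_m^{\mathcal{A}_g(U_N)}}$. The left-hand side reduces to $\Eis_{m,N}^g$ by Proposition \ref{tracerel} (using $(c,\ell)=1$). For the right-hand side, factoring the pullback as $e_{N\ell}^* \circ (id \times \pi_\ell)^*$ and invoking Proposition \ref{basechrel} to identify $(id \times \pi_\ell)^* ({_cz_m^{\mathcal{A}_g(U_N)}}) = {_cz_m^{\mathcal{A}_g(U_{N\ell})}}$ produces $\operatorname{Tr}_{\pi_\ell}(\Eis_{m,N\ell}^g)$, giving the first case.

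When $\ell \nmid N$, I take $x := r e_N$ in Lemma \ref{caseatp}(1), so that $\ell x = e_N$. A direct moduli-theoretic unwinding of the fibre product in \eqref{diagkl1} shows that the $Y_{\GSp_{2g}}(U_N)$-component of the top map is $r e_N$ itself, while its $Y_{\GSp_{2g}}(U_{N\ell})$-component factors through the natural projection $\mathcal{A}_g(U_{N\ell}) \to \mathcal{A}_g(U_N)$ via the tautological section $e_{N\ell}$: indeed, the fibre product parametrises lifts of $P_N$ along $[\ell]$, of which one is the unique $N$-torsion lift $r P_N$ while the remainder have exact order $N\ell$. Applying proper base change together with Propositions \ref{tracerel} and \ref{basechrel} as in the first case then yields
\[ \Eis_{m,N}^g \;=\; (r e_N)^*({_cz_m^{\mathcal{A}_g(U_N)}}) \;+\; \operatorname{Tr}_{\pi_\ell}(\Eis_{m,N\ell}^g). \]
To conclude, I identify $(r e_N)^* ({_cz_m^{\mathcal{A}_g(U_N)}})$ with the Hecke-matrix pullback: the matrix $(\begin{smallmatrix} rI & 0 \\ 0 & rI \end{smallmatrix})$ induces the automorphism $\phi_r$ of $Y_{\GSp_{2g}}(U_N)$ sending $(A, \lambda, \alpha_D, P) \mapsto (A, \lambda, \alpha_D, rP)$; since $\phi_r$ does not alter the underlying abelian scheme, under the canonical identification of $\phi_r^* \mathcal{A}_g(U_N)$ with $\mathcal{A}_g(U_N)$ one has $\phi_r^* e_N = r e_N$, and base-change invariance then gives $(\begin{smallmatrix} rI & 0 \\ 0 & rI \end{smallmatrix})^* \Eis_{m,N}^g = (r e_N)^* ({_cz_m^{\mathcal{A}_g(U_N)}})$.

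The main technical point is ensuring that proper base change $g^* \operatorname{Tr}_f = \operatorname{Tr}_{f'} (g')^*$ holds for the finite \'etale Cartesian squares of Lemma \ref{caseatp} at the level of continuous $\Z_p$-coefficient \'etale cohomology; this is obtained by passing to the inverse limit from the standard finite-coefficient statement, as the relevant cohomology groups are finitely generated $\Z_p$-modules (cf.\ Remark \ref{nicepropofcohom}). A secondary, more bookkeeping-type subtlety is the moduli-theoretic identification of the $Y_{\GSp_{2g}}(U_{N\ell})$-component of the map $(x,f)$ in \eqref{diagkl1} as factoring through the tautological section $e_{N\ell}$ once one passes up to $\mathcal{A}_g(U_{N\ell})$.
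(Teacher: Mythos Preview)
Your proposal is correct and follows essentially the same approach as the paper's proof: both arguments combine the Cartesian squares of Lemma~\ref{caseatp} with proper base change, together with trace invariance (Proposition~\ref{tracerel}) and base-change invariance (Proposition~\ref{basechrel}) of Faltings' classes. Your write-up is in fact more detailed than the paper's, which only spells out the $\ell\mid N$ case and leaves the explicit identification of $(re_N)^*$ with the action of $\bigl(\begin{smallmatrix} rI & 0 \\ 0 & rI \end{smallmatrix}\bigr)$ implicit; your moduli-theoretic justification of that step and your remark on passing to $\Z_p$-coefficients are useful additions.
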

\begin{proof}
The computation follows by using Lemma \ref{caseatp} and the fact that the \'etale Eisenstein classes are invariant under trace maps $\operatorname{Tr}_{\ell}$ and under base-change (since the \'etale Gysin sequence enjoys this property). For instance, if $\ell \mid N$, the Cartesianness of the diagram \eqref{diagkl2} gives \begin{align*} Tr_{\pi_\ell}(\Eis_{m,N\ell}^g) =& Tr_{\pi_\ell}\left(e_{N\ell}^*\left( {_cz_{m}^{\mathcal{A}_g(U_{N\ell})}}\right) \right) \\ =& Tr_{\pi_\ell}\left(e_{N\ell}^*\left( (id\times \pi_\ell)^*\left({_cz_{m}^{\mathcal{A}_g(U_N)}}\right)\right) \right)  \quad& \text{  Prop. \ref{basechrel}} \\  
=& e_N^*\left(\operatorname{Tr}_\ell\left({_cz_{m}^{\mathcal{A}_g(U_{N})}}\right)\right) \quad &\text{   \eqref{diagkl2}}\\ =& e_N^*\left({_cz_{m}^{\mathcal{A}_g(U_N)}}\right) \quad& \text{  Prop. \ref{tracerel}}\\ =& \Eis_{m,N}^g \end{align*}  In the case of $N$ coprime with $\ell$, we use the Cartesianness of the diagram \eqref{diagkl1} for $x=re_N$. 
\end{proof} 
This proposition is the higher dimension analogue to a result of Kato for Siegel units which appears in the proof of Propositions 2.3 and 2.4 of \cite{kato}.  

\begin{remark}
The same method gives trace-compatibility relations for the Eisenstein classes of \cite{padicinterpolation} in the cohomology group $H_{\et}^{2g-1}(Y_{\GSp_{2g}}(U_N), {\rm TSym}^k (\mathcal{H}_{\Z_{p}})(g))$, with coefficients in the $k$-th symmetric tensors of the $p$-adic Tate module of  $\mathcal{A}_g(U_{N})$. We refer to \cite[\S 3.4]{phdthesis} for further details.
\end{remark}

As a corollary of Proposition \ref{tracecompfalt}, we get a trace-compatibility relation for the push-forward of these classes to any Shimura variety $Y_G$ such that there is an embedding $\iota: \GSp_{2g} \hookrightarrow G$ which induces a morphism of Shimura varieties $Y_{\GSp_{2g}} \longrightarrow Y_G$. Let $V_{N}$ be a sufficiently small open compact subgroup of $G(\widehat{\Z})$, whose pull-back to $\GSp_{2g}$ is equal to $U_N$ and such that the map \[ \iota_{N}:Y_{\GSp_{2g}}(U_{N}) \longrightarrow Y_G(V_{N})\] is a closed immersion of codimension $d$, then we have the following.
\begin{cor}\label{rel1g} Let $\pi_\ell'$ denote the natural degeneracy morphism $Y_G(V_{N\ell})\to Y_G(V_{N})$; then, we have \[Tr_{\pi_\ell'}(\iota_{N\ell,*}(\Eis_{m,N\ell}^g)) =\begin{cases} \iota_{N,*}(\Eis_{m,N}^g)  & \text{ if }\ell \mid N; \\ 
\left( id - \iota (\begin{smallmatrix} r I & 0 \\ 0 & r I \end{smallmatrix})^* \right) \iota_{N,*}(\Eis_{m,N}^g) & \text{ if }\ell \nmid N, \end{cases}\]
 where $r$ denotes the inverse of $N$ modulo $\ell$.
\end{cor}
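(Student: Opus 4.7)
The plan is to deduce the corollary from Proposition \ref{tracecompfalt} by a proper base change argument.

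First I would verify that the commutative square
$$\xymatrix{ Y_{\GSp_{2g}}(U_{N\ell}) \ar[r]^-{\iota_{N\ell}} \ar[d]_{\pi_\ell} & Y_G(V_{N\ell}) \ar[d]^{\pi'_\ell} \\ Y_{\GSp_{2g}}(U_N) \ar[r]^-{\iota_N} & Y_G(V_N) }$$
is Cartesian. Via the adelic description of these Shimura varieties, the fibre of $\pi'_\ell$ over a point of $Y_G(V_N)$ is parametrised by $V_N/V_{N\ell}$, and when this point lies in the image of $\iota_N$, the hypothesis that $U_N$ and $U_{N\ell}$ are the pull-backs of $V_N$ and $V_{N\ell}$ to $\GSp_{2g}(\widehat{\Z})$ ensures that this fibre matches $U_N/U_{N\ell}$, which is exactly the fibre of $\pi_\ell$. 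This geometric check is the key input and I expect it to be the main (and only non-routine) step.

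Granting Cartesianness, proper base change for the finite \'etale maps $\pi_\ell, \pi'_\ell$ and the closed immersions $\iota_N, \iota_{N\ell}$, together with adjunction, yields the operator identity $\operatorname{Tr}_{\pi'_\ell}\circ \iota_{N\ell,*} = \iota_{N,*}\circ \operatorname{Tr}_{\pi_\ell}$. Applying this to $\Eis_{m,N\ell}^g$ and invoking Proposition \ref{tracecompfalt} directly yields the case $\ell \mid N$; in the case $\ell \nmid N$, it reduces the claim to the compatibility
$$\iota_{N,*}\bigl((\begin{smallmatrix} rI & 0 \\ 0 & rI \end{smallmatrix})^* \Eis_{m,N}^g\bigr) = \iota(\begin{smallmatrix} rI & 0 \\ 0 & rI \end{smallmatrix})^* \iota_{N,*}(\Eis_{m,N}^g).$$

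Finally, I would check this last identity by observing that $\operatorname{diag}(rI, rI)$ is a scalar in $\GSp_{2g}$, hence central, so it normalises $U_N$, and its image under $\iota$ normalises $V_N$; consequently the right-multiplication actions define automorphisms $\sigma$ of $Y_{\GSp_{2g}}(U_N)$ and $\tau$ of $Y_G(V_N)$ satisfying $\iota_N \circ \sigma = \tau \circ \iota_N$. Since pullback is inverse to pushforward along isomorphisms, $\iota_{N,*}\sigma^* = (\iota_N \sigma^{-1})_* = (\tau^{-1}\iota_N)_* = \tau^* \iota_{N,*}$, which is the required identity. Beyond the Cartesianness of the initial square, everything is formal.
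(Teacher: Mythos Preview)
Your argument has a genuine gap: the square is \emph{not} Cartesian in general. The degree of the finite \'etale cover $\pi'_\ell$ is $[V_N : V_{N\ell}]$, while that of $\pi_\ell$ is $[U_N : U_{N\ell}]$, and the hypotheses $U_N = V_N \cap \GSp_{2g}$ and $U_{N\ell} = V_{N\ell} \cap \GSp_{2g}$ do not force these indices to coincide. In the paper's intended application $\GSp_4 \hookrightarrow \operatorname{GU}(2,2)$ they visibly differ, since the congruence defining $V_1(\ell)$ is taken modulo $\ell\,\O_F$ rather than modulo $\ell$. Concretely, the fibre of $\pi'_\ell$ over a point $\iota_N(x)$ is the $V_N/V_{N\ell}$-orbit of a lift, and most of these translates do not land in the image of $\iota_{N\ell}$; so the sketch ``the fibre matches $U_N/U_{N\ell}$'' fails.

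Fortunately, Cartesianness is unnecessary. The identity
\[
\operatorname{Tr}_{\pi'_\ell}\circ \iota_{N\ell,*} \;=\; \iota_{N,*}\circ \operatorname{Tr}_{\pi_\ell}
\]
already follows from mere \emph{commutativity} of the square: both the Gysin map for a closed immersion and the trace map for a finite \'etale morphism are instances of proper pushforward between smooth varieties, and this is functorial, so $(\pi'_\ell \circ \iota_{N\ell})_* = (\iota_N \circ \pi_\ell)_*$. That is exactly what the paper's one-line proof uses. Your treatment of the scalar-matrix compatibility in the case $\ell \nmid N$ is correct and rests on the same principle. The fix is therefore simply to replace the Cartesianness-plus-base-change step by an appeal to functoriality of pushforward along the commutative square.
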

\begin{proof}
It follows from Proposition \ref{tracecompfalt} and the commutativity of the diagram \[ \xymatrix{ Y_{\GSp_{2g}}(U_{N\ell}) \ar[r]^-{\iota_{N\ell}} \ar[d]_{\pi_\ell} & Y_G(V_{N\ell}) \ar[d]^{\pi_\ell'} \\ 
 Y_{\GSp_{2g}}(U_{N}) \ar[r]^-{\iota_{N}}  & Y_G(V_{N}).}\]  
\end{proof}
In Section 5, we use these trace compatibility results in the case where $\iota:\GSp_4\hookrightarrow \operatorname{GU}(2,2)$, where $\operatorname{GU}(2,2)$ denotes the group scheme over $\Z$ associated to a unitary group of signature $(2,2)$.

\section{Iwasawa theory of Eisenstein classes}\label{iwtheisclass}
In the following we define classes in \'etale cohomology groups with coefficients in certain Iwasawa sheaves very close in spirit to those defined by Kings in \cite{ellipticsoule} and \cite{padicinterpolation} and \cite{KLZ} and compare the two constructions.

In the remaining part of the section, we fix an abelian scheme $A$ of relative dimension $g$ over a base $S$ of finite type over $\Z$, and a prime $p$ ($p\nmid c$). We consider $A_r:=A$ as a cover of $A$ under multiplication for $p^r$ and define the inverse system of \'etale lisse sheaves \[ \mathcal{L} := \left([p^r]_*(\Z/p^r\Z)  \right )_{r\geq 0}, \] with transition map $[p^r]_*(\Z/p^r\Z)\to [p^{r-1}]_*(\Z/p^{r-1}\Z)$ given by composition of trace map and reduction modulo $p^{r-1}$. 
Since $S$ is of finite type over $\Z$, by Remark \ref{nicepropofcohom}, we have \[H^{2g-1}_{\et}(A \smallsetminus A[c],\mathcal{L}(g)) \simeq \varprojlim_r H^{2g-1}_{\et}(A_r \smallsetminus A_r[p^rc],\Z/p^r\Z(g)). \] 
We can construct an element in this inverse limit by using the \'etale classes $_cz_m^{A_r}$. Indeed, consider the image of the element $i_r({}_cz_m^{A_r})$ in $H^{2g-1}_{\et}(A_r \smallsetminus A_r[p^rc],\Z/p^r\Z(g))$ under the natural map  \[ i_r:H^{2g-1}_{\et}(A_r \smallsetminus A_r[c],\Z/p^r\Z(g)) \longrightarrow H^{2g-1}_{\et}(A_r \smallsetminus A_r[p^rc],\Z/p^r\Z(g)). \] Since the elements ${}_cz_m^{A_r}$ are trace compatible, i.e. ${\rm Tr}_p({}_cz_m^{A_r})={}_cz_m^{A_{r-1}}$, we can define \[_c\mathcal{Z}_m :=(i_r({}_cz_m^{A_r}))_r \in  H^{2g-1}_{\et}(A \smallsetminus A[c],\mathcal{L}(g)).\]
\subsection{The integral polylogarithm of Kings}
We now recall the construction by Kings of the integral \'etale polylogarithm class \[ _c\text{pol} \in \varprojlim_r H^{2g-1}_{\et}(A_r \smallsetminus A_r[p^rc],\Z/p^r\Z(g)),\] which is characterised by having residue $D_c$. Consider a torsion section $t:S \to A$ and let $A[p^r]\langle t \rangle$ be the $A[p^r]$-torsor defined by the Cartesian diagram \[ \xymatrix{ A[p^r]\langle t \rangle \ar[r] \ar[d]_{p_{r,t}} & A_r \ar[d]^{[p^r]} \\ S \ar[r]^t & A.} \]
Define the sheaf of Iwasawa modules $\Lambda( \mathscr{H}\langle t \rangle)=\left (\Lambda_r( A[p^r]\langle t \rangle)\right)_r:= \left ( p_{r,t,*}\Z/p^r\Z \right)_r$. When $t$ is the unit section $e:S \to A$, denote $\Lambda( \mathscr{H}\langle e \rangle)$ by $\Lambda( \mathscr{H})$. The stalk at a geometric point $\bar{s}$ of $\Lambda_r( A[p^r]\langle t \rangle)$ is isomorphic to the space of $\Z / p^r\Z$-valued measures on $A[p^r]\langle t \rangle_{\bar{s}}$, hence the stalk at $\bar{s}$ of  $\Lambda( \mathscr{H}\langle t \rangle)_{\bar{s}}$ is the Iwasawa algebra $\varprojlim \Lambda_r( A[p^r]\langle t \rangle_{\bar{s}})$, which motivates the chosen notation. Moreover, $\Lambda( \mathscr{H}\langle t \rangle)$ and $ \mathcal{L}$ are simply related by the following.
\begin{lemma}[\cite{padicinterpolation}, Lemma 6.1.2] There is a canonical isomorphism \[ t^* \mathcal{L} \simeq \Lambda( \mathscr{H}\langle t \rangle)\]
\end{lemma}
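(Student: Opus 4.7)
The plan is to deduce the isomorphism at each finite level from finite (hence proper and flat) base change applied to the given Cartesian diagram, and then check that the resulting levelwise isomorphisms are compatible with the transition maps on both sides.

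For each fixed $r$, the multiplication-by-$p^r$ map $[p^r]\colon A_r \to A$ is finite \'etale, so pushforward along it commutes with arbitrary base change on the \'etale site. Applied to the Cartesian square
\[ \xymatrix{ A[p^r]\langle t \rangle \ar[r] \ar[d]_{p_{r,t}} & A_r \ar[d]^{[p^r]} \\ S \ar[r]^t & A,} \]
this yields a canonical isomorphism
\[ t^*\bigl([p^r]_*(\Z/p^r\Z)\bigr) \;\xrightarrow{\;\sim\;}\; p_{r,t,*}(\Z/p^r\Z), \]
which is precisely the $r$-th component of the isomorphism we want.

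It remains to verify compatibility of these isomorphisms with the transition maps defining the two inverse systems. The transition $[p^r]_*(\Z/p^r\Z)\to [p^{r-1}]_*(\Z/p^{r-1}\Z)$ is, by construction, trace along the finite \'etale isogeny $[p]\colon A_r\to A_{r-1}$ (arising from the factorisation $[p^r]=[p^{r-1}]\circ[p]$) followed by reduction modulo $p^{r-1}$. Pulling back by $t$ and applying the base change isomorphism, this becomes the trace along the induced finite \'etale map $A[p^r]\langle t\rangle\to A[p^{r-1}]\langle t\rangle$ (obtained by base-changing $[p]$ along $t$, which again fits into a Cartesian square over $S$), followed by reduction. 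This is exactly the definition of the transition map on $\Lambda(\mathscr{H}\langle t\rangle)$, so the squares commute.

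The only point requiring care is the functoriality of the base change isomorphism with respect to the trace morphism in a tower of Cartesian squares; this is standard once one writes the combined square $A[p^r]\langle t\rangle \to A[p^{r-1}]\langle t\rangle \to S$ sitting over $A_r\to A_{r-1}\to A$. No genuine obstacle arises: the entire statement is a formal consequence of proper base change for finite morphisms and the standard compatibility of traces with base change.
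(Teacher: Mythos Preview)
Your proof is correct and follows exactly the paper's approach: the paper's own proof simply says it suffices to work at finite level and then invokes the Cartesian diagram to obtain $t^*[p^r]_*\Z/p^r\Z \simeq p_{r,t,*}\Z/p^r\Z$. Your additional verification of compatibility with the transition maps is a welcome elaboration of what the paper leaves implicit.
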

\begin{proof}
It is sufficient to work at finite level. Thus, the result follows from the Cartesian diagram above, since we deduce that \[t^*[p^r]_* \Z/p^r\Z \simeq p_{r,t,*}\Z/p^r\Z. \] \end{proof}
We are now ready to state a result of Kings, which allows to define the integral polylogarithm class in terms of its residue; this is of fundamental importance in the study of integrality of Eisenstein classes and makes explicit the relation between polylogarithm classes and Siegel units in the case of elliptic curves (as in \cite{ellipticsoule}, Theorem 12.4.21).
\begin{proposition}[\cite{padicinterpolation}, Proposition 6.3.1]
We have a short exact sequence \[ \xymatrix{ 0 \ar[r] &  H^{2g-1}_{\et}(A \smallsetminus A[c], \mathcal{L}(g)) \ar[r]^-{res} & H^0_{\et}(S,\pi_{|_{A[c]},*}\pi_{|_{A[c]}}^*\Lambda( \mathscr{H})) \ar[r] & H^0(S,\Z_p) \ar[r] &0} \]
\end{proposition}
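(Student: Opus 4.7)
The plan is to apply the absolute-purity Gysin distinguished triangle to $i:A[c]\hookrightarrow A$ at each finite level of the inverse system defining $\mathcal{L}(g)$. At level $r$, the immersion $A_r[p^rc]\hookrightarrow A_r$ is regular of pure codimension $g$ and $p$ is invertible on $S$, so absolute purity identifies $i^!\Z/p^r\Z(g)=\Z/p^r\Z[-2g]$. Pushing forward via the structure map $\pi_{A_r}:A_r\to S$ produces the distinguished triangle on $S$
\[
\pi_{r,c,*}\Z/p^r\Z[-2g]\to R\pi_{A_r,*}\Z/p^r\Z(g)\to R\pi_{U_{r,c},*}\Z/p^r\Z(g)\to \pi_{r,c,*}\Z/p^r\Z[1-2g],
\]
where $\pi_{r,c}:A_r[p^rc]\to S$ and $U_{r,c}=A_r\setminus A_r[p^rc]$. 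The Cartesian diagram
\[
\xymatrix{A_r[p^rc]\ar[r]\ar[d]_{j_{r,c}} & A_r\ar[d]^{[p^r]}\\ A[c]\ar[r]^-{i} & A}
\]
exhibits $j_{r,c}$ as an $A[p^r]$-torsor, so finite \'etale base change yields $\pi_{r,c,*}\Z/p^r\Z=\pi_{c,*}\pi_c^*\Lambda_r(\mathscr{H})$ (writing $\pi_c:A[c]\to S$). Taking the long exact hypercohomology sequence in degree $2g-1$ and passing to inverse limits -- justified because all groups are finite ($S$ is of finite type over $\Z$), so Mittag-Leffler gives $\varprojlim{}^1=0$ -- one obtains
\[
0\to H^{2g-1}_{\et}(A,\mathcal{L}(g))\to H^{2g-1}_{\et}(U_c,\mathcal{L}(g))\xrightarrow{res} H^0_{\et}(S,\pi_{c,*}\pi_c^*\Lambda(\mathscr{H}))\to H^{2g}_{\et}(A,\mathcal{L}(g)).
\]

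It then suffices to prove $H^{2g-1}_{\et}(A,\mathcal{L}(g))=0$ and $H^{2g}_{\et}(A,\mathcal{L}(g))\cong H^0_{\et}(S,\Z_p)$, both reducing to a pro-system computation of $R\pi_*\mathcal{L}(g)$. The transition $R^q\pi_{A_r,*}\Z/p^r\Z(g)\to R^q\pi_{A_{r-1},*}\Z/p^{r-1}\Z(g)$ is the trace $[p]_*$ followed by reduction mod $p^{r-1}$, and $[p]_*$ acts on degree-$q$ cohomology of the abelian scheme as multiplication by $p^{2g-q}$ (cf.\ the proof of Lemma \ref{operatore}). Iterating $k$ levels down multiplies by $p^{k(2g-q)}$: for $q<2g$ every section is eventually annihilated against the $\Z/p^{r-k}\Z$-torsion constraint, so $\varprojlim_r R^q\pi_{A_r,*}\Z/p^r\Z(g)=0$; for $q=2g$ the action of $[p]_*$ is trivial on the top cohomology $R^{2g}\pi_{A_r,*}\Z/p^r\Z(g)=\Z/p^r\Z$, giving $\Z_p$ in the limit. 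The same $p$-power argument applied to each Leray term $\varprojlim_r H^p(S,R^q\pi_{A_r,*}\Z/p^r\Z(g))$ kills all contributions with $q<2g$, so the (continuous) Leray spectral sequence collapses onto the $(0,2g)$-diagonal and yields $H^n_{\et}(A,\mathcal{L}(g))=H^{n-2g}_{\et}(S,\Z_p)$ for $n\geq 2g$ and $0$ otherwise, as required.

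Finally, surjectivity of the augmentation $H^0_{\et}(S,\pi_{c,*}\pi_c^*\Lambda(\mathscr{H}))\to H^0_{\et}(S,\Z_p)$ is immediate from the unit section $e:S\to A[c]$: it provides a splitting sending $1$ to the delta function at the origin. The main obstacle is the acyclicity established in the second paragraph, which algebraically expresses the triviality of the pro-\'etale universal $p$-adic cover $\varprojlim_r A_r$ of $A$; its careful execution requires handling Mittag-Leffler conditions within the continuous \'etale cohomology framework of Section \ref{continuousetalecohomology}, but the finiteness of every group at each finite level makes all the limits behave well.
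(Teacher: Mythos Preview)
Your proof is correct and essentially reconstructs the argument from the cited reference. Note, however, that the paper itself does not actually prove this proposition: it simply cites \cite[Proposition 6.3.1]{padicinterpolation} and adds a one-line remark explaining that the statement here has been rephrased using the identification
\[
H^{2g-1}_{\et}(A \smallsetminus A[c], \mathcal{L}(g)) \simeq H^{2g-1}_{\et}(S, R\pi_{*}Rj_{c*}j_{c}^{*}\mathcal{L}(g)),
\]
as in \cite[Section 6.5]{padicinterpolation}. So there is no ``paper's own proof'' to compare against beyond that reference.

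That said, your argument is the right one and matches what underlies Kings' proof. The key input is the vanishing of $R^q\pi_*\mathcal{L}$ for $q<2g$ (this is \cite[Theorem 6.2.3]{padicinterpolation}, also invoked in the Remark following the definition of $\operatorname{_cpol}$ in the paper), and you correctly derive it from the fact that the transition maps involve $[p]_*$, which acts as $p^{2g-q}$ in degree $q$. One small point worth making explicit: in identifying $\pi_{r,c,*}\Z/p^r\Z$ with $\pi_{c,*}\pi_c^*\Lambda_r(\mathscr{H})$, the map $j_{r,c}:A_r[p^rc]\to A[c]$ given by $[p^r]$ differs from the CRT projection $A[c]\times_S A[p^r]\to A[c]$ by the automorphism $[p^r]$ of $A[c]$; since you then push forward along $\pi_c$ to $S$, this automorphism is harmless, but it is good to note why.
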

\begin{proof}
The statement differs from the one of \cite[Proposition 6.3.1]{padicinterpolation}, since we have used that \[ H^{2g-1}_{\et}(A \smallsetminus A[c], \mathcal{L}(g)) \simeq H^{2g-1}_{\et}(S, R\pi_{*}Rj_{c*}j_{c}^{*}\mathcal{L}(g)),\] as explained in \cite[Section 6.5]{padicinterpolation}.\end{proof}
Let $D_c$ be the \'etale characteristic class of $c^{2g}(0)-A[c]$, as defined in Definition \ref{dc}. Since \[D_c \in H^0_{\et}(S,\pi_{|_{A[c]},*}\pi_{|_{A[c]}}^*\Lambda( \mathscr{H})),\] we can define the integral \'etale polylogarithm class as follows.
\begin{definition} The integral \'etale polylogarithm class associated to $D_c$ is \[ \operatorname{_{c}pol}\in H^{2g-1}_{\et}(A \smallsetminus A[c], \mathcal{L}(g)) \] such that $res( \operatorname{_{c}pol})=D_c$. \end{definition}
\begin{remark}
There is a clear discrepancy between the construction of Kings and the one of Faltings. The \'etale polylogarithm class does not depend on the choice of an auxiliary integer $m$, because of the vanishing of the direct image sheaves of $\mathcal{L}$ in degree less than $2g$ (\cite[Theorem 6.2.3]{padicinterpolation}). 
\end{remark}

\subsection{The comparison of the two constructions}

The comparison between the integral polylogarithm and the class of Theorem \ref{thetheorem} relies on the very characterisation of the two elements, as the next proposition shows.  
\begin{proposition}\label{comparisonfk}
Let $c,m$ be as before. \[N_m^2(\operatorname{_{c}pol})={}_c\mathcal{Z}_m.\]
\end{proposition}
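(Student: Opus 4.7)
The plan is to exploit the injectivity of the residue map provided by Kings' proposition. Indeed, the short exact sequence in that proposition gives an injection
\[ \operatorname{res}: H^{2g-1}_{\et}(A \smallsetminus A[c], \mathcal{L}(g)) \hookrightarrow H^0_{\et}(S,\pi_{|_{A[c]},*}\pi_{|_{A[c]}}^*\Lambda(\mathscr{H})), \]
so it is enough to show that both $N_m^2(\operatorname{_{c}pol})$ and ${}_c\mathcal{Z}_m$ have residue equal to $N_m^2 D_c$. For $N_m^2(\operatorname{_{c}pol})$ this is immediate from the defining property $\operatorname{res}(\operatorname{_{c}pol}) = D_c$ of the integral polylogarithm, together with the $\Z_p$-linearity of $\operatorname{res}$.

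The substantive step is therefore to compute $\operatorname{res}({}_c\mathcal{Z}_m)$, and I would do this level by level. At level $r$, Theorem \ref{thetheorem} provides the Faltings class ${}_cz_m^{A_r} \in H^{2g-1}_{\et}(A_r \smallsetminus A_r[c], \Z/p^r\Z(g))$ with \'etale residue $N_m^2 D_c^{A_r}$ in $H^0_{\et}(A_r[c], \Z/p^r\Z)$. Restricting through $i_r$ and using the compatibility of the \'etale Gysin sequences for the nested open immersions $A_r \smallsetminus A_r[p^rc] \subset A_r \smallsetminus A_r[c] \subset A_r$, the residue of $i_r({}_cz_m^{A_r})$ in $H^0_{\et}(A_r[p^rc], \Z/p^r\Z)$ is the pushforward of $N_m^2 D_c^{A_r}$ along the closed immersion $A_r[c] \hookrightarrow A_r[p^rc]$ (i.e.\ extension by zero on the extra components). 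Via the base change identity $A[c] \times_A A_r \simeq A_r[p^rc]$ for the finite \'etale map $[p^r]$, this group is precisely the level-$r$ component of $H^0_{\et}(S,\pi_{|_{A[c]},*}\pi_{|_{A[c]}}^*\Lambda(\mathscr{H}))$. Since the $D_c^{A_r}$ are trace-compatible in $r$ -- which is exactly the property that assembles them into the section $D_c$ of the Iwasawa sheaf $\Lambda(\mathscr{H})$ -- so are their $N_m^2$-multiples, and the inverse limit yields $N_m^2 D_c$, as desired.

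Once this is established, the injectivity of $\operatorname{res}$ immediately gives $N_m^2(\operatorname{_{c}pol}) = {}_c\mathcal{Z}_m$. The main technical point is the middle paragraph: one must be careful that the restriction $i_r$ interacts with the Gysin sequence in the expected way, so that the level-$r$ residue is indeed the extension by zero of $N_m^2 D_c^{A_r}$ along $A_r[c] \hookrightarrow A_r[p^rc]$, and that, after the base change identification, the transition maps of the Iwasawa sheaf $\Lambda(\mathscr{H})$ match the trace maps on the $A_r[p^rc]$ used in the construction of ${}_c\mathcal{Z}_m$. Both are formal, but it is here that the whole comparison between the Faltings picture (characterised by the auxiliary $m$) and the Kings picture (characterised by its residue, needing no such auxiliary choice) is reconciled.
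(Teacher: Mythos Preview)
Your proposal is correct and follows exactly the same route as the paper: the paper's proof is a two-line remark that both classes are inverse systems with residue $N_m^2 D_c$ in $H^0(A[cp^r],\Z/p^r\Z)$, and then invokes the injectivity of the residue map from Kings' short exact sequence. You have simply unpacked the finite-level computation that the paper leaves implicit --- in particular the extension-by-zero step along $A_r[c]\hookrightarrow A_r[p^rc]$ and the matching of transition maps --- but the strategy and the key input are identical.
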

\begin{proof}
This is a straight-forward consequence of the comparison of the residues at finite levels. Indeed, both $N_m^2(\operatorname{_{c}pol})$ and ${}_c\mathcal{Z}_m$ are inverse system of classes having residues $N_m^2D_c$ in $H^0(A[cp^r],\Z/p^r\Z)$.
\end{proof}
\begin{remark}  
This is the \emph{integral counterpart} of a comparison between the underlying rational motivic classes of Faltings and Kings, which follows from the description of the degree 0 part of the polylogarithm on abelian schemes of \cite{kingsrossler} (e.g. \cite[\S 3.2.4]{phdthesis}). 
\end{remark}

\begin{definition}\label{definitionlambdaadicclass}
 Let $t:S \to A$ be a $q$-torsion section, for $q$ coprime to $c$; define the $\Lambda$-adic Eisenstein class $_c\mathcal{EI}_m(t)$ to be  \[ t^*({_c\mathcal{Z}_m}) \in H^{2g-1}_{\et}(S, t^*\mathcal{L}(g)) \simeq \varprojlim_r H^{2g-1}_{\et}(A[p^r]\langle t \rangle,\Z/p^r\Z(g)).  \] 
\end{definition} 
 \begin{remark} By Proposition \ref{comparisonfk}, $_c\mathcal{EI}_m(t)$ is just a multiple of Kings' Eisenstein-Iwasawa class \[_c\mathcal{EI}(t):=t^*(\operatorname{_{c}pol}).\]
 \end{remark}
 
  \begin{remark}
Kings defines moment maps (e.g. \cite[Definition 5.5.2]{padicinterpolation}) \[mom_k:H^{2g-1}_{\et}(S,\Lambda( \mathscr{H})(g)) \longrightarrow H^{2g-1}_{\et}(S, \Gamma_k( \mathscr{H})(g)),\] for any integer $k \geq 0$, where $\Gamma_k( \mathscr{H})$ is the sheaf of $k^{th}$ divided powers of the $p$-adic Tate module of $A$. This allows us to define classes \[ _cz_{m,q}^k:=mom_k(\operatorname{Tr}_q({}_c\mathcal{EI}_m(t))),\] which, after tensoring with $\Q_p$, are in $H^{2g-1}_{\et}(S, Sym^k( \mathscr{H}_{\Q_p})(g)).$ However, this element is just a multiple of the Eisenstein class constructed in \cite{padicinterpolation}.
 \end{remark}
\subsection{$\Lambda$-adic Eisenstein classes for $\GSp_{2g}$}

In the following, we restrict our attention to the case where $S$ is an integral model of the Shimura variety for $\GSp_{2g}$.

 Let  $U_{p^r}$ be the sufficiently small open compact subgroup of  $\GSp_{2g}(\widehat{\Z})$ introduced in  Definition \ref{newdeflev}, and let $\mathcal{Y}_{\GSp_{2g}}(U_{p^r})$ be the model of ${Y}_{\GSp_{2g}}(U_{p^r})$ over $\Z[\frac{1}{Dp}]$. Denote by $\mathcal{A}(U_{p^r})/\mathcal{Y}_{\GSp_{2g}}(U_{p^r})$ its universal abelian scheme, which is equipped with the universal $p^r$-torsion section $e_{p^r}$. Associated to $\mathcal{A}(U_{p^r})$ and $e_{p^r}$, we have the \'etale Eisenstein class  \[ \Eis_{m,p^r}^g:= e_{p^r}^*\left({_cz_{m,e_{p^r}}^{\mathcal{A}(U_{p^r})}}\right)\in  H_{\et}^{2g-1}(\mathcal{Y}_{\GSp_{2g}}(U_{p^r}) ,\Z_{p}(g)).\]  

\begin{lemma}\label{integralversionofA} Let $\operatorname{Tr}_{\pi_p}$ be the trace map of the natural degeneracy map $\pi_p:\mathcal{Y}_{\GSp_{2g}}(U_{p^{r+1}}) \longrightarrow \mathcal{Y}_{\GSp_{2g}}(U_{p^{r}}). $ If $r \geq 1$, then \[ \operatorname{Tr}_{\pi_p}(\Eis_{m,p^{r+1}}^g)=\Eis_{m,p^r}^g.\]  
\end{lemma}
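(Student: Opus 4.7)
The plan is to mimic exactly the proof of Proposition \ref{tracecompfalt} in the case $\ell \mid N$, now over the integral base $\Z[\frac{1}{dp}]$. The hypothesis $r \geq 1$ ensures that $p \mid p^r$, placing us in the first case of that proposition.

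First I would establish the integral analogue of the Cartesian diagram from Lemma \ref{caseatp}(2), namely
\[
\xymatrix{
\mathcal{Y}_{\GSp_{2g}}(U_{p^{r+1}}) \ar[rr]^-{(id\times \pi_p) \circ e_{p^{r+1}}} \ar[d]_{\pi_p} & & \mathcal{A}(U_{p^r}) \ar[d]^{[p]} \\
\mathcal{Y}_{\GSp_{2g}}(U_{p^r}) \ar[rr]^-{e_{p^r}} & & \mathcal{A}(U_{p^r}).
}
\]
The moduli-theoretic argument in Lemma \ref{caseatp}(2) carries over verbatim to the integral setting, thanks to Lemma \ref{integralmodelversion}: over $\Z[\frac{1}{dp}]$, the scheme $\mathcal{Y}_{\GSp_{2g}}(U^{(p)}U_1(p^r))$ represents the functor of points of exact order $p^r$ on the universal abelian scheme of $\mathcal{Y}_{\GSp_{2g}}(U^{(p)}\GSp_{2g}(\Z_p))$. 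The assumption $r\geq 1$ is precisely what guarantees that any preimage under $[p]$ of a section of exact order $p^r$ has exact order $p^{r+1}$, so that the fibre-product identification goes through.

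Then I would run the same chain of equalities as in the proof of Proposition \ref{tracecompfalt}, using base-change invariance (Proposition \ref{basechrel}) and trace invariance under multiplication-by-$p$ (Proposition \ref{tracerel}) of Faltings' classes:
\begin{align*}
\operatorname{Tr}_{\pi_p}(\Eis_{m,p^{r+1}}^g)
&= \operatorname{Tr}_{\pi_p}\bigl(e_{p^{r+1}}^*({_cz_m^{\mathcal{A}(U_{p^{r+1}})}})\bigr) \\
&= \operatorname{Tr}_{\pi_p}\bigl(e_{p^{r+1}}^*(id\times\pi_p)^*({_cz_m^{\mathcal{A}(U_{p^r})}})\bigr)
 & \text{(Prop. \ref{basechrel})} \\
&= e_{p^r}^*\bigl(\operatorname{Tr}_p({_cz_m^{\mathcal{A}(U_{p^r})}})\bigr)
 & \text{(Cartesian square)} \\
&= e_{p^r}^*({_cz_m^{\mathcal{A}(U_{p^r})}})
 & \text{(Prop. \ref{tracerel})} \\
&= \Eis_{m,p^r}^g.
\end{align*}

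The main obstacle is bookkeeping rather than conceptual: one must check that the constructions of Section \ref{eisclassabschm} (Faltings' class, the étale Gysin sequence, base-change and trace compatibility for isogenies of degree prime to $c$) work for abelian schemes over the integral base $\Z[\frac{1}{dp}]$, not only over a $\Q$-scheme. This is automatic because $p$ is invertible on $\mathcal{Y}_{\GSp_{2g}}(U_{p^r})_{/\Z[1/dp]}$, so the $p$-adic étale cohomological inputs underlying Propositions \ref{tracerel} and \ref{basechrel} apply without modification.
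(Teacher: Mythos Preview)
Your proposal is correct and follows exactly the paper's approach: the paper's proof simply states that the argument is identical to that of Proposition \ref{tracecompfalt} for $\ell = p$, invoking Lemma \ref{integralmodelversion} and the Cartesianness of precisely the diagram you wrote down. You have merely spelled out the chain of equalities and the integral bookkeeping that the paper leaves implicit.
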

\begin{proof}
The proof is identical to the one of Proposition \ref{tracecompfalt} for $\ell=p$. Indeed, it follows from Lemma \ref{integralmodelversion} and the Cartesianness of 
\[ \xymatrix{\mathcal{Y}_{\GSp_{2g}}(U_{p^{r+1}})  \ar[rrr]^-{(id \times \pi_p)\circ e_{p^{r+1}}}\ar[d]_{\pi_p}& & & \mathcal{A}(U_{p^r}) \ar[d]^{[p]} \\ 
\mathcal{Y}_{\GSp_{2g}}(U_{p^{r}})  \ar[rrr]^-{e_{p^r}} & & & \mathcal{A}(U_{p^r}). } \] 
\end{proof}
\begin{remark} By the previous lemma, we have \[ \Eis_{m,p^{\infty}}^g:=  (\Eis_{m,p^r}^g )_{r\geq 1} \in \varprojlim_{r\geq 1} H^{2g-1}_{\et}(\mathcal{Y}_{\GSp_{2g}}(U_{p^{r}}),\Z_p(g)),\] where the inverse limit is taken with respect to the trace maps $\operatorname{Tr}_{\pi_p}$.
\end{remark}
The class $\Eis_{m,p^{\infty}}^g$ can be directly 
related to Kings' Eisenstein-Iwasawa class $_c\mathcal{EI}(e_p):=e_p^*(\operatorname{_{c}pol})$ associated to $\mathcal{A}(U_{p})$. Indeed, we get the following generalisation of Theorem 4.5.1(1) of \cite{KLZ}.
\begin{theorem}\label{secondcomparisonfk}
There is an isomorphism \[H^{2g-1}_{\et}(\mathcal{Y}_{\GSp_{2g}}(U_p),\Lambda( \mathscr{H}\langle e_p \rangle)(g)) \simeq \varprojlim_{r\geq 1} H^{2g-1}_{\et}(\mathcal{Y}_{\GSp_{2g}}(U_{p^{r}}),\Z_p(g)), \] where the inverse limit is with respect to the trace map of the natural degeneracy map \[ \pi_p:\mathcal{Y}_{\GSp_{2g}}(U_{p^{r+1}}) \longrightarrow \mathcal{Y}_{\GSp_{2g}}(U_{p^{r}}). \] Moreover, under this isomorphism the $\Lambda$-adic class $_c\mathcal{EI}_m(e_p)$ of Definition \ref{definitionlambdaadicclass} is mapped to $\Eis_{m,p^{\infty}}^g$. In particular, we have \[ \Eis_{m,p^{\infty}}^g=N_m^2({_c\mathcal{EI}(e_p)}).\]
\end{theorem}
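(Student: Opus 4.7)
The strategy is to prove the comparison isomorphism by identifying the two inverse systems term-by-term, then identify the $\Lambda$-adic class on the nose, and finally deduce the formula from Proposition \ref{comparisonfk}.

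\emph{Step 1: level-wise identification.} For each $r\geq 1$, I would compute the $r$-th term of the inverse system defining the left-hand side. By the Cartesian square defining $\mathcal{A}(U_p)[p^r]\langle e_p\rangle$ together with proper base change, one has a canonical isomorphism $e_p^*[p^r]_*\Z/p^r\Z \simeq p_{r,e_p,*}\Z/p^r\Z$. The key moduli-theoretic input is Lemma \ref{integralmodelversion}, which identifies $\mathcal{A}(U_p)[p^r]\langle e_p\rangle$ with $\mathcal{Y}_{\GSp_{2g}}(U_{p^{r+1}})$ as a finite \'etale cover of $\mathcal{Y}_{\GSp_{2g}}(U_p)$ (a point of order $p^{r+1}$ projects to its unique $p^r$-multiple, which is a point of order $p$). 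Finite \'etale pushforward then gives
\[
H^{2g-1}_{\et}\bigl(\mathcal{Y}_{\GSp_{2g}}(U_p),\, e_p^*[p^r]_*\Z/p^r\Z(g)\bigr) \;\simeq\; H^{2g-1}_{\et}\bigl(\mathcal{Y}_{\GSp_{2g}}(U_{p^{r+1}}),\, \Z/p^r\Z(g)\bigr).
\]

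\emph{Step 2: passage to the inverse limit.} On the left, the limit is the continuous \'etale cohomology by Definition \ref{defetcontcohom}. On the right, since the $\mathcal{Y}_{\GSp_{2g}}(U_{p^r})$ are of finite type over $\Z[\frac{1}{Dp}]$, Remark \ref{nicepropofcohom} gives $H^{2g-1}_{\et}(\mathcal{Y}_{\GSp_{2g}}(U_{p^r}),\Z_p(g)) = \varprojlim_n H^{2g-1}_{\et}(\mathcal{Y}_{\GSp_{2g}}(U_{p^r}),\Z/p^n\Z(g))$. The two-variable inverse limit (in $r$ via $\operatorname{Tr}_{\pi_p}$, in $n$ via reduction) is computed via the cofinal diagonal subsystem, which matches the single index governing the $\Lambda$-adic system of Step 1 (up to a harmless shift $r \mapsto r+1$). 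A standard Mittag-Leffler / cofinality argument then produces the desired isomorphism of inverse limits.

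\emph{Step 3: matching the class.} I would trace $(_c\mathcal{EI}_m(e_p))_r = e_p^* i_r({}_cz_m^{A_r})$ through the identification of Step 1. The upper horizontal map $y\colon\mathcal{A}(U_p)[p^r]\langle e_p\rangle \to \mathcal{A}(U_p)$ of the Cartesian square factors, under the identification $\mathcal{A}(U_p)[p^r]\langle e_p\rangle \simeq \mathcal{Y}_{\GSp_{2g}}(U_{p^{r+1}})$, as $y = F \circ e_{p^{r+1}}$, where $F\colon \mathcal{A}(U_{p^{r+1}}) \to \mathcal{A}(U_p)$ covers the degeneracy $\pi_{p^r}\colon \mathcal{Y}_{\GSp_{2g}}(U_{p^{r+1}}) \to \mathcal{Y}_{\GSp_{2g}}(U_p)$. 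Since $F$ is the base-change morphism of the universal abelian scheme along $\pi_{p^r}$, Proposition \ref{basechrel} gives $F^*({}_cz_m^{\mathcal{A}(U_p)}) = {}_cz_m^{\mathcal{A}(U_{p^{r+1}})}$, so that $y^*({}_cz_m^{\mathcal{A}(U_p)}) = \Eis_{m,p^{r+1}}^g$. Consequently, the $r$-th component of $_c\mathcal{EI}_m(e_p)$ corresponds to $\Eis_{m,p^{r+1}}^g \bmod p^r$. Lemma \ref{integralversionofA} ensures these are compatible under the transition maps: the trace-plus-reduction transition on the left matches $\operatorname{Tr}_{\pi_p}$ on the Eisenstein tower. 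Under the reindexing of Step 2, the resulting system is $\Eis_{m,p^\infty}^g$.

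\emph{Step 4: the final equation.} Applying Proposition \ref{comparisonfk} and pulling back by $e_p$ yields $N_m^2({}_c\mathcal{EI}(e_p)) = {}_c\mathcal{EI}_m(e_p)$, which under the established isomorphism maps to $\Eis_{m,p^\infty}^g$.

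\emph{Main obstacle.} The delicate point is Step 2: one must carefully check that the natural transition maps (trace plus reduction) on the $\Lambda$-adic side translate, after reindexing, into the trace maps $\operatorname{Tr}_{\pi_p}$ governing the Eisenstein tower, and that a Mittag-Leffler / cofinality argument legitimately collapses the bi-indexed limit. A secondary technicality is the precise moduli-theoretic identification $y = F\circ e_{p^{r+1}}$ in Step 3, which requires keeping track of how the universal point of order $p^{r+1}$ recovers both the level-$U_p$-structure via $[p^r]$ and the torsor-point structure.
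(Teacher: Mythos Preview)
Your proposal is correct and follows essentially the same route as the paper: both arguments hinge on the moduli-theoretic identification $\mathcal{A}(U_p)[p^r]\langle e_p\rangle \simeq \mathcal{Y}_{\GSp_{2g}}(U_{p^{r+1}})$ over $\mathcal{Y}_{\GSp_{2g}}(U_p)$, then check that under this identification the map to $\mathcal{A}(U_p)$ becomes the universal $p^{r+1}$-torsion section (your factorisation $y = F\circ e_{p^{r+1}}$), and finally invoke Proposition~\ref{comparisonfk}. Your Step~2 treatment of the double inverse limit (in the level index and in the coefficient index) via cofinality is more explicit than the paper's, which passes directly to $\Z_p$-coefficients at each level and takes a single limit without comment; both are valid since the schemes are of finite type over $\Z[\tfrac{1}{Dp}]$.
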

\begin{proof}
The proof is very similar to the one of Theorem 4.5.1(1) of \cite{KLZ}.
Let $\mathcal{A}:=\mathcal{A}(U_{p})$. Note that there is an isomorphism of schemes $f_r:\mathcal{Y}_{\GSp_{2g}}(U_{p^{r+1}}) \simeq \mathcal{A}[p^r]\langle e_p \rangle$ such that 
\[ \xymatrix{  \mathcal{Y}_{\GSp_{2g}}(U_{p^{r+1}}) \ar[r]^-{f_m} \ar[d]^{\pi_p^r} & \mathcal{A}[p^r]\langle e_p \rangle \ar[d]_{pr_{r,e_p}} \\ \mathcal{Y}_{\GSp_{2g}}(U_p) \ar@{=}[r] & \mathcal{Y}_{\GSp_{2g}}(U_p). } \]
This follows from the very definition of $\mathcal{A}[p^r]\langle e_p \rangle$. Indeed, a point of $\mathcal{A}[p^r]\langle e_p \rangle$ over $(A,\lambda,\alpha_D,P)\in \mathcal{Y}_{\GSp_{2g}}(U_p)$ is given by a point $Q$ of order $p^{r+1}$ of $A$ such that $[p^r]Q=P$. Hence, $f_r$ is the isomorphism defined by sending \[ (A,\lambda,\alpha_D,Q)\mapsto \left( (A,\lambda,\alpha_D,[p^r]Q),Q\right). \]  The isomorphism $f_r$ induces \[ H^{2g-1}_{\et}(\mathcal{A}[p^r]\langle e_p \rangle,\Z_p(g))\simeq H^{2g-1}_{\et}(\mathcal{Y}_{\GSp_{2g}}(U_{p^{r+1}}),\Z_p(g)), \] for all $r \geq 0$, and, passing to the limit, we get the desired isomorphism. \newline Moreover, under $f_r$, the morphism $\mathcal{A}[p^r]\langle e_p \rangle \longrightarrow \mathcal{A}$ corresponds to the universal $p^{r+1}$ torsion section $e_{p^{r+1}}$ of $\mathcal{Y}_{\GSp_{2g}}(U_{p^{r+1}})$. This shows that $\Eis_{m,p^{r+1}}^g$ corresponds to the restriction of ${_cz_m^{\mathcal{A}_r}}$ to $\mathcal{A}[p^r]\langle e_p \rangle$. Thus, \[{}_c\mathcal{EI}_m(e_p)= \Eis_{m,p^{\infty}}^g.\]
Finally, the last equality follows from Proposition \ref{comparisonfk}.
\end{proof}

\section{The $\operatorname{GU}(2,2)$-Shimura variety}\label{genongu22shimvar}

\subsection{The groups}
Let $H:=\GSp_4$ be the group scheme over $\Z$, which was previously defined as
\[ H(R):=\GSp_4(R) = \{ (g,m_g) \in (\GL_4 \times \GL_1)(R) : g^t J g=m_g J \}. \]  After an auxiliary choice of imaginary quadratic field $F$ with ring of integers $\O_F$, define $G := \operatorname{GU}(2,2)$ the group scheme over $\Z$ given by  
\[ G(R) := \operatorname{GU}(2,2)(R) = \{ (g,m'_g) \in \GL_4(R \otimes_{\Z} \O_F ) \times \GL_1(R) :\bar{g}^t J g = m'_g J\}, \]
where $\bar{\bullet}$ denotes the non-trivial automorphism of order 2 of $F/\Q$.
We denote by $\nu: G \longrightarrow \GL_1$, the unitary multiplier map given by $g \mapsto m'_g$. 
Of fundamental importance for these notes will be the embedding $\varphi:H \hookrightarrow G$, given by the natural embedding of $\GL_4$ inside $Res_{\O_F/\Z}\GL_4$.

\subsection{The Shimura variety}
We recall the definition of the Shimura variety attached to $G$ and its moduli interpretation. Let $F$ be the imaginary quadratic field used to define $G$ and fix an integral basis $\{ 1, y_f \}$ of $F$. The Shimura datum for $H$ defines a Shimura datum $(G,X_G)$ via the embedding $H \hookrightarrow G$. Thus, define the following. 
\begin{definition}  For an open compact subgroup  $U$ of $G(\Af)$, consider the double quotient space
\[Y_G(U)(\C) =G(\Q) \backslash X_G \times G(\Af)/ U.\] 
\end{definition}
Multiplication by $g \in G(\Af)$ gives a morphism \[g:Y_G(U)(\C) \longrightarrow Y_G(g^{-1}Ug)(\C), \quad [\tau,h]\mapsto [\tau,hg].\] This defines a right action of $G(\Af)$ on $Y_G(U)(\C)$. We note that if $U$ is neat (in the sense of \cite{pink}, p. 13), the quotient $Y_G(U)(\C)$ is the set of complex points of a smooth (quasi-projective) variety $Y_G(U)$ over $\C$. Since the reflex field of $(G,X_G)$  is $\Q$, $Y_G(U)$ admits a canonical model (smooth, of finite type) over $\Q$ (for instance, see \cite{moonen} Theorem 2.18). More precisely, one shows that the variety is canonically defined over $\bar{\Q}$ and then by using the (continuous) action of $Gal(\bar{\Q}/\Q)$ on this model over $\bar{\Q}$, it is possible to descend to $\Q$ (see \cite{moonen}, chapter 2). Moreover, we obtain $\Q$-morphisms $g:Y_G(U)\longrightarrow Y_G(g^{-1}Ug)$, for any $g \in G(\Af)$. We have a description of $Y_G(U)$ as a moduli space of abelian schemes of relative dimension 4 with principal polarisation, level $U$ structure with compatible action of $\O_F$. More precisely, consider the functor $\mathcal{L}_{U}$ from the category of locally Noetherian $F$-schemes to $Sets$, which to $S$ assigns isomorphism classes of $(A,\lambda, \iota , \alpha_{U})$, where \begin{itemize}
\item $A/S$ is an abelian scheme of relative dimension 4;
\item $\lambda$ is a principal polarisation of $A$;
\item $\iota: \O_F \to End_S(A)$ is a homomorphism which is compatible with $\lambda$, i.e. $$\iota(a)^{\vee} \circ \lambda = \lambda \circ \iota( \bar{a}),$$ for all  $a \in \O_F$, such that there is a splitting $$Lie_S(A)=Lie_S(A)^+\oplus Lie_S(A)^-,$$
where the direct summands are locally free $\O_S$-sheaves of ranks 2 and $z \in \O_F$ acts on $Lie_S(A)^+$ by $f(z)$ and on $Lie_S(A)^-$ by $f(\bar{z})$, where $f: F \to \O_S$ is the structure homomorphism;
\item $\alpha_U$ is a unitary $U$-level structure. 
\end{itemize}
\begin{remark} \leavevmode \begin{enumerate}
\item We call the level structure unitary to emphasise the difference with the $\GSp_{2g}$-case and we recall its definition below;
\item The splitting condition on $Lie_S(A)$ is equivalent to requiring that  \[det(T-\iota(z)|Lie_S(A))=(T-f(z))^2(T-f(\bar{z}))^2\in \O_S[T],\] where $f: F \to \O_S$ is the structure homomorphism coming from the $F$-structure of $S$;
\item In the case of neat open compact subgroup $U$ of $G(\widehat{\Z})$, the functor $\mathcal{L}_{U}$ is representable by a quasi-projective $F$-scheme (\cite{cptpel} Theorem 1.4.1.11) which is identified with ${Y_G(U)}_{/F}$, which is the base-change over $F$ of $Y_G(U)/\Q$.
\item Following $(3)$, the reader might wonder why we do not define an analogous functor, say $\mathcal{L}^{\Q}_{U}$, over the category of $\Q$-schemes. Note that the condition on $Lie_S(A)$ does not make sense anymore. Namely, if we have a $\Q$-vector space $V$ which does not have an $F$-structure but an action of $\O_F$ by $\Q$-linear maps, then the endomorphism corresponding to $y_F$ (where $\{1,y_F\}$ is an integral basis of $F$) is only diagonalisable after tensoring by $\O_F$ and the required decomposition would work for the $F$-vector space $V \otimes_{\Z} \O_F$. Hence the $\Q$-points of $\mathcal{L}^{\Q}_{U}$  are the empty set. Note that the same idea applies to any $\Q$-scheme $S$ and locally free $\O_S$-sheaf without $F$-structure. Since $\mathcal{L}_{U}$ is representable by ${Y_G(U)}_{/F}$, then $\mathcal{L}^{\Q}_U$ is representable by the scheme ${Y_G(U)}_{/F}\to Spec(F) \to Spec(\Q)$, obtained by composing with the inclusion $\Q \hookrightarrow F$, which is clearly not isomorphic to the canonical model $Y_G(U)/\Q$.
\end{enumerate}
\end{remark}

We now describe what a unitary level structure for an abelian scheme $A/S$ with $\O_F$-action is. Denote by $K(p)$ the kernel of the reduction modulo $p$ \[G(\widehat{\Z}) \longrightarrow G(\Z/p\Z).\] Consider the sesquilinear pairing $H$ on $\O_F^4$ given by $y_F\cdot J$ and let  $\langle \bullet, \bullet \rangle:\O_F^4 \times \O_F^4 \to \Z$ be the skew-symmetric pairing defined by \[ H(x_1,x_2)=\langle x_1, y_F\cdot  x_2 \rangle+y_F \langle x_1, x_2 \rangle .\]
Note that  $\langle \bullet , \bullet \rangle$ restricted to $\Z^4$ is the pairing defined by the matrix $J$. \newline 
As we previously did for symplectic structures, we can define level $U$-structures for general open compact subgroups of $U \subset G(\widehat{\Z})$, as follows.

\begin{definition}\label{unitarylevel}
Let $U$ be an open compact subgroup of $G(\widehat{\Z})$ and for any integer $M$ such that $K(M) \subset U$ denote by $U_M$ the quotient $U/K(M)$. Then, a unitary level $U$ structure of $(A,\lambda, \iota)_{/S}$ is a collection $\{ \alpha_{U_M} \}_{M}$, where $M$ varies among the integers such that $K(M) \subset U$, of elements $\alpha_{U_M}$ such that \begin{enumerate}
\item $\alpha_{U_M}$ is a locally \'etale defined $U_M$-orbit of an $\O_F$-equivariant isomorphism \[ \alpha_M: (\O_F/M\O_F)^4\longrightarrow A[M],\] with the property that there is an isomorphism $\beta_M:((\Z/M\Z)(1))_{/_S} \longrightarrow \mu_{M/_S}$ which makes the diagram 
  \[ \xymatrix{(\O_F/ M \O_F)^4_{/_S}{\times}_S (\O_F/M\O_F)^4_{/_S}\ar[rr]^-{\langle \bullet , \bullet \rangle} \ar[d]_{\alpha_M \times \alpha_M} & & ((\Z/M\Z)(1))_{/_S} \ar[d]^{\beta_M} \\ 
  A[M] {\times}_S A[M] \ar[rr]^{e_{\lambda}} & & \mu_{M/_S}
    }\]
    commutative.
\item If $L|M$, $\alpha_{U_L}$ correspond to the reduction modulo $L$ of $\alpha_{U_M}$.
\end{enumerate} 
\end{definition}

\begin{remark} One could define unitary level structure on the Tate module of the abelian scheme. For instance, a unitary full level $p$ structure on  $(A,\lambda,\iota)_{/_S}$ corresponds to a collection $\{\alpha_{\bar{s}} \}_{\bar{s}}$ of $\pi_1(S,\bar{s})$-invariant $K(p)$-orbit of an $\O_F$-equivariant isomorphism $$\alpha_{\bar{s}}: \hat{\O}_F^4 \longrightarrow T_{\bar{s}}(A)$$ which respects the two forms $e_{\lambda}$ and $\langle \bullet , \bullet \rangle$  as in Definition \ref{unitarylevel}, and such that $\alpha_{\bar{s}}$ and  $\alpha_{\bar{s}'}$ are canonically identified for any two geometric points $\bar{s} ,\bar{s}'$ in the same connected component.
As in the symplectic case, $g \in G(\widehat{\Z})$ acts on the isomorphism $\alpha_{\bar{s}}$ by $\alpha_{\bar{s}} \circ g$.
\end{remark}
 
\subsection{A remark on the embedding at the level of moduli}

We note that $\varphi: H \hookrightarrow G$ induces closed immersions $\phi_{U}: Y_H(U \cap H) \longrightarrow Y_G(U),$ 
for certain open compact subgroups $U$ of $G(\Af)$.
 Due to the moduli space description of these spaces, it is reasonable to ask whether we have a nice explicit description of the pull-back of the universal element of ${Y_G(U)}_{/F}$ in terms of the (base-change to $F$) of the universal element of ${Y_H(U \cap H)}$.

First, recall the main properties of Serre's tensor construction for abelian schemes. 

\begin{lemma}[Serre's Tensor Construction] Let $R$ be a ring and $M$ be a finite projective $R$-module; for any group scheme $A$ with $R$-module structure over $S$, the functor from $S$-schemes to Sets $$\mathcal{F}:M \to A(T) \otimes_R M$$ is representable by a group scheme, which is denoted by $$A\otimes_R M.$$
Moreover, in the case where $A/S$ is an abelian scheme, then we have: \begin{enumerate}
\item $A\otimes_R M$ is an abelian scheme over $S$;
\item There is a canonical isomorphism $$T_{p}(A\otimes_R M) \simeq T_{p}(A)\otimes_R M;$$
\item There is a canonical isomorphism of $\O_S$-modules $$Lie_S(A\otimes_R M)\simeq Lie_S(A)\otimes_R M.$$
\item  We have $$(A \otimes_R M)^{\vee} \simeq A^{\vee} \otimes_R M^{\vee},$$ where $A^{\vee}$ denotes the dual abelian scheme of $A$ and $M^{\vee}=Hom_R(M,R)$. 
\end{enumerate}
\end{lemma}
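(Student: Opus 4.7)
The plan is to establish the representability statement first and then read off properties (1)--(4) by reducing, locally on $\Spec R$, to the case where $M$ is a direct summand of a free $R$-module of finite rank.

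First I would handle representability. Since $M$ is finitely generated projective over $R$, it is locally free; Zariski-locally on $\Spec R$ we can pick an integer $n$ and an idempotent $e\in M_n(R)$ with $eR^n\simeq M$. For any $S$-scheme $T$ we then have a natural identification $A(T)\otimes_R M = e\cdot A(T)^n$, which is the image of the endomorphism $[e]\colon A^n\to A^n$ obtained from $e$ via the $R$-action on $A$. Since $[e]$ is an idempotent endomorphism of a smooth commutative group scheme, its image (equivalently the kernel of $1-[e]$) is representable by a closed subgroup scheme of $A^n$ which is a direct factor in the category of group schemes. Two choices of presentation $(n,e)$ produce canonically isomorphic representing objects because the functor $T\mapsto A(T)\otimes_R M$ is intrinsic, so these local constructions glue to a global group scheme $A\otimes_R M$ over $S$.

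Next I would deduce (1)--(3) from the fact that $A\otimes_R M$ is realised as a direct factor $[e](A^n)$. For (1), smoothness, properness and geometric connectedness of the fibres are inherited from those of $A^n$, so $A\otimes_R M$ is an abelian scheme. For (2) and (3), both $T_p(\cdot)$ and $\Lie_S(\cdot)$ are additive functors on the category of abelian schemes over $S$; consequently they transform the idempotent decomposition $A^n=[e](A^n)\oplus [1-e](A^n)$ into the corresponding idempotent decomposition on $T_p(A)^n$, resp.\ $\Lie_S(A)^n$. Extracting the $e$-components gives the canonical isomorphisms
\[
T_p(A\otimes_R M)\simeq T_p(A)\otimes_R M,\qquad \Lie_S(A\otimes_R M)\simeq \Lie_S(A)\otimes_R M.
\]

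Finally, for (4), I would invoke contravariant additivity of the duality functor $(-)^{\vee}$ on abelian schemes. Applying $(-)^{\vee}$ to the idempotent $[e]\colon A^n\to A^n$ yields the transposed idempotent $[e]^{t}$ acting on $(A^n)^{\vee}\simeq (A^{\vee})^n$; its image represents $A^{\vee}\otimes_R (eR^n)^{\vee}=A^{\vee}\otimes_R M^{\vee}$, and this image is by construction dual to $[e](A^n)=A\otimes_R M$.

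The main obstacle I anticipate is not any single step individually, but rather making the local-to-global gluing clean: one must verify that the representing scheme is independent of the choice of presentation $(n,e)$ and that the isomorphisms produced for (2)--(4) are canonical (in particular compatible with base change), so they can be glued. This is standard once one expresses everything through the intrinsic functorial description $T\mapsto A(T)\otimes_R M$, but it is the place where technical care is required; the rest of the argument is a formal consequence of additivity and the idempotent trick.
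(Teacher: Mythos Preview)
Your argument is correct and complete, but it takes a different route from the paper's. The paper realises $A\otimes_R M$ \emph{globally} as the kernel of a morphism $A^n\to A^m$ obtained from a presentation $R^m\to R^n\to M^{\vee}\to 0$ via the identification $A(T)\otimes_R M\simeq \operatorname{Hom}_R(M^{\vee},A(T))$, and then handles (1)--(4) largely by citation (smoothness, properness and geometric connectedness from \cite{conradgrosszagier}, and (3)--(4) from \cite{serretensor}; only (2) is argued directly, from left-exactness of $-\otimes_R M$). Your idempotent approach instead presents $A\otimes_R M$ as a direct summand of $A^n$, which has the pleasant feature that (1)--(4) drop out uniformly from additivity of the functors $T_p$, $Lie_S$ and $(-)^{\vee}$, with no external input. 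One simplification worth noting: a finite projective $R$-module is always \emph{globally} a direct summand of some $R^n$, so the idempotent $e\in M_n(R)$ can be chosen once and for all, and no Zariski-local gluing is required --- this dissolves exactly the ``main obstacle'' you anticipated.
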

\begin{proof}
We only give a sketch of the proof. Note that if $M$ is a free $R$-module of rank $n$, $\mathcal{F}$ is representable by $A^n$; more generally,  take a presentation \[\xymatrix{ R^m \ar[r]^{g} &R^n\ar[r] & M^{\vee} \ar[r] & 0,}\] and apply $Hom_R(-,A(T))$ to get \[\xymatrix{ 0 \ar[r] &Hom_R(M^{\vee},A(T))\ar[r] & A(T)^n \ar[r]^-{g_T} &  A(T)^m.}\] Since $A(T) \otimes_R M \simeq Hom_R(M^{\vee},A(T))$, we conclude that $\mathcal{F}$ is representable by the kernel of $(g_T)_T$.
 By \cite{conradgrosszagier} Theorem 7.2 and Theorem 7.5, the tensor construction preserves smoothness, properness and geometric connectedness of fibres, hence if $A$ is an abelian scheme so is $A\otimes_R M$. Property $(2)$ is a direct consequence of the fact that tensoring by $M$ is left exact (since $M$ is projective), hence $A[p^n]\otimes_R M \simeq  (A\otimes_R M)[p^n]$. $(3)$ is proved similarly (see \cite{serretensor}, Lemma 3); for a proof of $(4)$, we refer to Proposition 5 of \cite{serretensor}. 
  \end{proof}
We can now prove the main result of this section. 
Let $(\Am_{H,F},\lambda_{H,F},\alpha_{H, U \cap H,F})$ be the base-change to $F$ of the universal object  of $Y_H(U\cap H)$.
\begin{proposition}\label{onmoduli}
The abelian scheme $\Am_{H,F} \otimes_{\Z} \O_F/Y_H(U\cap H)_{/F}$ (and the extra structure) is identified with the pull-back by $\phi_{U}$ of the universal object $(\Am_{G,F},\lambda_{G,F},\iota_{F},\alpha_{G,U,F})/Y_G(U)_{/F}.$
\end{proposition}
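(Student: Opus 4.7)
The plan is to use the universal property of $Y_G(U)_{/F}$: starting from the base-change $(\mathcal{A}_{H,F},\lambda_{H,F},\alpha_{H,U\cap H,F})$ of the universal object of $Y_H(U\cap H)$, I will build a quadruple $(A',\lambda',\iota',\alpha'_U)$ of the kind parametrised by $\mathcal{L}_U$, and then argue that the classifying map of this quadruple is exactly $\phi_U$. The construction is essentially Serre's tensor construction with $R=\Z$ and $M=\O_F$ applied to every piece of the symplectic datum, and the PEL conditions translate into the unitary ones via the formula $H(x_1,x_2)=\langle x_1,y_F x_2\rangle + y_F\langle x_1,x_2\rangle$ defining the sesquilinear form on $\O_F^4$.

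Concretely, I set $A':=\mathcal{A}_{H,F}\otimes_{\Z}\O_F$, which is an abelian scheme of relative dimension $4$ by Serre's Tensor Construction; its $\O_F$-action $\iota'$ is tautological. The polarisation $\lambda_{H,F}$ tensored with the identity of $\O_F$ yields a map $A'\to (\mathcal{A}_{H,F})^{\vee}\otimes_{\Z}\O_F$, and after identifying $\O_F$ with $\O_F^{\vee}$ via the trace pairing (equivalently, the inverse different), property $(4)$ of Serre's construction gives an isomorphism with $(A')^{\vee}$; this produces the required principal polarisation $\lambda'$ satisfying the Rosati-compatibility $\iota'(a)^{\vee}\lambda'=\lambda'\iota'(\bar a)$, because the trace pairing sends the action of $a$ to the action of $\bar a$ on the dual. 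For the Lie algebra signature condition, I note that since the base $S$ is an $F$-scheme, there is a canonical decomposition $\O_F\otimes_{\Z}\O_S\simeq \O_S\oplus\O_S$ into the two characters $a\otimes 1\mapsto (f(a),f(\bar a))$; combined with part $(3)$ of Serre's construction this gives $\operatorname{Lie}_S(A')\simeq \operatorname{Lie}_S(\mathcal{A}_{H,F})\otimes_{\Z}\O_F \simeq \operatorname{Lie}_S(\mathcal{A}_{H,F})\oplus \operatorname{Lie}_S(\mathcal{A}_{H,F})$, with $y_F$ acting by $f(y_F)$ on one summand and $f(\bar y_F)$ on the other, so $\det(T-\iota'(z)\mid \operatorname{Lie}_S(A'))=(T-f(z))^2(T-f(\bar z))^2$ as required.

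For the level structure, I apply Serre's construction to the Tate module: using part $(2)$, $T(A')\simeq T(\mathcal{A}_{H,F})\otimes_{\Z}\O_F$, and a symplectic isomorphism $\alpha:\widehat{\Z}^4\to T(\mathcal{A}_{H,F})$ induces the $\O_F$-equivariant isomorphism $\alpha':=\alpha\otimes\mathrm{id}_{\O_F}:\widehat{\O}_F^4\to T(A')$. The decomposition $H(x_1,x_2)=\langle x_1,y_F x_2\rangle + y_F\langle x_1,x_2\rangle$ on $\O_F^4$ and the analogous relation between the $\O_F$-sesquilinear form on $T(A')$ induced by $\lambda'$ and the Weil pairing on $T(\mathcal{A}_{H,F})$ induced by $\lambda_{H,F}$ show that $\alpha'$ preserves the unitary pairings up to the same scalar $\beta_M$. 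Passing to $U$-orbits and using that $U$ meets $H(\widehat{\Z})$ in $U\cap H$ yields a unitary level $U$-structure $\alpha'_U$; the whole construction is functorial in $S$, so $(A',\lambda',\iota',\alpha'_U)$ is classified by a canonical $F$-morphism $\psi:Y_H(U\cap H)_{/F}\to Y_G(U)_{/F}$.

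The last point is to identify $\psi$ with $\phi_U$. The main obstacle is really this identification rather than any of the constructions: a priori $\psi$ and $\phi_U$ are two $F$-morphisms of smooth varieties, so it suffices to check they agree on complex points. On the level of Hermitian symmetric domains the embedding $\varphi:H\hookrightarrow G$ sends a symplectic complex structure $M$ on $\R^4$ to the same $M$ regarded as a unitary complex structure on $\O_F\otimes_{\Z}\R^4$, which is precisely the complex structure defining $\mathcal{A}_{H,F}\otimes_{\Z}\O_F$ at each Hodge-theoretic point; together with the fact that $\alpha'=\alpha\otimes\mathrm{id}_{\O_F}$ matches the adelic component under $\varphi(\Af)$, this shows $\psi(\C)=\phi_U(\C)$, hence $\psi=\phi_U$.
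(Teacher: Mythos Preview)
Your argument follows essentially the same strategy as the paper's: both build the unitary PEL datum on $\mathcal{A}_{H,F}\otimes_{\Z}\O_F$ via Serre's tensor construction and then identify the resulting classifying map $\psi$ with $\phi_U$. Two points deserve comment.

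First, your polarisation step has a genuine gap. The trace pairing $\O_F\to\O_F^{\vee}$, $a\mapsto\mathrm{Tr}_{F/\Q}(a\,\cdot\,)$ (or its conjugated variant) is injective but not surjective: it identifies $\O_F^{\vee}$ with the inverse different $\mathfrak{d}^{-1}$, and for an imaginary quadratic field $\mathfrak{d}\neq\O_F$. Hence $\lambda_{H,F}\otimes(\text{trace})$ is only an isogeny of degree a power of $|D_F|$, not a principal polarisation. The paper avoids this by taking instead a $\Z$-linear isomorphism $g:\O_F\to\O_F^{\vee}$ determined by the integral basis $\{1,y_F\}$, precomposed with conjugation so that $(\lambda_{H,F}\otimes g)\circ\iota(\bar z)=\iota(z)^{\vee}\circ(\lambda_{H,F}\otimes g)$ holds; you should make the same replacement.

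Second, for $\psi=\phi_U$ the paper checks agreement on the Zariski-dense set of special (CM) points, using directly the characterisation of canonical models via reciprocity at special points; you instead check on all complex points via the Hodge-theoretic uniformisation. Your route is a valid alternative, provided you invoke the compatibility of the moduli description of $Y_G(U)_{/F}(\C)$ with the complex uniformisation $G(\Q)\backslash X_G\times G(\Af)/U$ (standard, but this is precisely what the special-points description encodes). Both arguments reduce to the same Hodge-theoretic fact that $\varphi$ sends a symplectic Hodge structure to its $\O_F$-ification.
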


 \begin{proof}
 Consider the pull-back by  $\phi_{U}$ of $\Am_{G,F}$:
\[ \xymatrix{ ?  \ar[rrr] \ar[d] & & &\Am_{G,F} \ar[d]  \\ 
Y_H(U\cap H)_{/F} \ar[rrr]^{\phi_{U}} & & & Y_G(U)_{/F}. }\]
We would like to prove it is isomorphic to $\Am_{H,F} \otimes_{\Z} \O_F$. By the Lemma above, we have that $\Am_{H,F} \otimes_{\Z} \O_F$ is an abelian scheme over $Y_H(U \cap H)_{/F}$ of dimension 4; consider the action
$$\iota: \O_F \longrightarrow End_{Y_H(U \cap H)_{/F}}(\Am_{H,F} \otimes_{\Z} \O_F),$$ where $\iota(z)$ is given by multiplication by $z$ in the second factor (which is a well-defined endomorphism by \cite{serretensor} Proposition 2 (c)). Moreover, $\iota$ induces the required decomposition of $$\mathcal{V}:= Lie_{ Y_H(U \cap H)_{/F}}(\Am_{H,F} \otimes_{\Z} \O_F).$$ Indeed, $\mathcal{V}$ is a $\O_S \otimes_{\Z} \O_F$-module and one can consider the elements 

\begin{align*} x_1 &= f(y_F)\otimes 1 - 1 \otimes \iota(y_F), \\
x_2 &= f(\bar{y}_F)\otimes 1 - 1 \otimes \iota(y_F), \end{align*} 
where $f:F \to \O_S$ is the structure homomorphism and $\{1,y_F \}$ is an integral basis of $F$. The quotients $\mathcal{V}/(x_1)$ and $\mathcal{V}/(x_2)$ are the maximal quotients on which $\O_F$ acts respectively by the structure monomorphism and by its conjugate and since the discriminant $D_F$ is invertible in $\O_S$ we have \[\mathcal{V}=\mathcal{V}/(x_1) \oplus \mathcal{V}/(x_2).\]
We would like to show that $\lambda_{H,F}$ and the choice of $\iota$ give a $\O_F$-linear polarisation on $\Am_{H,F} \otimes_{\Z} \O_F$. This amounts to choose a $\Z$-linear isomorphism $g:\O_F \longrightarrow \O_F^{\vee}$, such that  \[ \lambda_{H,F} \otimes g:\Am_{H,F} \otimes_{\Z} \O_F \longrightarrow \Am_{H,F}^{\vee} \otimes_{\Z} \O_F^{\vee}  \simeq (\Am_{H,F} \otimes_{\Z} \O_F)^{\vee}\]
is compatible with $\iota$, in the sense that \[ (\lambda_{H,F}\otimes g) \circ \iota(\bar{z}) = \iota(z)^{\vee} \circ \lambda_{H,F} \otimes g.\]

The isomorphism $g$ is defined to be the composition on the right of $\bar{\bullet}:\O_F\to \O_F$ with the isomorphism $\O_F \to \O_F^{\vee}$ determined by the choice of an integral basis of $F$. Hence, $ \lambda_{H,F} \otimes g$ defines a polarisation (\cite{serretensor}, Theorem 17) compatible with the $\O_F$-action. Moreover, $\lambda_{H,F}\otimes g$ is an isomorphism.
We are left to show that there is a unitary level structure induced from the symplectic level structure $\alpha_{H,U \cap H, F}$. By part $(2)$ of Lemma above, there is an isomorphism \[ (\Am_{H,F} \otimes_{\Z} \O_F)[p^m] \simeq \Am_{H,F}[p^m] \otimes_{\Z} \O_F,\] hence a symplectic full $p^m$-level structure on $\Am_{H,F}$ induces a unitary full $p^m$-level structure on $$\Am_{H,F} \otimes_{\Z} \O_F.$$ Indeed, take a geometric point $s$ of $Y_H(U \cap H)_{/F}$; we consider the base-change of the symplectic level structure to $s$, so that we get a (symplectic) isomorphism \[ \alpha_s:(\Z/p^m\Z)^4 \longrightarrow \Am_{H,F,s}[p^m].\] 
Since the tensor product construction commutes with base-change, we have an isomorphism $$\alpha_s \otimes \O_F:(\O_F/p^m\O_F)^4 \longrightarrow (\Am_{H,F} \otimes \O_F)_s[p^m],$$ which is $\O_F$-equivariant and respects the form given by $ \langle \bullet , \bullet \rangle$ and the form given by composition of $\lambda_{H,F}\otimes g$ and Weil pairing, since $\iota$ coincide with the $\O_F$-module structure of $(\O_F/p^m\O_F)^4$. Thus, it follows that a $U \cap H$-level symplectic structure induces a $U$-level unitary structure.
Summing all, we constructed a point $\psi \in Y_G(U)_{/F}(Y_H(U\cap H)_{/F})$. In particular, for any locally Noetherian $F$-scheme $S$, $\psi(S)$ is described
by sending the isomorphism class of $(A,\lambda,\eta_{U\cap H})/S$ to the isomorphism class of $(A\otimes_{\Z} \O_F, \lambda \otimes g, \iota, \eta_{U})/S$. 
This morphism corresponds uniquely to a $Gal(\bar{\Q}/F)$-equivariant morphism $\psi_{\bar{\Q}}$ of $Y_H(U \cap H)_{/\bar{\Q}} \to Y_G(U)_{/\bar{\Q}}$  (\cite{moonen} 2.15.1), which is equal to the descent to $\bar{\Q}$ of $\phi_{U}$. This can be checked by evaluating the two morphisms on the set of special points, which forms a Zariski dense set of $Y_H(U \cap H)(\C)$. Recall that each special point of $(h,g) \in Y_H(U \cap H)(\C)$ is associated to a $\Q$-torus $T$ in $H$ with the property that $h\in X_H$ factors through $T_{/\R}$ (here $(H,X_H)$ is the Shimura datum of $Y_H$). We denote it by $s_T$. Then, $\phi(s_T)$ is the special point of $Y_G(U)(\C)$, associated to $T$, where $T$ is seen inside $G$ via $\varphi:H \to G$.  On the other hand, $s_T$ corresponds to (an isomorphism class of) a CM abelian variety $A=A_T$ with polarisation $\lambda$ and symplectic level $U \cap H$ structure $\eta_{U \cap H}$. Thus, \[ \psi(s_T)=(A \otimes_{\Z} \O_F,\iota, \lambda \otimes g, \eta_U). \]
This coincides with $\phi(s_T)$, which corresponds to the $\O_F$-ification of the polarised Hodge structure corresponding to $A$. This completes the proof, since the equality $\psi_{\bar{\Q}}=\phi_{U,\bar{\Q}}$ implies that $\phi_U$  and $\psi$ coincide as morphisms of the models of the Shimura varieties over $F$.
\end{proof}

\begin{corollary} The morphism $\phi_{U}:Y_H(U \cap H)_{/F} \longrightarrow Y_G(U)_{/F}$ is given by sending the $S$-point $(A,\lambda,\eta)\in Y_H(U \cap H)_{/F}(S)$ to the $S$-point $(A\otimes_{\Z} \O_F, \lambda \otimes g, \iota, \eta') \in Y_G(U)_{/F}(S)$, where $\iota:\O_F \longrightarrow End_S(A\otimes_{\Z} \O_F)$, $\lambda \otimes g$ and $\eta'$ are defined as in the proof of Proposition \ref{onmoduli}.
\end{corollary}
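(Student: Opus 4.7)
The corollary is essentially an unpacking of Proposition \ref{onmoduli} using the moduli-theoretic nature of $Y_G(U)_{/F}$ and $Y_H(U \cap H)_{/F}$. My plan is the following.

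First, recall that for $U$ neat, ${Y_G(U)}_{/F}$ represents the functor $\mathcal{L}_U$ on locally Noetherian $F$-schemes, and similarly $Y_H(U \cap H)_{/F}$ represents its symplectic analogue. In particular, a morphism $Y_H(U \cap H)_{/F} \to Y_G(U)_{/F}$ of $F$-schemes is, by Yoneda, determined by a natural transformation of functors, i.e.\ by a rule sending $(A, \lambda, \eta) \in Y_H(U \cap H)_{/F}(S)$ to a quadruple $(B, \lambda', \iota', \eta') \in Y_G(U)_{/F}(S)$ functorially in $S$.

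The strategy is therefore to show that the rule $\psi$ defined by $(A, \lambda, \eta) \mapsto (A \otimes_{\Z} \O_F, \lambda \otimes g, \iota, \eta')$ --- with $\iota$, $g$, and $\eta'$ constructed as in the proof of Proposition \ref{onmoduli} --- agrees with the natural transformation attached to $\phi_U$. By Proposition \ref{onmoduli}, applied to the identity $S$-point of $Y_H(U \cap H)_{/F}$ where $S = Y_H(U \cap H)_{/F}$ itself, we already know this for the universal quadruple $(\Am_{H,F}, \lambda_{H,F}, \alpha_{H, U \cap H, F})$: its image under $\phi_U$ is the universal object $(\Am_{G,F}, \lambda_{G,F}, \iota_F, \alpha_{G,U,F})$, which by that proposition is canonically isomorphic to the Serre tensor quadruple.

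Now an arbitrary $S$-point $(A, \lambda, \eta) \in Y_H(U \cap H)_{/F}(S)$ is, by representability, the pullback of the universal object along a unique morphism $f_{(A,\lambda,\eta)} \colon S \to Y_H(U \cap H)_{/F}$. Since the Serre tensor construction and its auxiliary data (the action $\iota$, the polarisation $\lambda \otimes g$, and the induced unitary level structure $\eta'$) all commute with base change along $f_{(A,\lambda,\eta)}$ --- this was already used in the proof of Proposition \ref{onmoduli} to transfer the level structure, and for the polarisation and $\O_F$-action it is immediate from the formation of tensor products --- applying $\phi_U \circ f_{(A,\lambda,\eta)}$ yields precisely $(A \otimes_{\Z} \O_F, \lambda \otimes g, \iota, \eta')$.

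The only step that requires any care is the compatibility of the unitary level structure $\eta'$ with base change, but this was implicit in the construction in Proposition \ref{onmoduli} (done at geometric points, hence functorial). No further computation is needed; the corollary is just the functorial/$S$-pointwise restatement of Proposition \ref{onmoduli}.
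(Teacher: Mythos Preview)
Your argument is correct and is essentially what the paper intends: the corollary carries no separate proof in the paper because the proof of Proposition \ref{onmoduli} already constructs the natural transformation $\psi$ on $S$-points by the Serre-tensor rule and then shows $\psi=\phi_U$, so the corollary is a direct restatement. Your Yoneda/base-change unpacking is exactly this reasoning made explicit.
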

\begin{remark} Proposition \ref{onmoduli} and its proof also work in the case where $\phi$ is the morphism of Shimura varieties associated to the natural embedding $\GSp_{2n} \hookrightarrow \operatorname{GU}(n,n)$.
\end{remark}
\section{Trace compatible classes for $\operatorname{GU}(2,2)$}\label{twovarfamily}
In the following we explain how to construct \'etale cohomology classes for the Shimura variety $Y_G$ starting from Eisenstein classes for $H$. We prove that the resulting classes are trace compatible with respect to a two variable family of level subgroups of $G(\widehat{\Z})$.

\subsection{Compatibility in the mira-Klingen tower}
Associated to the closed immersion $\phi_{V}:Y_H(V \cap H) \longrightarrow Y_G(V),$ (for good $V$) there is the \'etale Gysin map  \[ \phi_{V,*}:H^3_{\et}(Y_H(V \cap H), \Z_{p}(2)) \longrightarrow  H^{5}_{\et}(Y_G(V), \Z_p(3)). \]  
We show that the push-forward under these maps of the Eisenstein classes for $H$ is compatible in the mira-Klingen tower at $p$ for $G$. Recall the following.
\begin{lemma} \label{kisin}
Let $V_{q} \subset G(\Q_{q})$ be a sufficiently small open compact subgroup and let $U=U_{q}U^{({q})}$ be an open compact subgroup of $H(\Af)$ such that $U_{q}= V_{q} \cap H(\Q_{q})$; then \begin{enumerate} 
\item there exists a compact open subgroup $V=V_{q}V^{({q})}$ of $G(\Af)$ with $U \subset V$ and such that $\varphi$ induces a closed immersion \[Y_H(U) \hookrightarrow Y_G(V);\] 
\item let $p \ne q$ be any prime such that $V^{(q)}$ has trivial component at $p$; for any open compact $V_{p}\subset G(\Q_{p})$ with $U_p= V_p \cap H(\Q_p)$, the morphism \[Y_H(U_{q}U_{p}U^{(q)}) \longrightarrow Y_G(V_{q}V_{p}V^{(q)}) \] is still a closed immersion.
\end{enumerate}
\end{lemma}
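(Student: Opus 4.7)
The plan is to reduce both parts of the lemma to the statement that, for a suitably chosen open compact subgroup $V$ of $G(\Af)$ satisfying $V \cap H(\Af) = U$, the natural morphism $\phi : Y_H(U) \to Y_G(V)$ induced by $\varphi$ is a closed immersion. Part (1) then becomes the problem of constructing $V^{(q)}$ with the right intersection property, and part (2) is a base-change argument.

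For the construction of $V^{(q)}$, I would proceed prime-by-prime. Writing $U^{(q)} = \prod_{\ell \ne q} U_\ell$, one has $U_\ell = H(\Z_\ell)$ for almost all $\ell$; at such primes the compatibility of the integral structures (both $H$ and $G$ are $\Z$-group schemes and $\varphi$ is a closed immersion defined over $\Z$) yields $G(\Z_\ell) \cap H(\Q_\ell) = H(\Z_\ell)$, so $V_\ell := G(\Z_\ell)$ works. At the finitely many remaining primes, $H(\Q_\ell)$ sits inside $G(\Q_\ell)$ with the induced topology, so a short argument using a neighborhood basis of the identity in $G(\Q_\ell)$ consisting of open compact subgroups produces an open subgroup $V_\ell$ of $G(\Q_\ell)$ with $V_\ell \cap H(\Q_\ell) = U_\ell$. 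Setting $V^{(q)} := \prod_{\ell \ne q} V_\ell$ and $V := V_q V^{(q)}$ gives $V \cap H(\Af) = U$.

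Having arranged this, I would prove $\phi$ is a closed immersion by combining three ingredients: $\phi$ is finite (since $\varphi : H \hookrightarrow G$ is a closed immersion of group schemes, and the moduli description via Proposition \ref{onmoduli} inherits this); $\phi$ is unramified (Proposition \ref{onmoduli} identifies the pull-back of the universal abelian scheme with $\mathcal{A}_H \otimes_\Z \O_F$, so the map is formally unramified on tangent spaces); and $\phi$ is injective on geometric points. The last point is the crux: if two classes $[h_1, g_1], [h_2, g_2]$ in $Y_H(U)(\C)$ have the same image in $Y_G(V)(\C)$, then there exist $\gamma \in G(\Q)$ and $v \in V$ with $h_2 = \gamma h_1$ and $\varphi(g_2) = \gamma \varphi(g_1) v$. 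Since $h_1, h_2 \in X_H$ while the conjugation relation holds in $X_G$, the element $\gamma$ normalises $\varphi(H) \subset G$; for $V$ sufficiently small (which may be arranged while preserving $V \cap H(\Af) = U$, e.g.\ by further shrinking at an auxiliary prime where $U_\ell = H(\Z_\ell)$) a double-coset counting argument forces $\gamma \in \varphi(H(\Q))$, and the two classes coincide in $Y_H(U)(\C)$. Combining finiteness, unramifiedness, and injectivity on geometric points yields that $\phi$ is a closed immersion.

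For part (2), the key observation is that the square
\[
\xymatrix{
Y_H(U_q U_p U^{(q)}) \ar[r] \ar[d] & Y_G(V_q V_p V^{(q)}) \ar[d] \\
Y_H(U_q U^{(q)}) \ar[r] & Y_G(V_q V^{(q)})
}
\]
is Cartesian: the vertical arrows are the finite \'etale covers imposing level at $p$, and the triviality of $V^{(q)}$ at $p$ together with the hypothesis $U_p = V_p \cap H(\Q_p)$ ensures the fibre product of $Y_H(U_q U^{(q)})$ with $Y_G(V_q V_p V^{(q)})$ over $Y_G(V_q V^{(q)})$ is exactly $Y_H(U_q U_p U^{(q)})$. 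Since the bottom horizontal arrow is a closed immersion by (1) and closed immersions are stable under base change, the top horizontal arrow is also a closed immersion. The main obstacle is the injectivity step in (1): the prime-by-prime construction of $V^{(q)}$ is essentially formal, but controlling the rational points of $G(\Q)$ modulo $V$ in order to force $\gamma \in \varphi(H(\Q))$ requires careful shrinking of $V^{(q)}$ at auxiliary primes while maintaining the intersection identity $V^{(q)} \cap H(\A_f^{(q)}) = U^{(q)}$.
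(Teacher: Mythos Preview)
Your approach to part (2) via a Cartesian square has a genuine gap: the square you write down is \emph{not} Cartesian in general. The fibre of $Y_G(V_q V_p V^{(q)}) \to Y_G(V)$ over a point has cardinality $[G(\Z_p):V_p]$, while the fibre of $Y_H(U_q U_p U^{(q)}) \to Y_H(U)$ has cardinality $[H(\Z_p):U_p]$; since $U_p = V_p \cap H(\Q_p)$, the natural map $H(\Z_p)/U_p \to G(\Z_p)/V_p$ is injective but typically far from surjective (equivalently, $H(\Z_p) \cdot V_p \subsetneq G(\Z_p)$). Thus the preimage of $\phi(Y_H(U))$ in $Y_G(V_q V_p V^{(q)})$ is strictly larger than the image of $Y_H(U_q U_p U^{(q)})$, and base-change of the bottom closed immersion does not give the top arrow.

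The paper's proof of (2) is quite different and avoids this trap entirely. Given two points $z,z'$ of $Y_H(U_q U_p U^{(q)})$ with the same image in $Y_G(V_q V_p V^{(q)})$, one writes $z = z'\cdot v$ for some $v \in V_q V_p V^{(q)}$. Projecting further to $Y_G(V)$ and invoking part (1), one also has $z = z' \cdot u$ for some $u \in U$. Then $vu^{-1}$ fixes $z'$; since $V$ is neat it acts freely, forcing $v = u \in H(\Af)$, whence $v \in V_q V_p V^{(q)} \cap H(\Af) = U_q U_p U^{(q)}$. This is a direct injectivity argument exploiting fixed-point-freeness, not a base-change argument.

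For part (1), the paper simply invokes \cite[Lemma 2.1.2]{kisin} as a black box. Your direct attempt is more ambitious, but the injectivity step is not correct as written: from $h_2 = \gamma h_1$ with $h_1,h_2 \in X_H$ and $\gamma \in G(\Q)$ you cannot conclude that $\gamma$ normalises $\varphi(H)$ --- this only says $\gamma$ moves one point of $X_H$ to another, which is much weaker. The actual content of Kisin's lemma is precisely the careful shrinking of $V^{(q)}$ needed to rule out such $\gamma$ outside $\varphi(H(\Q))$, and your sketch does not supply this.
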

\begin{proof}
$(1)$ is a particular case of \cite{kisin}, Lemma 2.1.2. Now, suppose that $z,z' \in Y_H$ have same image in $Y_G(V_{q}V_{p}V^{(q)})$, i.e. there is $v \in V_{q}V_{p}V^{(q)}$ such that $z=z' \cdot v$. We claim that $v$ lies in $U_{q}U_{p}U^{(q)}$. \newline Indeed, since $z,z'$ have same image in $Y_G(V_{q}V_{p}V^{(q)})$, they map to the same element in $Y_G(V)$. Thus, by $(1)$, there exists $u \in U$ such that $z=z' \cdot u$. Hence, \[z=z' \cdot u=z' \cdot v\] implies that $vu^{-1}$ fixes $z'$. Since $V$ acts without fixed points, we have that $vu^{-1}=1$. In particular, we conclude that $v \in H$ and, consequently,  $v \in U_{q}U_{p}U^{(q)}$, which completes the proof of $(2)$.  \end{proof}
 Consider the following subgroups of $G(\Z_p)$.
\begin{definition} For any integer $r \geq 1$, define the subgroup $V_1(p^r) \subset G(\Z_p)$ as follows: \begin{eqnarray}
V_1(p^r) &:=  &\{ M \in  G(\Z_p) | R_{4}(M) \equiv (0, \cdots, 0,1) \text{ mod }p^r\O_F \} 
\end{eqnarray} 
where $R_i(M)$ denotes the $i$-th row of $M$.
If $N= \prod p_i^{e_i}$, then $V_1(N) \subset G(\widehat{\Z})$ is defined to be the subgroup of elements $(g_p)_p$ such that $g_{p_i}\in V_1(p_i^{e_i})$.
\end{definition}

Note that $V_1(p^r) \cap H(\Q_p)=U_1(p^r)$, which was defined in Definition \ref{symplecticklingen}. 
\begin{remark} Similarly to the symplectic case, unitary $V_1(p^r)$-level structures correspond to $p^r\O_F$-points. More precisely, let $\mathfrak{P}$ be an ideal of $\O_F$ lying above $p$ and define a $\mathfrak{P}^r$-point of an abelian scheme $A/S$ with $\O_F$-action $\iota:\O_F \longrightarrow End_S(A)$ to be an $\O_F$-linear monomorphism \[ (\O_F/\mathfrak{P}^r)_{/S} \hookrightarrow A[p^r].\] If $p$ is inert in $\O_F$, a $p^r\O_F$-point is just a point of exact order $p^r$. Indeed, if $P \in A(S)$ is a point of exact order $p^r$, it is killed by $\iota(a)$ for all $a \in p^r\O_F$ because $p^r\O_F=(p)^r$ and $\iota(a)P=\iota(a'p^r)P=\iota(a')[p^r]P=0$, for all $a=a'p^r \in p^r\O_F$. Thus, $P$ defines a monomorphism $$(\O_F/p^r\O_F)_{/S} \hookrightarrow A[p^r],$$ defined on each geometric fibres (corresponding to $s \to S$) by sending 1 to $P_s$ and identifying $\O_F/p^r\O_F$ with $\langle i(a)P_s \rangle_{a \in \O_F}$. On the other hand, any $p^r\O_F$-point is determined by the the point of exact order $p^r$ obtained by the image of $1$. In the case where $p$ splits in $F$, say $p=\mathfrak{P}\bar{\mathfrak{P}}$, then a $p^r\O_F$-point $P$ corresponds to $P_1$ and $P_2$ where \begin{itemize}
\item $P_1$ is a point of exact order $p^r$ which is killed by $\iota(a)$ for all $a \in \mathfrak{P}^r$, 
\item $P_2$ is a point of exact order  $p^r$ which is killed by $\iota(a)$ for all $a \in \bar{\mathfrak{P}}^r$.
\end{itemize}
\end{remark}
We can now state the unitary analogue of Lemma \ref{pointsandls}.
\begin{lemma} Let $(A,\lambda, \iota)$ be an abelian scheme of relative dimension 4 over an $F$-scheme $S$ with $\O_F$-module structure and principal polarisation. There is a bijection between unitary level $V_1(N)$-structures and $N\O_F$-points.
\end{lemma}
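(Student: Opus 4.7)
My approach is to mimic the proof of Lemma \ref{pointsandls}, replacing symplectic structures with unitary ones and $\Z$ with $\O_F$. The essential bijection identifies an $N\O_F$-point $t:(\O_F/N\O_F)_{/S} \hookrightarrow A[N]$ with the image of the last standard basis vector $e_4=(0,0,0,1)$ under a full unitary $N$-level structure $\alpha:(\O_F/N\O_F)^4_{/S} \to A[N]$. The point is that $V_1(N)/K(N)$ is, essentially by definition of $V_1(N)$ via the condition $R_4(M)\equiv(0,\ldots,0,1)$, precisely the stabilizer of $e_4$ in the natural $G(\O_F/N\O_F)$-action on $(\O_F/N\O_F)^4$.

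First, by the same standard reductions used for Lemma \ref{pointsandls} (finite presentation, connected components, \'etale sheaves commuting with coproducts), we may assume $S$ is a connected locally Noetherian $F$-scheme, and after replacing $S$ by a finite \'etale surjective cover, we fix an $\O_F$-equivariant isomorphism $A[N] \simeq (\O_F/N\O_F)^4_{/S}$ compatible with the principal polarisation in the sense of Definition \ref{unitarylevel}. Given an $N\O_F$-point $t$, we complete it to an $\O_F$-equivariant pairing-preserving isomorphism $\alpha_N:(\O_F/N\O_F)^4_{/S} \to A[N]$ whose restriction to the $\O_F$-summand generated by $e_4$ is precisely $t$. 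Any two such completions differ by an element of $V_1(N)/K(N)$, giving a well-defined orbit at level $N$.

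To produce a full $V_1(N)$-level structure, that is, a compatible collection $\{\alpha_{V_1(N)_M}\}_M$ indexed by all $M$ with $K(M)\subset V_1(N)$, we lift $t$ iteratively: at each stage, after a further \'etale surjective cover, $t$ lifts to an $NM\O_F$-point (this uses finite \'etaleness of the multiplication map on $A[NM]$), and the same extension argument produces a $V_1(N)/K(NM)$-orbit of full level-$NM$ unitary structures, with compatibility under reduction modulo divisors automatic from the iterative construction. Conversely, a $V_1(N)$-level structure determines a canonical $\pi_1$-invariant $V_1(N)/K(N)$-orbit $\alpha_{V_1(N)_N}$, and the image of $e_4$ under any representative is a well-defined $N\O_F$-point of $A$.

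The main technical obstacle lies in the extension step: showing that any $\O_F$-linear monomorphism $(\O_F/N\O_F)_{/S}\hookrightarrow A[N]$ can be completed to a full $\O_F$-equivariant isomorphism preserving the pairing from Definition \ref{unitarylevel}, uniquely up to the action of $V_1(N)/K(N)$. This is a unitary Witt-type statement over the finite rings $\O_F/N\O_F$: since the pairing matrix $J$ pairs $e_4$ non-trivially with $e_1$, any lift of $e_4$ admits a hyperbolic partner, and the orthogonal complement carries the remaining rank-$2$ unitary summand. The argument bifurcates slightly depending on whether the primes dividing $N$ split or remain inert in $F$ (since $\O_F/N\O_F$ has different shape in each case, as explained in the remark preceding the lemma), but in both settings the transitivity of the unitary group on compatible completions follows from the nondegeneracy of the induced pairing, ensuring the orbit is well-defined and the bijection works.
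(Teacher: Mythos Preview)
Your proposal is correct and takes essentially the same approach as the paper: the paper's proof consists solely of the sentence ``It follows from a straightforward modification of the proof of Lemma \ref{pointsandls},'' and what you have written is precisely that modification spelled out in detail. Your identification of $V_1(N)/K(N)$ as the stabiliser of $e_4$, the reduction steps, and the iterative lifting argument all match the structure of the symplectic case that the paper is invoking.
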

\begin{proof} 
It follows from a straightforward modification of the proof of Lemma \ref{pointsandls}.
\end{proof}

We are interested in showing a trace compatibility relation of the push-forward of the Eisenstein classes for $Y_H$ in the $p$-direction. \\  Let $V_N$ be the subgroup $V_1(N)V^{(N)} \subset G(\widehat{\Z})$, where $V^{(N)} \subset G(\widehat{\Z}^{(N)})$ is a sufficiently small open compact subgroup which satisfies the hypotheses of Lemma \ref{kisin} (for a suitable prime $q \nmid N$). 
\begin{remark} Note that the level subgroup $V_N$ is trivial outside a finite set of primes $\Sigma_{V_N} \ni q$. 
\end{remark}
Denote by $\phi_{N}$ the closed immersion \[Y_H(H \cap V_{N}) \longrightarrow Y_G(V_{N}).\]

\begin{definition}
Define ${_c\mathscr{Z}_{N}}$ to be the \'etale cohomology class defined by \[\phi_{N,*}(\Eis_{m,N}^2)\in H^{5}_{\et}(Y_G(V_{N}), \Z_{p}(3)).\] 
\end{definition}
We show that these classes are compatible under the trace $\operatorname{Tr}_{\pi_\ell'}$ (in \'etale cohomology) of the natural degeneracy map $\pi_\ell':Y_G(V_{N\ell}) \to Y_G(V_{N})$.
 
\begin{proposition}\label{klingenunitary}
Let $\ell,N$ be coprime with $c$ and let $\ell \not \in \Sigma_{V_N}$. Then, \[ Tr_{\pi_\ell'}({_c\mathscr{Z}_{N\ell}})= \begin{cases} {_c\mathscr{Z}_{N}}  & \text{ if }\ell \mid N; \\ 
\left( id - \varphi(\begin{smallmatrix} r I & 0 \\ 0 & r I \end{smallmatrix})^* \right){_c\mathscr{Z}_{N}} & \text{ if } \ell \nmid N; \end{cases}\]
 where $r$ denotes the inverse of $N$ modulo $\ell$.\end{proposition}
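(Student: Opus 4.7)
The plan is to deduce the proposition as a direct application of Corollary \ref{rel1g}, specialised to the embedding $\varphi: H = \GSp_4 \hookrightarrow G = \operatorname{GU}(2,2)$ and to the unitary level $V_N \subset G(\widehat{\Z})$ with its pullback to $H(\widehat{\Z})$. Two pieces of bookkeeping must be verified: first, the pullback level structure agrees with $U_N$ in the sense of Definition \ref{newdeflev}; and second, the closed-immersion and neatness hypotheses needed to form the push-forward $\phi_{N\ell, *}$ are satisfied at the next level.

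For the first point, at a prime $\ell \mid N$ I would observe that for $M \in \GSp_4(\Z_\ell) \subset G(\Z_\ell)$, the congruence $R_4(M) \equiv (0,\ldots,0,1) \bmod \ell^{v_\ell(N)}\O_F$ is equivalent to the corresponding symplectic congruence modulo $\ell^{v_\ell(N)}$ in $\Z_\ell$, giving the identity $V_1(\ell^{v_\ell(N)}) \cap \GSp_4(\Z_\ell) = U_1(\ell^{v_\ell(N)})$. At primes in $\Sigma_{V_N}$, the neatness of $V^{(N)}$ combined with Lemma \ref{kisin}(1) ensures $\phi_N$ is a closed immersion of codimension $d = 2$; and the standing hypothesis $\ell \notin \Sigma_{V_N}$ is exactly what is needed to invoke Lemma \ref{kisin}(2) with the appropriate local component at $\ell$, promoting this to the closed immersion $\phi_{N\ell}: Y_H(U_{N\ell}) \hookrightarrow Y_G(V_{N\ell})$. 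The degeneracy square
\[ \xymatrix{ Y_H(U_{N\ell}) \ar[r]^-{\phi_{N\ell}} \ar[d]_{\pi_\ell} & Y_G(V_{N\ell}) \ar[d]^{\pi_\ell'} \\ Y_H(U_N) \ar[r]^-{\phi_N} & Y_G(V_N) } \]
then commutes by construction (using the moduli-theoretic description of Proposition \ref{onmoduli}), which is precisely the input required by Corollary \ref{rel1g}. The twisting operator on the right-hand side is the image of $(\begin{smallmatrix} rI & 0 \\ 0 & rI \end{smallmatrix})$ under $\varphi$, as in the conclusion of that corollary.

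The main obstacle, as I see it, is not the trace identity itself, which is already packaged in Proposition \ref{tracecompfalt} and propagated to general push-forwards in Corollary \ref{rel1g}: it is the careful verification that the levels $V_N$ and $V_{N\ell}$ remain sufficiently small (which uses the choice of auxiliary prime $q \in \Sigma_{V_N}$ and the neatness built into $V^{(N)}$) and that their pullbacks along $\varphi$ reproduce the symplectic tower $U_N$, $U_{N\ell}$ of Definition \ref{newdeflev}. Once this bookkeeping is dispatched, the proposition is a formal corollary of the symplectic trace compatibility established in Proposition \ref{tracecompfalt}.
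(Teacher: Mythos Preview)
Your proposal is correct and follows exactly the paper's own approach: the paper's proof consists of the single sentence ``It follows from Corollary \ref{rel1g}.'' You have simply unpacked the hypotheses (pullback of levels, closed-immersion via Lemma \ref{kisin}, commutativity of the degeneracy square) that make that corollary applicable, which is more detail than the paper supplies but entirely in the same spirit.
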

\begin{proof} 
It follows from Corollary \ref{rel1g}.
\end{proof}

While the compatibility of these classes in the tower of level subgroups $\{ V_N \}_N$ is a natural consequence of the trace compatibility relations of the Eisenstein classes, by using a more sophisticated method, in the next section we show how to obtain trace compatibility relations in a "two variable" tower of level subgroups $ \{V_{N,M} \}$.    

\subsection{Perturbing the embedding} 
 When $p \mid N$, we showed that the push-forward of Eisenstein classes defines an element \[{_c\mathscr{Z}_{Np^{\infty}}} \in \varprojlim_i H^{5}_{\et}(Y_G(V_{Np^i}), \Z_{p}(3)), \] where the limit is taken with respect to traces of the natural degeneracy maps $Y_G(V_{Np^{i+1}}) \to Y_G(V_{Np^i})$. In order to improve this result, it is necessary to enrich our  push-forward classes with extra structure. This is done by employing the action of $G(\Af)$ on $Y_G$, as we see below in Definition \ref{twistedembclass}. This idea has already been successfully used in the constructions of \cite{LLZ1}, \cite{LLZ2}, \cite{LSZ1}, and \cite{CR}. The method to prove extra trace compatibility relations used here is an adaptation to this setting of the one used in the proof of the vertical norm relation of the Beilinson-Flach Euler system as in \cite[Theorem 5.4.1]{KLZ} and its generalisation in \cite{LZvert}. \\ 
 In the rest of the section we suppose that $p \not \in \Sigma_{V_N}$ is inert or split in the imaginary quadratic field $F$ and that $N$ is an integer coprime with $p$. We now define the tower of level subgroups we are interested in.  
Let $\eta$ be the cocharacter of the maximal torus of $G$ defined by \[x \mapsto \left( \begin{smallmatrix} x^3 & {} & {} &   \\ & x^2 &   & \\  &   & x   &{} \\ & & & 1 \end{smallmatrix} \right) \] Let $\eta_p=\eta(p)\in G(\Q_p)$. It defines an open compact subgroup of $V_{N,1}' \subset V_{N}$ by requiring that $V_{N,1}'$ is the largest subgroup such that we have \[ \eta_p:Y_G(V_{N,1}') \longrightarrow Y_G(V_{N}). \] Moreover, let  $V_{N,p} \subset V_{N,1}'$ be the subgroup given by the intersection of  $V_{N,1}'$ with \[ \{g \in G(\Z_p):\quad g \equiv \left(\begin{smallmatrix} * & & & \\  & 1 & & \\ & & * & \\ & & & 1 \end{smallmatrix} \right) \text{  mod }p \}\]
Reiterating the procedure, we define subgroups of $G(\A_f)$   \begin{itemize} \item $V_{Np^n,p^m}':=V_{Np^n} \cap \eta_p^{m+1}V_{Np^n} \eta_p^{-(m+1)} \cap \{g \in G(\Z_p):\quad g \equiv \left(\begin{smallmatrix} * & & & \\  & 1 & & \\ & & * & \\ & & & 1 \end{smallmatrix} \right) \text{  mod }p^m \}$;
 \item $V_{Np^n,p^{m+1}}:=V_{Np^n,p^m}' \cap\{g \in G(\Z_p):\quad g \equiv \left(\begin{smallmatrix} * & & & \\  & 1 & & \\ & & * & \\ & & & 1 \end{smallmatrix} \right) \text{  mod }p^{m+1} \}$. 
\end{itemize}
Then, inductively define $V_{Np^n,Mp^{m}}$ for $M$ coprime to $\Sigma_{V_N}$.
\begin{remark}
Concretely, for $n>3m$, $V_{Np^n,Mp^{m}}$ has component at $p$ \[ \bigg\{ g \in G(\Z_p) : g \equiv I  \text{ mod } \left[ \begin{smallmatrix} 1 & p^{m} & p^{2m} &  p^{3m} \\ p^n & p^{m} & p^{m} & p^{2m} \\ p^n & p^{m} & 1 & p^{m} \\ p^n & p^n & p^n & p^{n}  \end{smallmatrix}  \right] \O_F \bigg\}. \] 
\end{remark}
 For $u\in G(\Af)$, let $\phi_V^u$ be the composition  \[ u \circ \phi_{uVu^{-1}}: Y_H(H \cap uVu^{-1}) \longrightarrow Y_G(uVu^{-1})\longrightarrow Y_G(V), \] where the second arrow is given by right multiplication by $u$. 
 In particular, consider $u \in G(\Af)$ such that \begin{enumerate} \item $H \cap uV_{Np^n,Mp^m}u^{-1} \subset U_{Np^n},$ \item $\phi_{uV_{Np^n,Mp^m}u^{-1}}$ is a closed immersion. 
 \end{enumerate} 
 \begin{remark} The two conditions are satisfied by $u \in V_{Np^n}$  with trivial components at places in $\Sigma_{V_N}$. 
 \end{remark}
 For such $u$, we consider the pull-back of $\Eis_{Np^n}$ to the cohomology group of $Y_H(H \cap uV_{Np^n,Mp^m}u^{-1})$ and we again denote by $\Eis_{Np^n}$.

  \begin{definition}\label{twistedembclass}
For $u\in G(\Af)$ satisfying the above conditions, define \[ _c\mathscr{Z}_{Np^n,Mp^m,u}:= \phi_{V_{Np^n,Mp^m,*}}^u ( \Eis_{Np^n}) \in H_{\et}^{5}(Y_G(V_{Np^n,Mp^m}) ,\Z_{p}(3)). \]
\end{definition}
We can finally state the main theorem of the section.

\begin{theorem}\label{finalcongruences} Suppose $n\geq 3m+3$ and let $m \geq 1$. \begin{enumerate} 
\item Let $\pi_p: Y_G(V_{Np^{n+1},Mp^{m}}) \longrightarrow  Y_G(V_{Np^{n},Mp^{m}})$ be the natural degeneracy map, then 
\[ \operatorname{Tr}_{\pi_p}({}_c\mathcal{Z}_{Np^{n+1},Mp^m,u})={}_c\mathcal{Z}_{Np^n,Mp^m,u}.\]
\item Let $\mathcal{U}_p$ be the Hecke operator associated to $\eta_p$ and let $\operatorname{Tr}_{f_p}$ be the trace map associated to \[f_p:Y_G(V_{Np^n,Mp^{m+1}}) \longrightarrow Y_G(V_{Np^n,Mp^{m}}') \longrightarrow Y_G(V_{Np^n,Mp^{m}}),\] where the first arrow is the natural degeneracy map and the second is right multiplication by $\eta_p$. There exists $u \in G(\Af)$ such that 
\[ \operatorname{Tr}_{f_p}({}_c\mathcal{Z}_{Np^n,Mp^{m+1},u})=\mathcal{U}_p\cdot {}_c\mathcal{Z}_{Np^n,Mp^m,u}.\]
\end{enumerate}
\end{theorem}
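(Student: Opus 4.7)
For part (1), the plan is to reduce to the trace compatibility on $Y_H$ established in Proposition \ref{tracecompfalt}. The key step is to verify that the diagram
$$\xymatrix@C=3.2em{ Y_H(H \cap u V_{Np^{n+1},Mp^m} u^{-1}) \ar[r]^-{\phi^u} \ar[d]_{\pi_p} & Y_G(V_{Np^{n+1},Mp^m}) \ar[d]^{\pi_p} \\ Y_H(H \cap u V_{Np^n,Mp^m} u^{-1}) \ar[r]^-{\phi^u} & Y_G(V_{Np^n,Mp^m}) }$$
is Cartesian; this is a moduli-theoretic check analogous to Lemma \ref{caseatp}, since $V_{Np^{n+1},Mp^m}$ and $V_{Np^n,Mp^m}$ agree away from $p$ and differ only through an extra $p^{n+1}$-torsion condition at $p$. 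Proper base change then gives $\operatorname{Tr}_{\pi_p} \circ \phi^u_{*} = \phi^u_{*} \circ \operatorname{Tr}_{\pi_p}$, and since the induced map on $H$-levels is the natural degeneracy of the form $U_{Np^{n+1}} \to U_{Np^n}$ (after conjugation by the fixed element determined by $u$), Proposition \ref{tracecompfalt} with $\ell = p$ and $p\mid Np^n$ yields $\operatorname{Tr}_{\pi_p}(\Eis_{m,Np^{n+1}}^{2}) = \Eis_{m,Np^n}^{2}$, from which the claim follows.

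For part (2), the strategy is to adapt the argument used in the proof of \cite[Theorem 5.4.1]{KLZ} for the vertical norm relation of the Beilinson--Flach system. First I fix a decomposition into single cosets
$$V_{Np^n,Mp^m}\, \eta_p\, V_{Np^n,Mp^m} = \bigsqcup_{i \in I} \delta_i\, V_{Np^n,Mp^m},$$
so that the Hecke operator $\mathcal{U}_p$ acts on cohomology as $\sum_{i} [\delta_i]^{*}$. Factoring $f_p$ as the composition of the natural degeneracy from $V_{Np^n,Mp^{m+1}}$ to $V_{Np^n,Mp^{m}}'$ followed by right multiplication by $\eta_p$, I would then describe the composition $\operatorname{Tr}_{f_p} \circ \phi^{u}_{*}$ applied to $\Eis_{m,Np^n}^{2}$ via an orbit/base-change analysis: the preimage of the image of $\phi^{u}$ under $f_p$ decomposes into twisted embeddings $\phi^{u\delta_i^{-1}}$ of $Y_H$ into $Y_G(V_{Np^n,Mp^{m+1}})$, indexed by a subset of the coset representatives.

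The crucial combinatorial input, which is also the main obstacle, is the following stabilisation property, and it is here that the hypothesis $n \geq 3m+3$ enters: for a suitable choice of $u \in G(\A_f)$ with trivial components outside $p$ (whose $p$-component is tailored to whether $p$ is inert or split in $F$), the intersection $H \cap u\,\eta_p^{j}\,V_{Np^n,Mp^m}\,\eta_p^{-j}\,u^{-1}$ is independent of $j$ for $0 \leq j \leq m+1$ and equal to $U_{Np^n}$. Verifying this amounts to an explicit matrix computation with the block description of $V_{Np^n,Mp^m}$ at $p$ recorded just before Definition \ref{twistedembclass}, the inequality $n \geq 3m+3$ being precisely what is needed to ensure the relevant matrix entries are absorbed by the $p^n$-condition after $(m+1)$ iterated conjugations by $\eta_p$. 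Given this stabilisation, each twisted embedding $\phi^{u\delta_i^{-1}}$ factors through $Y_H(U_{Np^n})$ and the trace relations on $Y_H$ degenerate (traces between equal levels being the identity); the sum of pushforwards therefore reassembles, via an application of the projection formula, into $\mathcal{U}_p \cdot \phi^{u}_{*}(\Eis_{m,Np^n}^{2}) = \mathcal{U}_p \cdot {}_c\mathcal{Z}_{Np^n,Mp^m,u}$, as required. The split case requires separate care because $\O_F \otimes \Z_p \cong \Z_p \times \Z_p$ and $H \cap V_1(p^r)$ breaks up non-trivially under the action of $\eta_p$, but the same bound $n \geq 3m+3$ governs both cases.
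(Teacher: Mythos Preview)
Your treatment of part (1) is fine and is exactly what the paper does: it simply cites Proposition \ref{klingenunitary} (which is Corollary \ref{rel1g} specialised to $\iota=\varphi$), and that corollary rests on the commutative square you wrote down together with Proposition \ref{tracecompfalt}.

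For part (2), however, your route diverges from the paper's and carries a genuine gap. The paper does \emph{not} decompose $\mathcal{U}_p$ into single cosets or analyse twisted embeddings $\phi^{u\delta_i^{-1}}$. Instead it isolates a single Cartesian square (Proposition \ref{propcartesian}): with $V':=V_{Np^n,p^m}'$, one shows that for a specific $u$ (given explicitly in the appendix, one formula for $p$ split and one for $p$ inert) the square
\[
\xymatrix{
Y_H(uV_{Np^n,p^{m+1}}u^{-1}\cap H) \ar[rr]^-{pr\circ\phi^u} \ar[d]_{\pi} && Y_G(V') \ar[d]^{\bar{pr}} \\
Y_H(uV_{Np^n,p^{m}}u^{-1}\cap H) \ar[rr]^-{\phi^u} && Y_G(V_{Np^n,p^m})
}
\]
is Cartesian. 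This reduces to two purely group-theoretic checks: (i) $uV_{Np^n,p^{m+1}}u^{-1}\cap H = uV'u^{-1}\cap H$ (so the top horizontal map is a closed immersion), and (ii) $[V_{Np^n,p^m}:V'] = [uV_{Np^n,p^m}u^{-1}\cap H : uV_{Np^n,p^{m+1}}u^{-1}\cap H]$ (so the vertical degrees match). Proper base change then gives $\operatorname{Tr}_{pr}({}_c\mathcal{Z}_{Np^n,p^{m+1},u})=\bar{pr}^*({}_c\mathcal{Z}_{Np^n,p^m,u})$, and applying $\operatorname{Tr}_{\eta_p}$ produces $\mathcal{U}_p\cdot{}_c\mathcal{Z}_{Np^n,p^m,u}$ directly from the correspondence definition $\mathcal{U}_p=\operatorname{Tr}_{\eta_p}\circ\bar{pr}^*$.

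Your ``stabilisation property'' --- that $H\cap u\,\eta_p^{j}V_{Np^n,Mp^m}\eta_p^{-j}u^{-1}$ is independent of $j$ for $0\le j\le m+1$ and \emph{equal to} $U_{Np^n}$ --- is not what is needed and is not correct as stated. The paper only requires the containment $H\cap uV_{Np^n,Mp^m}u^{-1}\subset U_{Np^n}$, and for the explicit $u$ in the appendix the intersection is strictly smaller than $U_{Np^n}$ (it picks up the extra congruences from $V_{Np^n,p^m}$ at $p$). More importantly, the relevant equality is between the intersections with $H$ of $uV_{Np^n,p^{m+1}}u^{-1}$ and $uV_{Np^n,p^m}'u^{-1}$, not a stabilisation over a range of conjugates by $\eta_p^j$. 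Your coset-sum reassembly step (``the sum of pushforwards reassembles via the projection formula'') is therefore not supported by the input you have identified, and without the Cartesian square it is unclear how the terms would recombine into $\mathcal{U}_p$ acting on a single class.
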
 
\begin{remark}\label{commentisuteoprinc} \leavevmode
\begin{enumerate}
\item The Hecke operator $\mathcal{U}_p$ is defined as the correspondence \[ \xymatrix{ Y_G(V_{Np^n,Mp^{m}}') \ar[d]_{\bar{pr}} \ar[dr]^{\eta_p} & \\ Y_G(V_{Np^n,Mp^{m}}) \ar@{.>}[r]^{\mathcal{U}_{p}} & Y_G(V_{Np^n,Mp^{m}}),}  \] where the vertical arrow $\bar{pr}$ is the natural degeneracy map. 
\item After applying the ordinary idempotent $e_{\eta_p}:= \lim_{k\to \infty} \mathcal{U}_p^{k!}$, we get a "neat" compatibility in the second variable (Definition \ref{IWelt}).  
\item The proof of Theorem \ref{finalcongruences}$(1)$ is Proposition \ref{klingenunitary}.
\item The $u$ of Theorem \ref{finalcongruences}$(2)$ does not depend on either $n$ or $m$ and it is not unique (see section 5.4 and the appendix for further explanations).
\end{enumerate}
\end{remark}

Since the nature of these statements is local at $p$,  there is no harm in assuming $M=1$. \\  Theorem \ref{finalcongruences}$(2)$ follows from the following.
\begin{proposition}\label{propcartesian}
There exists an element $u \in G(\Af)$ such that the commutative diagram 
\[ \xymatrix{ & & & & Y_G(V_{Np^n,p^{m+1}}) \ar[d]^{pr}  \\ 
Y_H(uV_{Np^n,p^{m+1}}u^{-1} \cap H)  \ar[urrrr]^-{\phi_{V_{\tiny{Np^n,p^{m+1}}}}^u}  \ar[rrrr]^-{pr \circ \phi_{V_{\tiny{Np^n,p^{m+1}}}}^u}  \ar[d]_{\pi} & & & &  Y_G(V_{Np^n,p^m}') \ar[d]^{\bar{pr}} \\
Y_H(uV_{Np^n,p^m}u^{-1} \cap H)  \ar[rrrr]^-{ \phi_{V_{\tiny{Np^n,p^m}}}^u} & & &  &Y_G(V_{Np^n,p^m})} \] 
has Cartesian bottom square.
\end{proposition}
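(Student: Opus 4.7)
The plan is to translate the Cartesian property into a purely group-theoretic condition at $p$, and then to exhibit $u\in G(\Af)$, supported only at $p$, satisfying it. Since the three open compact subgroups $V_{Np^n,p^m}$, $V_{Np^n,p^m}'$, $V_{Np^n,p^{m+1}}$ all coincide outside $p$, and we take $u_v=1$ for $v\neq p$, the whole proposition reduces to a computation in $G(\Q_p)$ and $H(\Q_p)$.

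The first step is to unwind the adelic double-coset descriptions. Since $\phi^u_V$ sends $[z,h]\mapsto[z,hu]$ and the vertical maps are the obvious projections, a direct computation with representatives in the fibre product shows that the bottom square is Cartesian if and only if: (i) $u V_{Np^n,p^{m+1}}u^{-1}\cap H = u V_{Np^n,p^m}'u^{-1}\cap H$, and (ii) $V_{Np^n,p^m} = (V_{Np^n,p^m}\cap u^{-1}Hu)\cdot V_{Np^n,p^m}'$. Here (i) encodes well-definedness and injectivity of the natural map from $Y_H(uV_{Np^n,p^{m+1}}u^{-1}\cap H)$ into the fibre product $Y_H(uV_{Np^n,p^m}u^{-1}\cap H)\times_{Y_G(V_{Np^n,p^m})}Y_G(V_{Np^n,p^m}')$, while (ii) corresponds to its surjectivity; commutativity of the outer diagram is then formal. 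Both conditions are purely local at $p$.

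Next I would construct a candidate $u=u_p\in G(\Q_p)$. Using the explicit matrix description given before the statement, the quotient $V_{Np^n,p^m}'/V_{Np^n,p^{m+1}}$ is controlled by the $(2,2)$ and $(4,4)$ diagonal entries refining from $1\pmod{p^m}$ to $1\pmod{p^{m+1}}$, subject to the unitary constraints $\overline{g_{11}}g_{44}=\nu(g)$ and $\overline{g_{22}}g_{33}=\nu(g)$. Adapting the strategy of \cite[Theorem 5.4.1]{KLZ} and its generalisation in \cite{LZvert}, I would pick $u$ so that $u^{-1}Hu\cap V_{Np^n,p^m}$ hits precisely the missing coset representatives modulo $V_{Np^n,p^m}'$; concretely, $u$ should come from an element of the torus of $G$ attached to the cocharacter $\eta$, possibly modified by a unipotent twist. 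Because the structure of $H\subset G$ at $p$ depends on whether $p$ splits or is inert in $F$, the two cases must be treated in parallel by analogous matrix-level computations.

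The main obstacle is the verification of condition (ii): once $u$ is fixed, one needs to exhibit explicit elements of $V_{Np^n,p^m}\cap u^{-1}Hu$ whose images form a complete set of coset representatives for $V_{Np^n,p^m}/V_{Np^n,p^m}'$. This is the heart of the argument and is where the particular form of $u$ matters. Once (ii) is established, (i) follows either automatically from the same choice of $u$, or by a short inspection of how $u$-conjugation acts on the level subgroups, completing the proof.
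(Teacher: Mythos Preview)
Your reduction to the two local group-theoretic conditions (i) and (ii) is correct and matches the paper's approach exactly; the paper phrases (ii) as the index equality
\[ [V_{Np^n,p^m}:V_{Np^n,p^m}'] = [uV_{Np^n,p^m}u^{-1}\cap H : uV_{Np^n,p^{m+1}}u^{-1}\cap H], \]
which is equivalent to your coset-decomposition formulation once (i) holds. You are also right that the split and inert cases need separate treatment.

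The genuine gap is the choice of $u$ and the verification, which is the entire content of the proposition. Your heuristic --- that $u$ should come from the torus via $\eta$, possibly with a unipotent twist --- points in the wrong direction: conjugation by a torus element normalises the Borel and does nothing to entangle the diagonal congruences defining $V_{Np^n,p^{m+1}}/V_{Np^n,p^m}'$ with the symplectic condition cutting out $H$. The paper's $u$ is \emph{purely unipotent}. In the split case (identifying $G(\Q_p)\simeq \GL_4(\Q_p)\times\Gm(\Q_p)$) one takes
\[ u=\left(\left(\begin{smallmatrix} 1 & & -1 & \\ & 1 & & \\ & & 1 & \\ & & & 1 \end{smallmatrix}\right),\,1\right), \]
and in the inert case (with $e$ a generator of $\O_F/(p\O_F+\Z)$)
\[ u=\left(\begin{smallmatrix} 1 & & e & \\ & 1 & & \bar{e} \\ & & 1 & \\ & & & 1 \end{smallmatrix}\right). \]
Conceptually, such $u$ represents the open $\mathrm{Klin}^o_H$-orbit on the flag variety $G/\bar{B}_G$, whose stabiliser is the one-dimensional torus $\{\mathrm{diag}(x,1,x,1)\}$; this is precisely why the extra congruences $g_{22}\equiv g_{44}\equiv 1\pmod{p^{m+1}}$ are forced by membership in $H$ after $u$-conjugation, yielding (i). Condition (ii) is then a direct but non-trivial count: both indices equal $p^{10}$, and the paper exhibits explicit coset representatives on each side. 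Without specifying $u$ and carrying out these two matrix computations, your proposal is a correct outline of the strategy but not yet a proof.
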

See Appendix A for a proof of Proposition \ref{propcartesian}.
\begin{proof}[Proof of Theorem \ref{finalcongruences}$(2)$]
Let $u$ be the matrix which appears in Proposition \ref{propcartesian}. From the compatibility of pull-backs and push-forwards in Cartesian diagrams, we get \[\operatorname{Tr}_{pr}({}_c\mathcal{Z}_{p^n,p^{m+1},u})=\bar{pr}^{*}({}_c\mathcal{Z}_{p^n,p^{m},u}). \] Applying the trace of $\eta_p:Y_G(V_{Np^n,p^m}')\longrightarrow Y_G(V_{Np^n,p^m})$, we have \begin{align*} \operatorname{Tr}_{\eta_p}( \operatorname{Tr}_{pr}({}_c\mathcal{Z}_{p^n,p^{m+1},u}))&=\operatorname{Tr}_{\eta_p}(\bar{pr}^{*}({}_c\mathcal{Z}_{p^n,p^{m},u})) \\ &= \mathcal{U}_p\cdot {}_c\mathcal{Z}_{p^n,p^{m},u}, \end{align*} where the last equality follows from the very definition of $\mathcal{U}_p$ in Remark \ref{commentisuteoprinc} (1) as the correspondence $\operatorname{Tr}_{\eta_p} \circ \bar{pr}^{*}$; this is the desired formula since $\operatorname{Tr}_{\eta_p} \circ \operatorname{Tr}_{pr}=\operatorname{Tr}_{f_p}.$ 
\end{proof}

\subsection{Projection to the ordinary part} Using Theorem \ref{finalcongruences}, we define a limiting element where both $n$ and $m$ go to $\infty$.
If we substitute the tower of level subgroups $K_{Np^n,p^m}:=\eta_p^{-m}V_{Np^n,p^m}\eta_p^m$ for $V_{Np^n,p^m}$ and the class \[ _c\mathfrak{Z}_{Np^n,p^m,u}:=(\eta_p^{m})_*({}_c\mathcal{Z}_{Np^n,p^m,u})\in H_{\et}^{5}(Y_G(K_{Np^n,p^m}) ,\Z_{p}(3)) \] for ${}_c\mathcal{Z}_{Np^n,p^m,u}$, Theorem \ref{finalcongruences} gives a trace compatibility relation with respect to the natural degeneracy maps for both $n$ and $m$ varying. This follows from having the commutative diagram \[\xymatrix{ Y_G(V_{Np^n,p^{m+1}}) \ar[rr]^{\eta_p^{m+1}} \ar[d]_{f_p} &&Y_G(K_{Np^n,p^{m+1}}) \ar[d] \\ Y_G(V_{Np^n,p^m}) \ar[rr]^{\eta_p^{m}} & & Y_G(K_{Np^n,p^m}), } \] where right vertical map is the natural degeneracy map. Set \[ H_{\et}^{5}(Y_G(K_{Np^{\infty},p^{\infty}}) ,\Z_{p}(3)):= \varprojlim_{n,m} H_{\et}^{5}(Y_G(K_{Np^n,p^m}) ,\Z_{p}(3)).\]
The ordinary idempotent $e_{\eta_p}:= \lim_{k\to \infty} \mathcal{U}_p^{k!}$ acts on it.  \begin{definition}\label{IWelt} We define \[  _c\mathfrak{Z}_{N,u}^{\text{ord}}:=(\mathcal{U}_{p}^{-m}\cdot  e_{\eta_p}({}_c\mathfrak{Z}_{Np^n,p^m,u}))_{n,m\geq 1} \in  e_{\eta_p}\cdot H_{\et}^{5}(Y_G(K_{Np^{\infty},p^{\infty}}) ,\Z_{p}(3)). \]
 \end{definition}
The class $ _c\mathfrak{Z}_{N,u}^{\text{ord}}$ is well-defined, thanks to the trace compatibility relations of Theorem \ref{finalcongruences}.
 \subsection{Final remarks}
 In \cite{LLZ1}, \cite{LLZ2}, \cite{LSZ1}, and \cite{CR}, the trace compatibility relations obtained by varying level subgroups $ V_{Np^n,p^m}$ with respect to $m$ has a primary role in proving the vertical Euler system norm relations in the $p$-cyclotomic tower. Theorem \ref{finalcongruences} does not give any result in this direction. In this section, we describe the nature of the obstruction that we encounter when trying to prove cyclotomic norm relations using this method. This informal discussion is much inspired by the work \cite{LZvert}, which treats an axiomatisation of the technique used in \emph{op.cit}. 
\\ 

  Ideally we would have projections \[Y_G(V_{Np^n,p^m})\longrightarrow Y_G(V_{Np^n})\times_{\Q} \operatorname{Spec}\Q(\zeta_{p^m}) \] and we would read the compatibility under $\operatorname{Tr}_{f_p}$ of Theorem \ref{finalcongruences} as one under the trace map associated to the natural degeneracy map $ \operatorname{Spec}\Q(\zeta_{p^{m+1}})\to \operatorname{Spec}\Q(\zeta_{p^m})$. Unfortunately, \[ \nu(V_{Np^n,p^m})\subset \widehat{\Z}^*\] has  $p$-part equal to $\Z_p^*$, thus $Y_G(V_{Np^n,p^m})$  does not surjects onto $Y_G(V_{Np^n})\times_{\Q} \operatorname{Spec}\Q(\zeta_{p^m})$ since it does not have enough connected components at $p$.\\  
Motivated by the Euler system constructions mentioned above, we could try to modify the tower of subgroups $\{V_{Np^n, p^m}\}$ by defining \[ \tilde{V}_{Np^n, p^m}:=V_{Np^n} \cap \eta_p^{m}V_{Np^n} \eta_p^{-m} \cap \{g \in G(\Z_p):\quad \nu(g) \equiv 1 \text{ (mod }p^m )\}, \] instead of intersecting $V_{Np^n} \cap \eta_p^{m}V_{Np^n} \eta_p^{-m}$ with \[\{g \in G(\Z_p):\quad g \equiv \left(\begin{smallmatrix} * & & & \\  & 1 & & \\ & & * & \\ & & & 1 \end{smallmatrix} \right) \text{  mod }p^m \}.\] 
Then, the corresponding Shimura variety $Y_G(\tilde{V}_{Np^n,p^m})$ has enough connected components and it is reasonable to ask if it is possible to find $u\in G(\Af)$ such that the diagram \[ \xymatrix{ & & & & Y_G(\tilde{V}_{Np^n,p^{m+1}}) \ar[d]  \\ 
Y_H(u\tilde{V}_{Np^n,p^{m+1}}u^{-1} \cap H)  \ar[urrrr]^-{\phi_{\tilde{V}_{\tiny{Np^n,p^{m+1}}}}^u}  \ar[rrrr]^-{pr \circ \phi_{\tilde{V}_{\tiny{Np^n,p^{m+1}}}}^u}  \ar[d] & & & &  Y_G(\tilde{V}_{Np^n,p^m}') \ar[d] \\
Y_H(u\tilde{V}_{Np^n,p^m}u^{-1} \cap H)  \ar[rrrr]^-{ \phi_{\tilde{V}_{\tiny{Np^n,p^m}}}^u} & & &  &Y_G(\tilde{V}_{Np^n,p^m})} \]  has Cartesian bottom square. Crucially, $u\in G(\Af)$ has to be chosen so that the morphism \[ pr \circ \phi_{\tilde{V}_{\tiny{Np^n,p^{m+1}}}}^u \] is a closed immersion. This boils down to showing that, for any $m\geq 1$, \begin{align}\label{wish} u\tilde{V}_{Np^n,p^{m+1}}u^{-1} \cap H=u\tilde{V}_{Np^n,p^{m}}'u^{-1} \cap H.\end{align} 
How does one determine $u$ such that equality \eqref{wish} is satisfied? Our choice of the co-character $\eta$ determines a parabolic and its opposite of $G$, which are respectively the upper and lower-triangular Borels $B_G$ and $\bar{B}_G$ (indeed,  conjugation by powers of $\eta_p$ induces congruences modulo powers of $p$ for upper triangular entries of the elements in $V_{Np^n,p^m}$). 
The equality \eqref{wish} follows (by reducing modulo $p^{m+1}$) from the condition \begin{align}\label{wish2}  \text{Klin}^o_{H} \cap u\bar{B}_Gu^{-1}\subset \operatorname{Sp}_4,\end{align} where $\text{Klin}^o_{H}$ is the $p$-part of the stabiliser of the Eisenstein class for $H$ \[ \Eis_{m,Np^{\infty}}^2 \in H^{3}_{\et}(Y_{H}(U_{Np^{\infty}}),\Z_p(2)).\]  In other words, it denotes the subgroup over $\Z_p$ of the Klingen parabolic of $H$ of matrices of the form \[ \left(\begin{smallmatrix} \ast & \ast & \ast & \ast \\ 0 & \ast & \ast & \ast \\ 0 & \ast & \ast & \ast \\ 0 & 0 & 0 & 1 \end{smallmatrix}\right). \]

\begin{remark} Note that $\text{Klin}^o_{H} \cap u\bar{B}_Gu^{-1}$ is the stabiliser of the $\text{Klin}^o_{H}(\Z_p)$-orbit $u\bar{B}_G$ in the flag variety $G/\bar{B}_G$.
\end{remark}  
We cannot find $u$ such that its stabiliser is contained in $\operatorname{Sp}_4$. For instance, let $p$ be split in $F$, so that $G(\Q_p)$ is isomorphic to $\GL_4(\Q_p) \times \Gm(\Q_p)$. The lower-triangular Borel $\bar{B}_G$ has co-dimension 6 in $G$, while $\text{Klin}^o_{H}$ has dimension 7, thus $u$ satisfies \eqref{wish2} if the image under the multiplier $\mu$ of a space of dimension bigger or equal than 1 is trivial. For sufficiently generic $u$, conjugation by $u$ rearranges the entries of matrices in $\bar{B}_G$, but the condition that they need to lie in $\text{Klin}^o_{H}$ does not give enough equations to force these matrices to have multiplier one.  

\begin{remark} This is precisely why we define the tower of subgroups $V_{Np^n,p^m}$ by intersecting it with \[\{g \in G(\Z_p):\quad g \equiv \left(\begin{smallmatrix} * & & & \\  & 1 & & \\ & & * & \\ & & & 1 \end{smallmatrix} \right) \text{  mod }p^m \}.\] Indeed, the calculation underlying Proposition \ref{propcartesian} shows that the stabiliser of the open $\text{Klin}^o_{H}$-orbit $u\bar{B}_G$ is one dimensional and it is isomorphic to the one dimensional subgroup  \[ \left \{ \left(\begin{smallmatrix} x & & & \\  & 1 & & \\ & & x & \\ & & & 1 \end{smallmatrix} \right)\right \} \] of the maximal torus of $H$.
\end{remark}
\begin{remark}
This phenomenon occurs in several push-forward constructions from an Eisenstein class for $\GSp_4$. For instance, an analogous of Theorem \ref{finalcongruences} works in the case where we consider \[ H\times_{\mu,\operatorname{det}}\GL_2 \hookrightarrow \GSp_6 \] and the push-forward of the pull-back of $\Eis_{m,Np^{n}}^2$ along the diagram \[ \xymatrix{ Y_{H}(U_{Np^n}) & Y_H(L_{Np^n} \cap H) \ar[l] \ar[r] & Y_{\GSp_6}(L_{Np^n})},\] for sufficiently nice level subgroup $L_{Np^n} \subset \GSp_6(\Af)$. In this case, there is $u \in \GSp_6$ such that the stabiliser of its $\text{Klin}^o_{H}\times_{\mu,\operatorname{det}}\GL_2$-orbit in the flag variety $\GSp_6/\bar{B}_{\GSp_6}$ is isomorphic to the one dimensional subgroup $\{ \operatorname{diag}(x,1,x,1,x,1) \}$, and for which the analogue of Proposition \ref{propcartesian} holds.  For instance, a representative $u$ of this orbit can be taken as \[\left(\begin{smallmatrix} 1 & 2& 1 & -1 & 1 &1  \\ 0 & 3 & 2 & 0 & 1 & -1  \\ 0 & 2 & 2 & 1 & 0 & -1  \\ 0 & 1 & 1 & 2 & -1 &0  \\ 0 & 0 & 0 & -1 & 1 & -1  \\ 0 & 0 & 0 & 0 & 0 & 1 \end{smallmatrix} \right). \]
\end{remark}

It remains an interesting open problem to understand the connection of the construction of the \'etale classes \[ _c\mathscr{Z}_{Np^n,Mp^m,u} \in H_{\et}^{5}(Y_G(V_{Np^n,Mp^m}) ,\Z_{p}(3)) \]
with values of the wedge square product $L$-function of automorphic representations of $G$ and we intend to tackle the question in a future project.
\appendix
\section{Proof of Proposition \ref{propcartesian}}

In the following, we show that there exists $u \in G(\Af)$ such that the diagram of Proposition \ref{propcartesian} 
has Cartesian bottom square. This follows from showing that there exists $u \in G(\Af)$ such that \begin{enumerate}
\item $pr \circ \phi_{V_{\tiny{Np^n,p^{m+1}}}}^u$ is a closed immersion, i.e. \[uV_{Np^n,p^{m+1}}u^{-1} \cap H=uV_{Np^n,p^{m}}'u^{-1} \cap H.\]
\item the degrees of $\pi$ and $\bar{pr}$ agree, i.e. \[ [V_{Np^n,p^{m}}:V_{Np^n,p^{m}}']=[uV_{Np^n,p^{m}}u^{-1} \cap H:uV_{Np^n,p^{m+1}}u^{-1} \cap H].\]
\end{enumerate}
Here $n$ is always assumed to be bigger than $3m+3$.  
We treat the cases of split and inert $p$ in $F$ separately. 
\subsection{The split case}
Let $p$ be split in $F$ and denote $\Z_p \otimes_{\Z} \O_F$ by $\O_{F,p}$. Note that there is a $\Z_p$-algebra isomorphism \[ i_p: \O_{F,p} \longrightarrow \Z_p \times \Z_p.\] If $i_1,i_2$ denote the two distinct embeddings of $F$ into $\Q_p$, then \[i_p:a \otimes b \mapsto (i_1(b)a,i_2(b)a).\] Under this isomorphism, \[ i_p(a \otimes \bar{ b})=(i_2(b)a,i_1(b)a).\] This isomorphism induces one between $G(\Z_p)$ and $$\left \{ (M,N,a) \in GL_4(\Z_p)\times GL_4(\Z_p)\times \Z_p^* : N^t J M=a J \right \}.$$ Moreover, the map $(M,N) \mapsto (M,a)$  defines an isomorphism between $G(\Z_p)$ and $GL_4(\Z_p) \times \Gm(\Z_p)$ and $H(\Z_p)$ embeds into $GL_4(\Z_p) \times \Gm(\Z_p)$ via $M\mapsto (M,\mu(M)).$
We claim that \[u:=\left( \left( \begin{smallmatrix} 1 & & -1 & \\  & 1 & & \\ & & 1 & \\ & & & 1 \end{smallmatrix} \right),1\right) \in GL_4(\Z_p) \times \Gm(\Z_p)\] satisfies the properties (1),(2) listed above.
\begin{lemma}\label{closedimmsplit} We have
$uV_{Np^n,p^{m+1}}u^{-1} \cap H=uV_{Np^n,p^{m}}'u^{-1} \cap H$.
\end{lemma}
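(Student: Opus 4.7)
The plan is to prove the non-trivial inclusion $uV_{Np^n,p^m}'u^{-1}\cap H \subset uV_{Np^n,p^{m+1}}u^{-1}\cap H$ by a direct matrix calculation in the split model. Using the full decomposition $G(\Z_p)\simeq\{(M,N,a)\in\GL_4\times\GL_4\times\Gm:N^tJM=aJ\}$, an element $h\in H(\Z_p)$ corresponds to $(h,h,\mu(h))$, while $u$ corresponds to $(u_1,N_u,1)$, where $u_1$ is the matrix displayed in the lemma and $N_u=-Ju_1^{-t}J$ is determined by the unitary relation. A short calculation gives that $u_1$ differs from the identity only in the $(1,3)$-entry (with value $-1$), whereas $N_u=I-E_{24}$ differs from the identity only in the $(2,4)$-entry. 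Consequently, $u^{-1}hu\in V_{Np^n,p^m}'$ translates to simultaneous congruence conditions on both $u_1^{-1}hu_1$ and $N_u^{-1}hN_u$ as $\GL_4(\Z_p)$-matrices.

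First I would compute both conjugates explicitly: conjugation by $u_1$ adds row $3$ of $h$ to row $1$ and subtracts column $1$ from column $3$, while conjugation by $N_u$ adds row $4$ to row $2$ and subtracts column $2$ from column $4$. Then, for each of the three defining conditions of $V_{Np^n,p^m}'$ (membership in $V_{Np^n}$, in $\eta_p^{m+1}V_{Np^n}\eta_p^{-(m+1)}$, and the Klingen-type congruence modulo $p^m$), I would read off what is imposed on the entries $h_{ij}$. The only refinement in going from $V_{Np^n,p^m}'$ to $V_{Np^n,p^{m+1}}$ is the strengthening of the Klingen congruence from $p^m$ to $p^{m+1}$: the $V_{Np^n}$ last-row condition already gives congruences modulo $p^n$, and the $\eta_p^{m+1}$-conjugation condition forces every upper-triangular entry $(i,j)$, $i<j$, of both conjugates to be divisible by $p^{(j-i)(m+1)}$, hence by $p^{m+1}$.

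Combining these divisibilities pairwise from the two conjugates will give most of the refined congruences for free. For example, $h_{12}$ is the $(1,2)$-entry of $N_u^{-1}hN_u$ and hence divisible by $p^{m+1}$, while $h_{12}+h_{32}$ is the $(1,2)$-entry of $u_1^{-1}hu_1$ and is likewise divisible by $p^{m+1}$; together these force $h_{32}\equiv 0\pmod{p^{m+1}}$. Analogous pairings pin down $h_{21},h_{23},h_{24},h_{34}$ modulo $p^{m+1}$, and the congruence $h_{22}\equiv 1\pmod{p^{m+1}}$ is extracted from the $(2,4)$-entry $h_{24}+h_{44}-h_{22}-h_{42}$ of $N_u^{-1}hN_u$ combined with $h_{42}\equiv 0$ and $h_{44}\equiv 1 \pmod{p^n}$.

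The single remaining entry is $h_{31}$, which the hypothesis only controls modulo $p^m$. To close this gap I would invoke the symplectic relation $h^tJh=\mu(h)J$: its $(1,2)$-entry reads
\[
-h_{41}h_{12}-h_{31}h_{22}+h_{21}h_{32}+h_{11}h_{42}=0,
\]
and each of $h_{41}h_{12}$, $h_{11}h_{42}$, $h_{21}h_{32}$, and $h_{31}(h_{22}-1)$ lies in $p^{m+1}\Z_p$ by what has already been established, forcing $h_{31}\equiv 0\pmod{p^{m+1}}$. The main subtlety, and the reason for the specific shape of $u$, is precisely that $u$ must be chosen so that after the two conjugations only one coordinate remains underdetermined, and so that exactly one symplectic identity supplies the missing congruence with all other summands vanishing to the required order; a generic twist would leave several entries unfixed and this strategy would fail.
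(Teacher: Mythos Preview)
Your argument is correct. Both your proof and the paper's rest on the same principle: reduce modulo $p^{m+1}$ and use the symplectic relation on $h=ugu^{-1}\in H$ to force the extra congruences. The organisation, however, is genuinely different. The paper stays in the compressed model $G(\Z_p)\simeq\GL_4(\Z_p)\times\Gm(\Z_p)$ and computes $u g_p u^{-1}$ directly, then imposes the block symplectic equations $A^t I_2' D - C^t I_2' B=\alpha I_2'$ and $B^t I_2' D=D^t I_2' B$ to read off $c\equiv 0$, $a_4\equiv 1$, $a_1\equiv d_1\pmod{p^{m+1}}$ in one stroke (the $(2,1)$ and $(3,1)$ entries of $g_p$, which the paper suppresses in its displayed matrix, are in fact killed by the same symplectic equations). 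You instead keep the full triple $(M,N,a)$ and exploit that the level conditions constrain \emph{both} $u_1^{-1}hu_1$ and $N_u^{-1}hN_u$; pairing the upper-triangular congruences from the two conjugates gives all entries except $h_{31}$, which you then extract from a single scalar identity in $h^tJh=\mu(h)J$. Your route makes very transparent exactly which congruence comes from which coordinate, at the cost of more bookkeeping; the paper's route is shorter but hides the role of the second $\GL_4$-factor inside the block symplectic equations. Since $N$ is determined by $(M,\alpha)$ via the unitary relation, the two packagings are equivalent --- your use of the $N$-conjugate is really a reorganisation of the same symplectic constraints the paper invokes.
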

\begin{proof}
We want to show that if $g=(g_p)_p \in V_{Np^n,p^{m}}'$ is such that $ugu^{-1} \in H$, then $g \in V_{Np^n,p^{m+1}}$. Since this is a local statement at $p$, it is enough to verify that if $g_p=\left( \left( \begin{smallmatrix} A & B \\  C & D \end{smallmatrix} \right),\alpha \right)$ satisfies $ug_pu^{-1} \in H(\Z_p)$, then \[ g_p \equiv \left( \left( \begin{smallmatrix} x & & & \\  & 1 & & \\ & & x & \\ & & & 1 \end{smallmatrix} \right), x \right) \text{ (mod }p^{m+1}).\] 
Reducing modulo $p^{m+1}$, we get $ug_pu^{-1}\equiv \left( \left( \begin{smallmatrix} a_1 & -c & a_1-d_1 & 0 \\  0 & a_4 & 0 & 0 \\ 0 & c & d_1 & 0 \\ 0 & 0 & 0 & 1 \\ \end{smallmatrix} \right),\alpha \right)$, which is symplectic if \begin{align*}  \left( \begin{smallmatrix} 0 & a_1 \\ a_4d_1 & -c \end{smallmatrix} \right) &= \left( \begin{smallmatrix} 0 & \alpha \\  \alpha & 0 \end{smallmatrix} \right), \\ \left( \begin{smallmatrix} 0 & a_1-d_1 \\ 0 & 0 \end{smallmatrix} \right) &= \left( \begin{smallmatrix} 0 & 0 \\  a_1-d_1 & 0 \end{smallmatrix} \right).
\end{align*}
Thus, $a_1-d_1\equiv a_4-1 \equiv c \equiv 0$ (mod $p^{m+1}$), which implies that \[ g_p \equiv \left( \left( \begin{smallmatrix} a_1 & & & \\  & 1 & & \\ & & a_1 & \\ & & & 1 \end{smallmatrix} \right), a_1 \right) \text{ (mod }p^{m+1}).\]
\end{proof}
 We are left to check that 
 \begin{lemma}\label{splitdeg} We have
 \[ [V_{Np^n,p^{m}}:V_{Np^n,p^{m}}']=[uV_{Np^n,p^{m}}u^{-1} \cap H:uV_{Np^n,p^{m}}'u^{-1} \cap H].\]
 \end{lemma}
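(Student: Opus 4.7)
Since $V_{Np^n,p^m}$ and $V_{Np^n,p^m}'$ differ only in their components at $p$, the statement is a purely local computation at $p$. I would work throughout in the identification $G(\Z_p)\simeq \GL_4(\Z_p)\times \Gm(\Z_p)$ from the preceding subsection, writing $g\leftrightarrow (M,\nu)$ with the second factor of $g$ determined by the unitary condition $N=\nu JM^{-t}J^{-1}$. In these coordinates $\eta_p$-conjugation acts only on $M$, as conjugation by $T=\operatorname{diag}(p^3,p^2,p,1)$, and preserves $\nu$.

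For the left-hand side, a short computation shows that the integrality pattern on $N$ is automatic off the diagonal once $M$ satisfies the displayed pattern (since $J$ acts by a Weyl-type flip preserving the exponents $j-i$), while on the diagonal it forces the single relation $\nu\equiv M_{11}\equiv M_{33}\pmod{p^m}$ via the formulas $N_{22}=\nu(M^{-1})_{33}$ and $N_{44}=\nu(M^{-1})_{11}$. Passing from $V_{Np^n,p^m}$ to $V_{Np^n,p^m}'$ only tightens the six strictly upper-triangular entries of $M$, replacing divisibility by $p^{m(j-i)}$ with $p^{(m+1)(j-i)}$, whereas the torus/multiplier constraints remain at level $p^m$; hence
\[ [V_{Np^n,p^m}:V_{Np^n,p^m}']=\prod_{1\le i<j\le 4}p^{\,j-i}=p^{1+2+3+1+2+1}=p^{10}. \]

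For the right-hand side, I would linearize near the identity. A matrix $h=I+X\in H(\Z_p)$ has $X$ subject to $X^tJ+JX=\alpha J$, giving eleven free parameters $X_{11},X_{12},X_{13},X_{14},X_{21},X_{22},X_{23},X_{31},X_{32},X_{41},\alpha$. With $M_u=I-E_{13}$ and $M_u^{-1}=I+E_{13}$, the six strictly upper-triangular entries of $M_u^{-1}XM_u$ unwind to the linear forms
\[ v_{12}=X_{12}+X_{32},\quad v_{13}=X_{13}+\alpha-X_{11}-X_{22}-X_{31},\quad v_{14}=X_{14}-X_{12}, \]
\[ v_{23}=X_{23}-X_{21},\quad v_{24}=X_{13},\quad v_{34}=-X_{12}, \]
and $u^{-1}hu\in V_{Np^n,p^m}$ (resp.\ $V_{Np^n,p^m}'$) requires $v_{ij}$ divisible by $p^{m(j-i)}$ (resp.\ $p^{(m+1)(j-i)}$). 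The diagonal torus/multiplier constraints transport under conjugation by $u$ to constraints still at level $p^m$ on both sides, so they do not affect the index.

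The crux of the proof is then to verify that the induced $\Z_p$-linear map $(X,\alpha)\mapsto (v_{12},v_{13},v_{14},v_{23},v_{24},v_{34})$ surjects onto $\Z_p/p\oplus\Z_p/p^2\oplus\Z_p/p^3\oplus\Z_p/p\oplus\Z_p/p^2\oplus\Z_p/p$. This is immediate by solving in the triangular order $v_{34}\!\to\!X_{12}$, $v_{12}\!\to\!X_{32}$, $v_{14}\!\to\!X_{14}$, $v_{24}\!\to\!X_{13}$, $v_{23}\!\to\!X_{23}$, $v_{13}\!\to\!\alpha$; hence the kernel has index $p^{1+2+3+1+2+1}=p^{10}$, which agrees with the LHS. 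The only possible obstacle---that the symplectic constraints on $X$ might force linear dependence among the six forms and shrink the image---is ruled out by this explicit triangular solving.
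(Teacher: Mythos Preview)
Your linear–algebra computation is correct and morally explains the answer, but the passage from ``the linear map $(X,\alpha)\mapsto(v_{ij})$ is surjective with kernel the $V'$–conditions'' to ``the group index is $p^{10}$'' is not justified. The parametrisation $h=I+X$ with $X^tJ+JX=\alpha J$ is only the first-order approximation to the symplectic condition $h^tJh=\mu(h)J$; the genuine group $uV_{Np^n,p^m}u^{-1}\cap H$ contains the full torus $\{\operatorname{diag}(x,1,x,1)\}$ and is not additive in $X$. In particular the map $h\mapsto(v_{ij}(h))$ is \emph{not} a group homomorphism (for instance $(g_1g_2)_{14}$ involves the cross terms $(g_1)_{12}(g_2)_{24}$ and $(g_1)_{13}(g_2)_{34}$, each only divisible by $p^{3m}$, so congruence modulo $p^{3m+3}$ is not preserved). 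Hence knowing that the fibre over $0$ is $K'$ and that the map is surjective does not give $[K:K']=p^{10}$; you would still need to show that the fibres are $K'$--cosets, which is essentially the full statement.

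The paper's argument runs parallel to yours but stays multiplicative. It writes down explicit candidate representatives $\sigma'_{\underline{w}}$ inside $V_{Np^n,p^m}$ (upper-triangular unipotents with a few extra entries) and shows that for any $g$ with $ugu^{-1}\in H$ one can solve the resulting system of nine congruences so that $u\sigma'_{\underline{v}}gu^{-1}$ lands in the smaller group. Six of these congruences determine $r_2,r_1,r_4,k_1,r_3,k_2$ --- this is exactly your triangular solving --- while the remaining three (governing $s_1,s_2,t$) turn out to be \emph{redundant}, because unfolding the symplectic relations on $ugu^{-1}$ modulo $p^{m+1}$ forces $a_4\equiv 1$, $a_1\equiv d_1$, $c_2\equiv a_2+d_2$. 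Your observation that the six forms $(v_{ij})$ are triangularly solvable using only six of the eleven free parameters is precisely the linearised shadow of this redundancy; what is missing from your write-up is the verification that the nonlinear terms do not spoil it. Either carry out the multiplicative check as the paper does, or add a filtration argument explaining why the higher-order corrections lie in $K'$.
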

 \begin{proof}  
Note that $[V_{Np^n,p^{m}}:V_{Np^n,p^{m}}']=p^{10}$, since a left coset of representatives is given by
\[ \sigma_{\underline{v}} =\left( \left( \begin{smallmatrix} 1 & p^m k_1 & p^{2m} r_1 & p^{3m} r_2  \\ & 1 & p^m r_3 & p^{2m}r_4  \\  &  & 1 & p^m k_2  \\    &  & & 1 \end{smallmatrix} \right), 1 \right), \]
where for each vector $\underline{v}\in \Z/ p^3\Z \times (\Z/ p^2\Z)^{2} \times (\Z/ p\Z)^{3}$ we consider one and only one lift 
\[(r_2,r_1,r_4,k_1,r_3,k_2)\in \Z_p^{10}.\]

Now, recall that the $p$-component of $uV_{Np^n,p^{m}}u^{-1} \cap H$ is isomorphic to the subgroup of $G(\Z_p)$ given by elements $(g,\alpha)$ such that $u(g,\alpha) u^{-1} \in H$ and
\[ g \equiv  \left( \begin{smallmatrix} \alpha & & & \\  & 1 & & \\ & & \alpha & \\ & & & 1 \end{smallmatrix} \right)  \text{ mod } \left[ \begin{smallmatrix} p^m & p^{m} & p^{2m} &  p^{3m} \\ p^n & p^{m} & p^{m} & p^{2m} \\ p^n & p^{m} & p^m & p^{m} \\ p^n & p^n & p^n & p^{n}  \end{smallmatrix}  \right]. \] 
Moreover, from Lemma \ref{closedimmsplit} we have $uV_{Np^n,p^{m}}'u^{-1} \cap H=uV_{Np^n,p^{m+1}}u^{-1} \cap H$, hence \[ [uV_{Np^n,p^{m}}u^{-1} \cap H:uV_{Np^n,p^{m}}'u^{-1} \cap H] = [uV_{Np^n,p^{m}}u^{-1} \cap H:uV_{Np^n,p^{m+1}}u^{-1} \cap H].\]
We claim that a system of coset representatives is given by a subset of the set of elements \[ \sigma_{\underline{v}}' = \left( \left( \begin{smallmatrix} 1+p^m s_1 & p^m k_1 & p^{2m} r_1 & p^{3m} r_2  \\ & 1+p^m s_2 & p^m r_3 & p^{2m}r_4  \\  & p^m t & 1 & p^m k_2  \\    &  & & 1 \end{smallmatrix} \right), 1+p^m s_1 \right) \in V_{Np^n,p^{m}} \]
where for each vector $\underline{v} \in \Z/ p^3\Z \times (\Z/ p^2\Z)^{2} \times (\Z/ p\Z)^{6}=:S$ we consider one and only one lift 
\[(r_2,r_1,r_4,k_1,r_3,k_2,s_1 , s_2 , t )\in \Z_p^{13}.\]
More precisely, a system of left coset representatives is $\{ u({\sigma_{\underline{w}}'})^{-1}u^{-1} \}_{\underline{w} \in W}$, where $W \subset S$, which is determined by the symplectic conditions for $u{\sigma_{\underline{w}}'}u^{-1}$ \emph{modulo (powers of) $p$}, 
is defined to be the subset of cardinality $p^{10}$ of elements of the form \[ (r_2,r_1,r_4,k_1,r_3,k_2,0 , 0 , k_1 + k_2 ) \in S.\]  

To prove our claim we need to show that for any $g=\left( \begin{smallmatrix} a_1 & a_2 & b_1 & b_2 \\ 0 & a_4 & b_3 & b_4 \\0 & c_2 & d_1 & d_2 \\ 0 & 0 & 0 & 1 \end{smallmatrix} \right) \in V_{Np^n,p^{m}}$ such that $ugu ^{-1} \in H$, there exists $\sigma_{\underline{v}}'$ with $\underline{v} \in W$ such that \[ u\sigma_{\underline{v}}'  g u^{-1} \in  uV_{Np^n,p^{m+1}}u^{-1} \cap H. \] 
This boils down to solving the system of equations 
\[ \begin{array}{l l}
{\left\{ 
	\begin{array} {l l} 
	b_2 + p^m k_1 b_4 + p^{2m} r_1 d_2 + p^{3m} r_2 \equiv 0 \;\; [p^{3m+3}] \\
	b_1 + p^m k_1 b_3 + p^{2m} r_1 d_1  \equiv 0 \;\; [p^{2m+2}] \\
	b_4 + p^m r_3 d_2 + p^{2m} r_4 \equiv 0 \;\; [p^{2m+2}] \\
	a_2 + p^m k_1 a_4   \equiv 0 \;\; [p^{m+1}] \\
	b_3 + p^m r_3 \equiv 0 \;\; [p^{m+1}] \\
	p^m (k_1 + k_2 ) b_4  + d_2 + p^{m} k_2 \equiv 0 \;\; [p^{m+1}] 
	\end{array}
	\right. } \\
{\left\{ 
	\begin{array} {l l} 
	a_4 + p^m r_3 c_2 \equiv 1 \;\; [p^{m+1}] \\
	p^m (k_1 + k_2 ) a_4  + c_2 \equiv 0 \;\; [p^{m+1}] \\
	p^m (k_1 + k_2 ) b_3 + d_1 \equiv a_1 \;\; [p^{m+1}] \\
	\end{array}
	\right.} \\
\end{array} \\
\]
From the first system of equations, we determine the values of $r_2,r_1,r_4,k_1,r_3,$ and $k_2$ since $d_1$ and $a_4$ are invertible modulo $p$. In particular, the fourth and sixth equations give \[ p^m(k_1+k_2) \equiv - a_4^{-1} a_2 - d_2 \;\; [p^{m+1}].\] 
Thus, the second system of equations reduces to \[ {\left\{ 
	\begin{array} {l l} 
	a_4  \equiv 1 \;\; [p^{m+1}] \\
	 - a_2 - d_2 + c_2 \equiv 0 \;\; [p^{m+1}] \\
	d_1 \equiv a_1 \;\; [p^{m+1}] \\
	\end{array}
	\right.}\]
 and it is redundant. Indeed, unfolding the symplectic conditions of $ugu^{-1}$ modulo $p^{m+1}$, we get 
 
\[ {\left\{ \begin{array} {l l} 
	a_1  \equiv a_4 d_1 \;\; [p^{m+1}] \\
	a_2 - c_2 + a_4 d_2 \equiv 0 \;\; [p^{m+1}] \\
	d_1 b_4 \equiv b_2 + a_1 - d_1 + d_2 b_3 \;\; [p^{m+1}] \\
	\end{array}
	\right.}  \]	
which gives 
\[ {\left\{ \begin{array} {l l} 
	a_4  \equiv 1 \;\; [p^{m+1}] \\
	c_2 \equiv a_2 + d_2   \;\; [p^{m+1}] \\
	d_1 \equiv  a_1 \;\; [p^{m+1}] \\
	\end{array}
	\right.}  \]	
	since $b_4,b_2,b_3 d_2 \equiv 0$ modulo $p^{m+1}$. Thus,  we conclude that 
 \[ [V_{Np^n,p^{m}}:V_{Np^n,p^{m}}']=[uV_{Np^n,p^{m}}u^{-1} \cap H:uV_{Np^n,p^{m}}'u^{-1} \cap H]=p^{10}.\]
\end{proof}

\subsection{The inert case}
Let $F_p$ be the $p$-adic completion of $F$ at $p$; it is an extension of degree 2 over $\Q_p$ and denote by $\bar{\bullet}$ the non-trivial automorphism in the Galois group. Let $e\in \O_{F_p}$ be a generator of $\O_F/(p\O_F+\Z)$ and consider \[ u =\left( \begin{smallmatrix} 1 &  & e & \\  & 1 & & \bar{e} \\ &  & 1 &  \\ & & & 1 \end{smallmatrix} \right)\in  G(\Z_p).\]
We claim that $u$ satisfies (1),(2).
\begin{lemma}\label{firstinert} We have
$uV_{Np^n,p^{m+1}}u^{-1} \cap H=uV_{Np^n,p^{m}}'u^{-1} \cap H$.
\end{lemma}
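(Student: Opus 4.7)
My plan is to argue along the same lines as Lemma \ref{closedimmsplit} in the split case. The inclusion $\supset$ is clear from $V_{Np^n, p^{m+1}} \subset V_{Np^n, p^m}'$, so I would focus on the reverse inclusion: given $g_p \in V_{Np^n, p^m}'$ at the prime $p$ with $u g_p u^{-1} \in H(\Z_p)$, I would show that $g_p$ satisfies the diagonal-form condition modulo $p^{m+1}$ that characterises $V_{Np^n, p^{m+1}}$.

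The first step is to pin down the residue of $g_p$ modulo $p^{m+1}$. Using $n \geq 3m+3$ and the presence of $\eta_p^{m+1} V_{Np^n} \eta_p^{-(m+1)}$ in the intersection defining $V_{Np^n, p^m}'$, all entries strictly above the diagonal and the first three entries of the last row already vanish modulo $p^{m+1}$, while the diagonal-form condition modulo $p^m$ forces the subdiagonal entries $g_{21}, g_{31}, g_{32}$ to lie in $p^m\O_{F_p}$. I would therefore parameterise
\[ g_p \equiv \begin{pmatrix} A & 0 & 0 & 0 \\ p^m b & 1 + p^m c & 0 & 0 \\ p^m d & p^m h & D & 0 \\ 0 & 0 & 0 & 1 \end{pmatrix} \pmod{p^{m+1}}, \]
with $A, D \in \O_{F_p}/p^{m+1}$ and $b, c, d, h \in \O_{F_p}/p\O_{F_p} \cong \F_{p^2}$. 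Expanding the unitary relation $\bar g_p^t J g_p = \mu J$ entry by entry on this parameterisation already yields a lot: the $(1,4)$ and $(4,1)$ entries force $A = \bar A \in \Z_p$ with $\mu \equiv A$; the $(2,1)$ and $(1,3)$ entries force $b \equiv d \equiv 0 \pmod{p}$ (using that the reductions of $A, D$ modulo $p$ are units in $\F_{p^2}$); and the $(2,3)$ entry gives $D \equiv A(1 - p^m \bar c) \pmod{p^{m+1}}$. It then remains only to show that $c \equiv 0$ and $h \equiv 0 \pmod{p}$.

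The second step is to exploit the hypothesis $u g_p u^{-1} \in H(\Z_p)$. Writing $N = u - I = e E_{13} + \bar e E_{24}$ and observing $N^2 = 0$, I would use the identity $u g_p u^{-1} = g_p + N g_p - g_p N - N g_p N$ and carry out the matrix multiplication on the parameterisation above. A direct computation gives the four entries
\[ (u g_p u^{-1})_{22} \equiv 1 + p^m c, \qquad (u g_p u^{-1})_{24} \equiv -\bar e \cdot p^m c \pmod{p^{m+1}}, \]
\[ (u g_p u^{-1})_{14} \equiv -e\bar e \cdot p^m h, \qquad (u g_p u^{-1})_{34} \equiv -\bar e \cdot p^m h \pmod{p^{m+1}}. \]
The hypothesis $u g_p u^{-1} \in H(\Z_p)$ means each such entry is fixed by the non-trivial Galois involution of $F_p/\Q_p$. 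Applied to the first pair this gives first $c \in \F_p$ (from $(2,2)$) and then, using $\bar c = c$, the identity $(\bar e - e) c \equiv 0$ (from $(2,4)$); applied to the second pair, $h \in \F_p$ (from $(1,4)$, using $e\bar e \in \Z_p$) and then $(\bar e - e) h \equiv 0$ (from $(3,4)$). The choice of $e$ as a lift of a generator of $\O_F/(\Z + p\O_F) = \F_{p^2}/\F_p$, available precisely because $p$ is inert in $F$, forces $e - \bar e \not\equiv 0 \pmod{p}$, and therefore $c \equiv h \equiv 0 \pmod{p}$.

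Putting everything together, $D \equiv A(1 - p^m \bar c) \equiv A \pmod{p^{m+1}}$ and $g_p \equiv \operatorname{diag}(A, 1, A, 1) \pmod{p^{m+1}}$ with $A \in \Z_p$, which exhibits $g_p \in V_{Np^n, p^{m+1}}$. The main technical obstacle will be the careful bookkeeping involved in computing $u g_p u^{-1}$ modulo $p^{m+1}$: the four entries displayed above are the ones that carry a decisive $\bar e$-factor not already constrained by $g_p \in G(\Z_p)$. Once they are singled out, the non-vanishing of $e - \bar e$ modulo $p$, which is the hallmark of the inert case and plays the role occupied in Lemma \ref{closedimmsplit} by the splitting $\O_{F,p} \simeq \Z_p \times \Z_p$, does the remaining work.
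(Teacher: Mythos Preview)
Your argument is correct and follows essentially the same route as the paper's proof: reduce $g_p$ modulo $p^{m+1}$, compute $ug_pu^{-1}$, and use that membership in $H(\Z_p)$ forces each entry to lie in $\Z_p$, whence the non-vanishing of $e-\bar e$ modulo $p$ kills the offending parameters. The only organisational difference is that you first invoke the unitary relation $\bar g_p^t J g_p=\mu J$ to eliminate the $(2,1)$ and $(3,1)$ entries and to record $D\equiv A(1-p^m\bar c)$ before conjugating, whereas the paper writes down $ug_pu^{-1}$ directly (implicitly using that those lower entries already vanish) and reads off $a_4\equiv 1$, $c_2\equiv 0$, $d_1\equiv a_1$ from the entries $a_4,\,\bar e(1-a_4),\,c_2,\,e c_2,\,d_1,\,e(d_1-a_1)$ all lying in $\Z_p$. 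Your choice of the four entries $(2,2),(2,4),(1,4),(3,4)$ is equivalent and equally efficient.
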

\begin{proof}
 As for Lemma \ref{closedimmsplit}, it suffices to show that if $g=(g_p)_p \in V_{Np^n,p^{m}}'$ is such that $ug_pu^{-1} \in H$, then $g \in V_{Np^n,p^{m+1}}$.  Note that the condition $ug_pu^{-1} \in H(\Z_p)$ is equivalent to asking that $ug_pu^{-1}$ has entries in $\Z_p$. Modulo $p^{m+1}\O_F$, we have \[ ug_pu^{-1} \equiv \left( \begin{smallmatrix} a_1 & e c_2 & e(d_1-a_1) & e\bar{e}c_2 \\  0 & a_4 & 0 & \bar{e}(1-a_4)  \\ 0 &  c_2 & d_1 & \bar{e}c_2 \\ 0 & 0 & 0 & 1 \end{smallmatrix} \right).\] Thus, $ug_pu^{-1} \in H(\Z_p)$ implies  \[a_1-d_1\equiv a_4-1 \equiv c_2 \equiv 0  \text{ (mod }p^{m+1}),\] hence $g \in V_{Np^n,p^{m+1}}$. 
\end{proof}
\begin{lemma}\label{deginert} We have
 \[ [V_{Np^n,p^{m}}:V_{Np^n,p^{m}}']=[uV_{Np^n,p^{m}}u^{-1} \cap H:uV_{Np^n,p^{m}}'u^{-1} \cap H].\]
 \end{lemma}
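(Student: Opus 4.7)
The strategy is to compute both indices explicitly and verify they coincide, mirroring the proof of Lemma \ref{splitdeg}. The essential new feature is that $p$ is now inert in $F$, so $\O_{F_p}$ is an unramified quadratic extension of $\Z_p$ with residue field $\F_{p^2}$, and each free congruence modulo $p^k$ in $\O_{F_p}$ contributes a factor $p^{2k}$ to cardinality counts.

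I would first compute $[V_{Np^n,p^m}:V_{Np^n,p^m}']$. Since the two subgroups agree away from the strictly upper-triangular entries, where the conditions sharpen by one additional conjugation by $\eta_p$, a system of left coset representatives can be chosen to be unipotent upper-triangular matrices
\[ \tau \;=\; \left(\begin{smallmatrix} 1 & p^m\alpha & p^{2m}\beta & p^{3m}\gamma \\ & 1 & p^m\delta & p^{2m}\beta' \\ & & 1 & p^m\alpha' \\ & & & 1 \end{smallmatrix}\right) \]
with entries in $\O_{F_p}$, subject to the unitary relation $\bar{\tau}^t J \tau = J$. Expanding this relation and matching entries forces $\alpha' = -\bar{\alpha}$, $\delta = \bar{\delta}$, $\beta' = \bar{\beta} - \delta\bar{\alpha}$, and $\gamma - \bar{\gamma} = \alpha\bar{\beta} - \bar{\alpha}\beta$. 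The independent parameters are therefore $\alpha \in \O_{F_p}/p\O_{F_p}$, $\beta \in \O_{F_p}/p^2\O_{F_p}$, $\delta \in \Z_p/p\Z_p$, and $\mathrm{Re}(\gamma) \in \Z_p/p^3\Z_p$, giving
\[ [V_{Np^n,p^m}:V_{Np^n,p^m}'] \;=\; p^2 \cdot p^4 \cdot p \cdot p^3 \;=\; p^{10}. \]

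Next, by Lemma \ref{firstinert}, the index on the $H$-side equals $[uV_{Np^n,p^m}u^{-1}\cap H : uV_{Np^n,p^{m+1}}u^{-1}\cap H]$. Following the strategy of Lemma \ref{splitdeg}, I would parametrise candidate coset representatives by matrices $\sigma' \in V_{Np^n,p^m}$ allowing extra slack on the diagonal and strictly lower-triangular entries (the inert analogues of the $s_1, s_2, t$ in the split case), and then impose the reality condition $\overline{u\sigma'u^{-1}} = u\sigma'u^{-1}$ ensuring $u\sigma'u^{-1} \in \GSp_4(\Z_p) = H(\Z_p)$. With $u$ as in Lemma \ref{firstinert}, the vanishing of the imaginary parts of the entries of $u\sigma'u^{-1}$ yields explicit $\Z_p$-linear equations that descend the $\O_{F_p}$-valued parameters $\alpha, \beta, \gamma$ to $\Z_p$-valued parameters and simultaneously pin down the slack parameters uniquely, so that the total count is again $p^{10}$. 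Concretely, the argument reduces to the same style of linear system solved at the end of Lemma \ref{splitdeg}: for each $\sigma \in V_{Np^n,p^m}$ one exhibits a unique $\sigma' \in V_{Np^n,p^m}'$ with $u\sigma\sigma' u^{-1} \in H$.

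The main obstacle is this congruence-solving step, where one must match the imaginary parts of the entries of $u\sigma'u^{-1}$ against the unitary constraints already imposed on $\sigma'$, and verify that the resulting linear system is uniquely solvable in each coset class of $V_{Np^n,p^m}'$. Once the equality $[V_{Np^n,p^m}:V_{Np^n,p^m}'] = [uV_{Np^n,p^m}u^{-1}\cap H : uV_{Np^n,p^m}'u^{-1}\cap H] = p^{10}$ is established, Lemma \ref{deginert} is complete, and Proposition \ref{propcartesian} follows in the inert case by the same Cartesian argument used in the split case.
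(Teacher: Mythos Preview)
Your approach is essentially the same as the paper's: both compute $[V_{Np^n,p^m}:V_{Np^n,p^m}']=p^{10}$ via unipotent upper-triangular representatives constrained by the unitary relation, and then count coset representatives on the $H$-side by introducing slack parameters and imposing the reality condition $u\sigma'u^{-1}\in H(\Z_p)$, arriving again at $p^{10}$. One minor imprecision: your sentence ``for each $\sigma \in V_{Np^n,p^m}$ one exhibits a unique $\sigma' \in V_{Np^n,p^m}'$ with $u\sigma\sigma' u^{-1} \in H$'' is not the right formulation---what you actually need (and what the paper does) is to show that for each $g\in V_{Np^n,p^m}$ with $ugu^{-1}\in H$ there is a unique representative $\sigma'$ from your explicit list with $\sigma' g\in V_{Np^n,p^{m+1}}$.
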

\begin{proof}
As in the split case, we have \[[V_{Np^n,p^{m}}:V_{Np^n,p^{m}}']= p^{10}.\] Indeed, a system of coset representatives is given by \[ \sigma_{\underline{v}} =\left( \begin{smallmatrix} 1 & p^m k_1 & p^{2m} r_1 & p^{3m} r_2  \\ & 1 & p^m r_3 & p^{2m} r_4  \\  &  & 1 & - p^m \bar{k}_1  \\    &  & & 1 \end{smallmatrix} \right) \; : \; \; {\left\{ 
	\begin{array} {l l} 
	r_2- k_1 r_4  \in \Z_p, \\
	 r_4= \bar{r}_1 - \bar{k}_1 r_3,
		\end{array}
	\right.} \]
where for each vector $\underline{v} \in \Z/ p^3\Z \times \O_F/ p^2\O_F \times \O_F/ p\O_F \times \Z/ p\Z$ we consider one and only one lift 
\[(\tilde{r}_2,r_1,k_1,r_3)\in \Z_p \times \O_{F_p}^{2} \times \Z_p,\]  so that $r_2=\tilde{r}_2+k_1( \bar{r}_1 - \bar{k}_1 r_3)$. \\ 
The calculation of $[uV_{Np^n,p^{m}}u^{-1} \cap H:uV_{Np^n,p^{m}}'u^{-1} \cap H]=p^{10}$ is very similar to the one in Lemma \ref{splitdeg}. Here, a system of left coset representatives is formed by elements $u\sigma_{\underline{w}}'u^{-1} \in H$, where
\[\sigma_{\underline{w}}':=\left( \begin{smallmatrix} 1+p^m s_1 & p^m k_1 & p^{2m} r_1 & p^{3m} r_2  \\ & 1+p^m s_2 & p^m r_3 & p^{2m} r_4  \\  & p^mt & 1+ p^m s_3 &  p^m k_2  \\    &  & & 1 \end{smallmatrix} \right) \in V_{Np^n,p^{m}} \] where for each vector
 \[ \underline{w}\in (\Z/ p\Z)^5\times \Z/ p^3 \Z \times  (\O_F/ p\O_F)^2 \times  (\O_F/ p^2\O_F)^{2} \] we consider one and only one lift 
\[(s_1,s_2, s_3, r_3 , t , r_2 , k_1 , k_2 , r_1 , r_4 )\in \Z_p^6 \times \O_{F_p}^{4} \] 
subject to conditions 
\[ 
{\left\{ 
	\begin{array} {l l} 
	s_{1}= s_{2} +  s_3 + p^m( s_2 s_3 - t r_3 ), \\
	\bar{k}_{1} = - k_2 - p^m(k_2 s_2 + p^m t r_4), \\
	\bar{k}_{2}  r_4 \in \Z_p, \\
	\bar{r}_1 =(1+ p^m s_3) - k_2 r_3, \\ 
	k_2-\bar{e}t \in \Z_p, \\ 
	p^m r_4 + \bar{e} s_2 \in \Z_p, \\
	
	\end{array}
	\right. } \]
Looking carefully at the system, we can recover $s_1,s_2,k_1 , k_2,$ and $r_1$ in terms of $ s_3, r_3, t , r_2,$ and $r_4$.
Indeed, $k_2$ is obtained from equations 3 and 5, while the values $s_1,s_2,k_1 ,$ and $r_1$ are determined respectively by equations 1, 6 , 2, and 4.
Thus, a system of left coset representatives is given by $\{ u\sigma_{\underline{w}}'u^{-1}\}_{\underline{w}}$, where $\underline{w}$ ranges through all the vectors in $(\Z/ p\Z)^3 \times \Z/p^3 \Z \times (\O_F/ p^2\O_F)$.



\end{proof}

\bibliographystyle{alpha}

\bibliography{normcompatibleclassesforgu}

\end{document}